\documentclass[11pt,reqno]{amsart}
\usepackage[letterpaper,margin=1in]{geometry}
\usepackage{amsmath,amssymb,amsthm,mathtools}
\usepackage[T1]{fontenc}
\usepackage{lmodern}
\usepackage{microtype}
\usepackage{booktabs,array,enumitem}
\usepackage{flafter}

\usepackage{tikz}
\usetikzlibrary{arrows.meta,calc,decorations.pathreplacing,patterns}
\definecolor{pathblue}{RGB}{40,87,137}
\definecolor{pathorange}{RGB}{187,91,25}
\definecolor{pathgreen}{RGB}{43,119,87}
\definecolor{witnessgold}{RGB}{163,119,24}
\tikzset{
 staticpath/.style={draw=pathblue,line width=1.15pt},
 dynamicpath/.style={draw=pathorange,line width=1.15pt},
 proxypath/.style={draw=pathgreen,line width=1pt,dashed},
 guide/.style={draw=black!35,densely dashed,line width=.45pt},
 every picture/.style={font=\small},
 >=Stealth
}
\usepackage[backend=biber,style=alphabetic,doi=false,maxalphanames=10,maxnames=50]{biblatex}

\usepackage[colorlinks=true,linkcolor=blue,citecolor=blue,urlcolor=blue]{hyperref}

\newcommand{\RR}{\mathbb R}
\newcommand{\ZZ}{\mathbb Z}
\newcommand{\PP}{\mathbb P}
\newcommand{\EE}{\mathbb E}
\newcommand{\cL}{\mathcal L}
\newcommand{\sL}{\mathcal{L}}
\newcommand{\0}{\mathbf{0}}

\newcommand{\cE}{\mathcal E}
\newcommand{\cG}{\mathcal G}
\newcommand{\cI}{\mathcal I}
\newcommand{\cB}{\mathcal B}
\newcommand{\1}{\mathbf 1}
\newcommand{\norm}[1]{\left\lVert#1\right\rVert}
\newcommand{\abs}[1]{\left|#1\right|}
\newcommand{\hor}{\mathrm{hor}}
\newcommand{\wgt}{\mathrm{wgt}}

\DeclareMathOperator{\Leb}{Leb}
\newtheorem{theorem}{Theorem}
\newtheorem{proposition}[theorem]{Proposition}
\newtheorem{lemma}[theorem]{Lemma}
\newtheorem{corollary}[theorem]{Corollary}
\theoremstyle{remark}

\newcommand{\cP}{\mathcal P}

\newcommand{\W}{\mathsf W}
\newcommand{\cH}{\mathcal H}
\newcommand{\cU}{\mathcal U}
\newcommand{\ind}{\mathbf 1}

\hypersetup{
 pdftitle={A non-trivial dynamics on the directed landscape},
 pdfauthor={Manan Bhatia},
 pdfsubject={Critical-time overlap and nontrivial Ornstein--Uhlenbeck dynamics on the directed landscape}
}
\title[A non-trivial dynamics on the directed landscape]
{{A non-trivial dynamics on the directed landscape}}
\author{Manan Bhatia}
\address{Manan Bhatia, Department of Mathematics, University of Geneva, Geneva, Switzerland}
\email{mananbhatia1701@gmail.com}
\date{}

\begin{document}
\begin{abstract}
\begingroup
{We prove the existence of} a non-trivial continuous stationary reversible dynamics
on the directed landscape as a subsequential scaling limit of Brownian
last passage percolation under the Ornstein--Uhlenbeck dynamics.
Strong passage-time stability estimates from the companion paper
\cite{Bha26}, together with static endpoint regularity, give tightness
at the critical dynamical scale $n^{-1/3}$. {To establish
non-triviality of the subsequential limiting dynamics, we show that,
in dynamical BLPP, the fractional overlap between geodesics
at times zero and $a n^{-1/3}$ tends to one as first $n\to\infty$
and then $a\downarrow0$. The proof refines the excursion argument of
Ganguly and Hammond \cite{GH24}, using strong passage-time stability
and the fact that a fixed dynamical time is increasingly subcritical
at finer spatial scales.}
\par\endgroup
\end{abstract}

\maketitle
\setcounter{tocdepth}{1}
\tableofcontents
\enlargethispage{2pt}

\section{Introduction}\label{sec:introduction}
\begingroup
Last passage percolation (LPP) is a model of random geometry
{in which directed paths attempt to maximise the weight they collect from}
a random environment. {In lattice LPP, for example,
independent identically distributed weights are assigned to the
vertices of $\mathbb Z^2$;}
the passage time between ordered vertices is the largest total
weight of an up-right path, and a {maximising} path is called a
geodesic. {Under mild restrictions on the vertex weight distribution, LPP models are believed to belong to the
Kardar--Parisi--Zhang universality class \cite{KPZ86}, with
characteristic exponents $1/3$ for passage-time fluctuations
and $2/3$ for transversal fluctuations {of geodesics}. The directed landscape,
constructed by Dauvergne, Ortmann and Vir\'ag \cite{DOV22}, is
the full scaling limit of the passage-time field for several
integrable LPP models \cite{DOV22,DV21}, and is conjectured to
arise from a broad class of first and last passage percolation models.}

{In the static model, the environment is sampled once and
then kept fixed. One may instead let the weights evolve randomly,
preserving their distribution at each fixed time, to obtain a
dynamical version of LPP.}
The passage-time field and its geodesics then evolve as well.
It is natural to ask whether this evolution has a scaling
limit: can one obtain a continuous process whose state at each
deterministic time is a directed landscape, and whose states
at different times are not all identical?

{In this paper we prove the existence of such a process
as a subsequential scaling limit of dynamical Brownian LPP.}
The environment consists of independent two-sided Brownian
motions indexed by integer rows, and paths collect Brownian
increments along their horizontal pieces. Each Brownian motion
evolves under stationary Ornstein--Uhlenbeck (OU) dynamics,
with correlation $e^{-|t-s|}$ between the same increment at
dynamical times $s,t$.
\begingroup
{Previous work \cite{GH24} identifies $n^{-1/3}$, up to
subpolynomial corrections, as the transition scale from overlap
of order $n$ to sublinear overlap between geodesics of horizontal
length $n$. Here, overlap refers to the total horizontal length shared
by geodesics with the same endpoints at dynamical times zero and
$t$. We remove the subpolynomial correction from the stability
{estimate, thereby establishing that overlap remains close to its full
length at time $a n^{-1/3}$ when the fixed coefficient $a$ is small.
This identifies $n^{-1/3}$ itself as the critical scale for geodesic
overlap, without a subpolynomial correction.}}
\par\endgroup

\begingroup
{This critical-scale overlap estimate, together with the
strong passage-time stability estimates of the companion paper
\cite{Bha26}, yields non-trivial limiting dynamics. Indeed, the
stability estimates and the static endpoint regularity of
\cite{DOV22} readily give tightness jointly in dynamical time and
the four endpoint coordinates. These upper bounds alone, however,
do not exclude a} limit that is constant in dynamical time: the true scale for a
change of order $n^{1/3}$ could still be larger by a diverging
subpolynomial factor. Our overlap bound rules this out through
the covariance--overlap identity of \cite{Cha14,GH24}, which
expresses the increment variance as an integral of expected overlap.
Overlap of order $n$ over a time interval of order $n^{-1/3}$
therefore gives an increment variance of order $n^{2/3}$.
Thus, even while the geodesics share most of their horizontal
length, the evolving weights produce passage-value fluctuations
on the static scale. {Every subsequential limit is continuous,
stationary and reversible.}
\par\endgroup

\begingroup
\begingroup
Several questions about these subsequential limiting dynamics remain open.
\begin{enumerate}[label=(\arabic*),leftmargin=*]
\item Are the subsequential limits unique, and are they Markov?
Neither of these properties is established here.
\item Can the dynamics be constructed directly from geometric
information in the directed landscape and additional randomness,
in analogy with continuum dynamical percolation and its pivotal
measures~\cite{GPS18}? Such a description could provide an intrinsic
{characterisation} and a route to uniqueness.
\item Given the intimate relationship \cite{DOV22} between the directed
landscape and the Airy line ensemble~\cite{CH14}, is there a natural description of the landscape dynamics on the involved ensembles?
\item Do the fields at widely separated dynamical times become
{independent? More precisely, with $\cL^t$ denoting the landscape at time
$t$ in a subsequential limiting dynamics, does $(\cL^0,\cL^t)$ converge
in law, as $t\to\infty$, to a pair of independent directed landscapes?
This is a continuum counterpart of the question of passage-time
decorrelation in BLPP at supercritical dynamical times.}
\end{enumerate}
\par\endgroup
\par\endgroup
\par\endgroup

\subsection{\texorpdfstring{{Dynamics and noise sensitivity}}{Dynamics and noise sensitivity}}\label{sec:background}

\begingroup
{The transition around the scale $n^{-1/3}$ gives a quantitative
example of noise sensitivity, in the sense that a perturbation acting
for a vanishing amount of time can substantially change a macroscopic
geodesic.}
More generally, the concept of noise sensitivity, first developed in \cite{BKS99} for Boolean functions and percolation, concerns the response of macroscopic
observables to small random perturbations.

{Critical site percolation on the triangular lattice is a
well-studied model whose natural resampling dynamics exhibits
noise sensitivity.} Although there is almost surely no infinite open
cluster at any fixed time, independent resampling of the sites
produces exceptional times at which such a cluster exists
\cite{SS10}; the set of these times has Hausdorff dimension $31/36$
\cite{GPS10}. {The full static scaling limit was
constructed in~\cite{CN06}.} At the level of the scaling limit, \cite{GPS13}
constructed pivotal measures that are measurable functions of the
static continuum configuration. These measures and additional
Poisson randomness were then used to construct continuum dynamical
percolation, prove convergence of the rescaled discrete dynamics,
and establish the Markov property~\cite{GPS18} of the continuum dynamics.
The static percolation scaling limit was also proved in~\cite{SS11}
to be a two-dimensional black noise, in the sense of Tsirelson and
Vershik~\cite{TV98}.

\begingroup
For Gaussian optimisation problems, Chatterjee \cite{Cha14}
related superconcentration of the optimal value to instability of
the optimiser under dynamics, with applications to last passage
percolation and directed polymers, including positive-temperature
models. For BLPP, \cite{GH24} then identified
the overlap transition at $n^{-1/3}$ up to subpolynomial corrections,
as described above. {Further results on stability, chaos and noise
sensitivity in lattice LPP appear in \cite{ADS24,AHT26}.
For related work on exceptional times in critical first-passage
percolation, see \cite{DHHL23,DHHL26}.}
\par\endgroup

{Extending the one-dimensional black-noise result of \cite{HP24},
the recent work \cite{GGH26} proves that the directed landscape is
a two-dimensional black noise, thereby giving a decomposition into
independent local randomness associated with space--time rectangles.
This black-noise property can be viewed as intrinsic noise sensitivity
of the continuum field: if the randomness in each rectangle is
independently resampled with any fixed positive probability, then the
covariance of a square-integrable observable evaluated in the original
and perturbed landscapes tends to zero as the rectangular mesh is
refined; see \cite[Definition XII.61]{GS15} for this interpretation.}
This statement concerns the static continuum field; it does not
require a limiting dynamics to have been defined.

{Motivated by the appearance of infinite open clusters at
exceptional times in dynamical critical percolation, a related
line of work asks whether non-trivial bigeodesics can appear at
exceptional times in dynamical LPP.} A bigeodesic is a
bi-infinite directed path every finite portion of which is a geodesic;
here the entirely horizontal and vertical paths are excluded.
For dynamical exponential LPP, \cite{BE25} gives a lower bound
$c/\log n$ for the probability that, at some time in $[0,1]$, a
geodesic joining suitable opposite segments at distance of order $n$
passes through the origin. This is a finite-scale near-existence
result. For Brownian LPP under discrete resampling,
\cite{Bha25} bounds the number of geodesic switches and obtains
an upper bound of $1/2$ on the Hausdorff dimension of the set of
times admitting non-trivial bigeodesics. The companion paper~\cite{Bha26}
improves this bound to zero by controlling the trace swept out by
geodesics, and also supplies the passage-time stability estimates
used here. Whether non-trivial bigeodesics actually occur at
exceptional times {remains an open question.}
\par\endgroup

\begingroup

\subsection{The model}\label{sec:model}

\begingroup
{We briefly recall Brownian LPP and its OU dynamics.}
For each integer row $i$, we evolve a two-sided standard
Brownian motion according to stationary Ornstein--Uhlenbeck dynamics,
independently for different rows. Write $B_i^t(x)$ for its value at
horizontal coordinate $x$ and dynamical time $t$. More precisely,
$(B_i^t(x))_{i\in\ZZ,\,t,x\in\RR}$ is the {centered} Gaussian field
with covariance
\[
 \EE\bigl[B_i^s(x)B_k^t(y)\bigr]
 =\1_{\{i=k\}}e^{-|t-s|}
       \frac{|x|+|y|-|x-y|}{2}.
\]
At each fixed dynamical time, the processes $B_i^t$ are independent
two-sided standard Brownian motions, pinned to zero at the origin, and corresponding increments at dynamical times $s,t$ have correlation
$e^{-|t-s|}$.
\par\endgroup
For {$u=(x,i),v=(y,k)\in\RR\times\ZZ$} with $x\le y$ and $i\le k$, a
\emph{staircase} from $u$ to $v$ consists of horizontal segments
on the rows $i,\ldots,k$, joined by upward vertical segments.
It is specified by a nondecreasing sequence
$x=z_i\le z_{i+1}\le\cdots\le z_{k+1}=y$: its segment on
row $\ell$ is $[z_\ell,z_{\ell+1}]\times\{\ell\}$.
{We write $\pi(m)=z_{m+1}$ for its departure coordinate from
row $m\in\{i,\ldots,k\}$, and call $k-i$ its row duration.}
{We regard a staircase as a planar curve, including its vertical
segments. In statements about BLPP passage values or subpath
weights, the endpoints are always required to lie in $\RR\times\ZZ$.}
Its weight and the last passage value at time $t$ are
\[
 \wgt^t(\pi)=\sum_{\ell=i}^k
       \bigl(B_\ell^t(z_{\ell+1})-B_\ell^t(z_\ell)\bigr),
 \qquad T_u^{v,t}=\max_{\pi:u\to v}\wgt^t(\pi).
\]
{The nondecreasing sequences specifying these staircases
form a compact simplex, and their weights are continuous functions
of the coordinates. As a consequence, the maximum is always attained. A {maximising}
staircase is called a geodesic and is denoted by $\Gamma_u^{v,t}$.
For deterministic endpoints $u,v$ and a fixed deterministic dynamical
time $t$, it is almost surely unique; see \cite[Lemma B.1]{Ham19}.}
\begingroup
For $\mathbf{0}=(0,0)$ and $\mathbf{n}=(n,n)$, at each fixed
deterministic time $t$ we use the shorthand
\[
 \Gamma^t=\Gamma_{\mathbf{0}}^{\mathbf{n},t}.
\]
Its total horizontal length, meaning the sum of the Euclidean
lengths of its horizontal segments, is $n$.
\par\endgroup
We measure the overlap of the time-zero and time-\(t\)
geodesics by
\begin{equation}\label{st:eq:overlap}
 O_n(t)=|\Gamma^0\cap\Gamma^t|_{\hor}
       =\sum_{i=0}^{n}
         \Leb\{x:(x,i)\in\Gamma^0\cap\Gamma^t\}.
\end{equation}
In particular, \(0\le O_n(t)\le n\). {This is the BLPP
overlap used in~\cite{GH24}, corresponding to the overlap observable
in Chatterjee's Gaussian framework~\cite{Cha14}.}
\par\endgroup

\begingroup
\subsection{A non-trivial dynamics on the directed landscape}\label{sec:oumain}
\begingroup
The directed landscape \cite{DOV22} is a random continuous
function $\cL:\RR^4_\uparrow\to\RR$, where
\[
 \RR^4_\uparrow=\{(x,s;y,t)\in\RR^4:s<t\}.
\]
Its value $\cL(x,s;y,t)$ is the continuum passage value from
$(x,s)$ to $(y,t)$. A geodesic between these points is a continuous
path $\gamma:[s,t]\to\RR$, with $\gamma(s)=x$ and $\gamma(t)=y$,
such that, for every finite partition $s=r_0<\cdots<r_k=t$,
\[
 \sum_{i=1}^k
 \cL\bigl(\gamma(r_{i-1}),r_{i-1};\gamma(r_i),r_i\bigr)
 =\cL(x,s;y,t).
\]
We refer to \cite{DOV22} for the construction and {characterisation}
of its law.

{We now specify the rescaling from BLPP to the directed landscape,
keeping the longitudinal coordinates $s,t$ distinct from the
rescaled dynamical time $\tau$.} Set
\[
 p_n(x,s)=(ns+2n^{2/3}x,ns),\qquad s\in n^{-1}\ZZ.
\]
\par\endgroup
For $z=(x,s;y,t)\in\RR^4_\uparrow$ with $s,t\in n^{-1}\ZZ$,
define
\begin{equation}\label{eq:ourescaling}
 \sL_n^\tau(x,s;y,t)
 =n^{-1/3}\left(
 T_{p_n(x,s)}^{p_n(y,t),\,n^{-1/3}\tau}
       -2n(t-s)-2n^{2/3}(y-x)\right).
\end{equation}
Let $\widehat{\sL}_n^\tau$ be the continuous interpolation that
is linear separately in $s$ and $t$ between successive points of
$n^{-1}\ZZ$, with $x,y$ fixed. On every fixed compact subset of
$\RR^4_\uparrow$, all endpoint pairs used in this interpolation
are ordered for sufficiently large $n$. {For each fixed
deterministic $\tau$, the static scaling-limit theorem
\cite[Theorem 11.1]{DOV22} gives convergence of
$\widehat{\sL}_n^\tau$ in law, locally uniformly in the endpoint
coordinates, to the directed landscape.} {We now state the main result of the paper.}

\begin{theorem}[Nontrivial subsequential dynamics]\label{cor:nontrivial}
There are constants $a_0,c>0$ with the following properties.
The family $(\widehat{\sL}_n^\tau)_{\tau\in\RR}$ is tight in
the local uniform topology. Every subsequential limit
$(\cL^\tau)_{\tau\in\RR}$ is continuous, stationary and
reversible in dynamical time, and for each fixed
$\tau$, the field $\cL^\tau$ has the law of the directed
landscape. Moreover, for every $\tau\in\RR$ and $0<h\le a_0$,
\begin{equation}\label{eq:limitlower}
 ch\le
 \EE\!\left[
  \bigl(\cL^{\tau+h}(0,0;0,1)-\cL^\tau(0,0;0,1)\bigr)^2
 \right]\le2h.
\end{equation}
In particular, the law of the limiting process is not
supported on paths constant in dynamical time.
\end{theorem}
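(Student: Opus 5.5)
The proof has three parts: tightness, structural properties of the limits, and the two-sided variance bound \eqref{eq:limitlower}.

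\emph{Tightness.} I would use a Kolmogorov--Chentsov type criterion jointly in $(\tau,z)$ on compact sets $[-M,M]\times K$, $K\subset\RR^4_\uparrow$ compact. For the endpoint coordinates at fixed $\tau$, the static modulus-of-continuity estimates behind \cite[Theorem 11.1]{DOV22} apply, since each $\widehat\sL_n^\tau$ is exactly static rescaled BLPP; they hold uniformly in $\tau$ by stationarity. For the dynamical coordinate, I would invoke the strong passage-time stability estimate of \cite{Bha26}. After rescaling, this should give tail bounds of the form
\[
\PP\bigl(|\sL_n^{\tau+h}(z)-\sL_n^\tau(z)|>\lambda h^{1/2}\bigr)\le Ce^{-c\lambda^{\alpha}},
\]
uniformly in $z\in K$ and in $n$. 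Combining these two moduli through a chaining argument on a dyadic grid in $(\tau,z)$ gives tightness in the local uniform topology, and every limit point is continuous. Stationarity and reversibility pass to the limit because the OU environment is stationary and time-reversible, so $(\widehat\sL_n^{\tau})_\tau$ and its time shifts and reversals have the same finite-dimensional laws. The fixed-$\tau$ marginal is the directed landscape by \cite{DOV22}.

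\emph{Upper bound in \eqref{eq:limitlower}.} Write $X_n(\tau)=n^{-1/3}T^{n^{-1/3}\tau}_{\mathbf 0}{}^{\mathbf n}$. I would use the covariance--overlap identity of \cite{Cha14,GH24}:
\[
\EE\bigl[(X_n(h)-X_n(0))^2\bigr]=2\bigl(\mathrm{Var}\,X_n(0)-\mathrm{Cov}(X_n(0),X_n(h))\bigr)=2n^{-2/3}\int_0^{n^{-1/3}h}e^{-s}\,\EE[O_n(s)]\,ds .
\]
Since $O_n\le n$, the right-hand side is at most $2n^{-2/3}\cdot n\cdot n^{-1/3}h=2h$. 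To pass this bound to the limit I need uniform integrability of the squared increments. This follows either from the static upper and lower tail bounds for BLPP passage times, which give uniform moments of $X_n(\tau)-2n^{2/3}$ at each $\tau$ separately, or from the stability tails above. Hence the limit satisfies the upper bound $2h$. The same uniform integrability gives convergence of the second moments, so the lower bound also transfers to the limit.

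\emph{Lower bound in \eqref{eq:limitlower}.} The same identity gives
\[
\EE\bigl[(X_n(h)-X_n(0))^2\bigr]\ge 2e^{-1}n^{-2/3}\int_0^{n^{-1/3}h}\EE[O_n(s)]\,ds,
\]
where I used $e^{-s}\ge e^{-1}$ on the range of integration. So it suffices to show $\EE[O_n(s)]\ge n/2$ for all $s\le a_0n^{-1/3}$ and all $n$ large. This is precisely the critical-scale overlap estimate announced in the introduction: $O_n(an^{-1/3})/n\to1$ as first $n\to\infty$ and then $a\downarrow0$. I would need it in a form uniform over $s\in[0,a_0n^{-1/3}]$. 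That uniformity should follow from monotonicity of $\EE[O_n(s)]$ in $s$, which comes from the positive-definite spectral representation in Chatterjee's framework, or directly from a uniform version of the overlap theorem. With this, the lower bound holds with $c=e^{-1}$, and taking limits gives the bound for $\cL$. Non-constancy of the limiting process is then immediate, since the increment variance is positive.

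The main obstacle is the overlap estimate itself. I would take it as the paper's central proposition, proved by the refined Ganguly--Hammond excursion argument combined with the strong stability estimates, and apply it here as a black box.
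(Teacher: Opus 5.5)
Your proposal is correct and follows essentially the paper's route. Tightness comes from the \cite{Bha26} dynamical moment bounds combined with the static endpoint moments of \cite{DOV22} via Kolmogorov--Chentsov. The two-sided bound comes from the covariance--overlap identity, monotonicity of $\EE O_n(s)$, the overlap theorem taken as a black box, and uniform integrability. The differences are cosmetic: your constant $c=e^{-1}$ is cruder, and you offer to get uniform integrability from static one-point tails instead of the $p=4$ stability bound. Both of these work.
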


Here, by reversibility, we mean that $(\cL^\tau)_{\tau\in\RR}$ and
$(\cL^{-\tau})_{\tau\in\RR}$ have the same law.
The bounds in \eqref{eq:limitlower} describe the order of
the mean-square change throughout a small interval of
rescaled dynamical times, uniformly over all subsequential
limits.

{Tightness follows readily from the OU increment bounds
of \cite[Theorem 5]{Bha26} and the static endpoint estimates of
\cite{DOV22}; Section~\ref{sec:outightness} gives the details and
also verifies stationarity, reversibility and the marginal statement.
The main work of this paper is the overlap theorem below.
Combined with the covariance--overlap identity and uniform
integrability, it yields the increment bounds in
Section~\ref{sec:nontriviality}.}
{Note that we do not establish uniqueness or the Markov property for
the limiting dynamics. Although the driving OU environment in BLPP is Markov,
this property for the limiting dynamics on the directed landscape does not follow from weak convergence alone.}

\subsection{Overlap at the critical time scale}

{The geometric input for non-triviality of the limiting
dynamics is the following estimate.}

\begin{theorem}[Overlap at the critical time scale]\label{thm:main}
For every fixed $\kappa_*\in(0,1)$,
\begin{equation}\label{eq:mainprob}
 \lim_{a\downarrow0}\limsup_{n\to\infty}
 \PP\bigl(O_n(a n^{-1/3})<\kappa_*n\bigr)=0.
\end{equation}
\end{theorem}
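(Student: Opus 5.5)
We will follow the excursion strategy of \cite{GH24}, but feed it the strong passage-time stability estimates of \cite[Theorem 5]{Bha26} in place of the versions with subpolynomial losses. Set $t=an^{-1/3}$. The event $\{O_n(t)<\kappa_*n\}$ means that $\Gamma^t$ spends horizontal length at least $(1-\kappa_*)n$ off $\Gamma^0$. Decompose $\Gamma^t$ into maximal excursions away from $\Gamma^0$. Each excursion is a sub-staircase whose endpoints lie on $\Gamma^0$, and it is the time-$t$ geodesic between those endpoints. It is also disjoint from the time-zero geodesic between the same endpoints, up to the shared endpoints. Classify excursions by dyadic scale $\ell=2^j$, the horizontal length of the excursion. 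Then there is some scale $j$ at which excursions of length $\approx 2^j$ carry total length at least $c_j(1-\kappa_*)n$, where the weights $c_j$ are summable, for instance $c_j\propto (n/2^j)^{-\delta}+2^{-\delta j}$.

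The key local input is a single-excursion estimate. At spatial scale $\ell$, the dynamical time $t=an^{-1/3}$ corresponds to $a(\ell/n)^{1/3}\ell^{-1/3}$ in $\ell$-critical units, so it is subcritical by the factor $a(\ell/n)^{1/3}\le a$. On an excursion of length $\ell$ with endpoints $u,v$, two inequalities hold. At time zero, the disjoint competitor path $\Gamma^t|_{[u,v]}$ loses weight relative to $\Gamma^0|_{[u,v]}$. At time $t$ the reverse holds. Adding the two,
\[
 \bigl(\wgt^t-\wgt^0\bigr)(\Gamma^t|_{[u,v]})-\bigl(\wgt^t-\wgt^0\bigr)(\Gamma^0|_{[u,v]})\ \ge\ \text{(sum of the two weight gaps)}\ \ge 0.
\]
Two disjoint near-geodesics of length $\ell$ typically have a weight gap of order $\ell^{1/3}$; this is the static ``no near-geodesic disjoint competitor'' estimate, in the form used in \cite{GH24}. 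The left side is controlled by the strong OU stability estimate: the increment of passage/path weight over time $t$ at scale $\ell$ is of order $(t\ell)^{1/2}=a^{1/2}\ell^{1/2}n^{-1/6}\le a^{1/2}\ell^{1/3}$, with Gaussian-type tails and no $n^{o(1)}$ loss. Hence a given location is the start of an excursion of scale $\ell$ with probability at most $\varepsilon(a)\,(\ell/n)^{\theta}$-improved bounds, where $\varepsilon(a)\to0$ as $a\downarrow0$. The extra factor comes from the scale being strictly subcritical when $\ell\ll n$.

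We then sum over scales. Cover $[0,n]$ by a grid of $\approx n/\ell$ boxes of width $\ell$ and height $\ell^{2/3}$ around $\Gamma^0$, using static transversal fluctuation bounds. Bound the expected total length of scale-$\ell$ excursions by $n$ times the single-box probability. A union bound over endpoint locations is needed, handled via uniform-in-box versions of the stability and disjoint-competitor estimates. This gives $\EE[\text{length off }\Gamma^0\text{ at scale }\ell]\le n\,\varepsilon(a)\,g(\ell/n)$ with $\sum_j g(2^j/n)$ bounded uniformly in $n$. Excursions at microscopic scales $\ell\le n^{\eta}$ need separate treatment. At such scales the time is extremely subcritical, $t\ell^{1/3}\le an^{-1/3+\eta/3}\to0$, so a direct estimate from the same inequality applies. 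Markov's inequality then gives $\limsup_n\PP(n-O_n(t)>(1-\kappa_*)n)\le C\varepsilon(a)/(1-\kappa_*)\to0$.

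The main obstacle is the single-excursion bound made uniform over random excursion endpoints on $\Gamma^0$ with a gain summable over scales. The disjoint-competitor weight-gap lower bound must hold simultaneously for all endpoint pairs in a box, with probability close to one. The stability estimate of \cite{Bha26} must also be applied to the random subpaths $\Gamma^0|_{[u,v]}$, not just to passage values. I would first try to bound the weight change of a geodesic subpath by the passage-time increments between its endpoints, together with a sup over endpoints in the box. This is where the subpolynomial losses entered in \cite{GH24}, and where removing them is essential.
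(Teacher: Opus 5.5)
There is a genuine gap in your key single-excursion estimate. The displayed inequality is actually an identity. With $g_0=T_u^{v,0}-\wgt^0(\Gamma^t|_{[u,v]})$ and $g_t=T_u^{v,t}-\wgt^t(\Gamma^0|_{[u,v]})$, the left side equals $g_0+g_t$. Saying the left side is small is therefore the assertion that the time-$t$ excursion is a near-geodesic in the time-$0$ environment, which is what has to be proved.

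The estimate of \cite[Theorem 5]{Bha26} (Lemma~\ref{lem:rotation}) controls increments of passage values, i.e.\ of re-optimised maxima, not the weight of a fixed random path at two times. Your proposed remedy cannot work either. Comparing with passage values only gives $\wgt^t(\Gamma^t|_{[u,v]})-\wgt^0(\Gamma^t|_{[u,v]})\ge T_u^{v,t}-T_u^{v,0}$ and $\wgt^t(\Gamma^0|_{[u,v]})-\wgt^0(\Gamma^0|_{[u,v]})\le T_u^{v,t}-T_u^{v,0}$, both in the wrong direction for an upper bound.

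The heuristic size $(t\ell)^{1/2}$ is also wrong for optimised paths. Write $B^0=e^{-t}B^t+\sqrt{1-e^{-2t}}\,\widetilde B$ with $\widetilde B$ independent of $B^t$. For deterministic endpoints $u=(0,0)$, $v=(\ell,\ell)$ this gives $\EE\bigl[T_u^{v,0}-\wgt^0(\Gamma_u^{v,t})\bigr]=(1-e^{-t})\EE T_u^{v,0}\approx2t\ell$. At $t=an^{-1/3}$ this exceeds $\ell^{1/3}$ once $\ell\gg a^{-3/2}n^{1/2}$. So the static weight of a time-$t$ geodesic carries an order-$t\ell$ deficit from microscopic re-optimisation. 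The contradiction between a small left side and an $\ell^{1/3}$ disjoint-competitor gap therefore cannot close at large scales. For the same reason, the inequality cannot simply be reused at microscopic scales.

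The missing idea is to extract from the dynamics only statements about passage values. Take an interior row $m$ of an excursion at scale $N_j$, with $x=\Gamma^t(m)$. The three values $T_u^{v}$, $T_u^{(x,m)}$ and $T_{(x,m+1)}^{v}$ are additive at time $t$. On a single event, uniform over the relevant endpoint pairs on $\Gamma^t$ at all scales, each changes by at most $A_j=C\tau_j^{1/4}N_j^{1/3}$. The paper gets this from chaining with $p\asymp\tau_j^{-3/16}$ and a union over deterministic endpoint boxes. Attaching the static prefix and suffix then shows that $x$ is a $3A_j$-near-maximiser of the static routed profile $Z_m$ of the full problem. The static weight of the excursion never enters.

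This is information at one row only, so it is useful only where $|x-\Gamma^0(m)|\ge w_j=\tau_j^\nu N_j^{2/3}$. There the twin-peak bound $\PP(\cP_m(A,w))\le C(A/\sqrt w)^{1/2}$ holds up to a negligible error. Since $3A_j/\sqrt{w_j}\asymp\tau_j^{1/4-\nu/2}$ with $\nu<1/2$, it can be summed over witness rows and scales.

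Excursions that stay within $w_j$ of $\Gamma^0$ on most rows need a separate mechanism. Your single disjoint-competitor estimate does not supply one in a usable quantitative form. The paper compares the weight lower bound for geodesic segments with the narrow-tube penalty of order $\theta_j^{-1}N_j^{1/3}$, where $\theta_j=\tau_j^\nu$. This penalty is transferred to the random reference curve $\Gamma^0$ by the FKG comparison conditional on $\Gamma^0$.

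Short excursions are handled by importing the analysis of \cite{GH24} and converting durations to horizontal lengths. There, thin excursions lose at most $2\delta$ of horizontal length per row.
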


In \cite[Theorem 3.1(1)]{GH24}, a fixed positive overlap
fraction is obtained for times at most
$n^{-1/3}\exp\{-h(\log\log n)^{68}\}$, with $h>0$ chosen
according to the desired error bound. Here $a>0$ is held fixed
as $n\to\infty$, and the overlap fraction can be taken arbitrarily
close to one by subsequently sending $a\downarrow0$. In particular,
since $0\le O_n(t)/n\le1$, our conclusion also gives
\begin{equation}\label{eq:meanfull}
 \lim_{a\downarrow0}\liminf_{n\to\infty}
             \frac{\EE O_n(a n^{-1/3})}{n}=1.
\end{equation}

\par\endgroup

\subsection{Outline of the proof}\label{sec:outline}
\begingroup
To prove Theorem~\ref{cor:nontrivial}, we first obtain subsequential
limiting passage-time fields, and then show that they actually
change in dynamical time. {For the first task, consider, for $\tau\ge0$,}
\[
 {X_n(\tau)=n^{-1/3}\bigl(
 T_{\mathbf{0}}^{\mathbf{n},\tau n^{-1/3}}
       -T_{\mathbf{0}}^{\mathbf{n},0}\bigr).}
\]
The strong passage-time stability estimate of
\cite[Theorem 5]{Bha26}, imported as Lemma~\ref{lem:rotation}, gives
\begin{equation}\label{eq:outlinemoment}
 {\|X_n(\tau)\|_p\le C\sqrt{p\tau},\qquad p\ge2.}
\end{equation}
Together with static endpoint regularity, this gives a modulus of
continuity jointly in dynamical time and all four endpoint coordinates.
Section~\ref{sec:outightness} proves tightness and identifies the
fixed-time marginals as directed landscapes.

Tightness alone would still allow a limiting process that samples
one landscape and then keeps it fixed. To rule this out, we use the
covariance--overlap identity from Chatterjee's Gaussian framework
\cite{Cha14}, in its BLPP form
{\cite[Section 2.3, ``Subcritical energy stability'']{GH24}}:
\[
 {\EE X_n(\tau)^2
 =2\int_0^\tau e^{-u n^{-1/3}}
          \frac{\EE O_n(u n^{-1/3})}{n}\,du.}
\]
The expected overlap $t\mapsto\EE O_n(t)$ is non-increasing (see \cite[Theorem 4.1(2)]{GH24}).
Moreover, \eqref{eq:outlinemoment} with $p=4$ makes
{$\{X_n(\tau)^2:n\ge1\}$ uniformly integrable for each fixed $\tau$.}
Thus it suffices to show that
\[
 \liminf_{n\to\infty}\frac{\EE O_n(a_*n^{-1/3})}{n}>0
 \qquad\text{for some fixed }a_*>0.
\]
Indeed, the monotonicity of expected overlap bounds the integrand
{from below throughout $[0,\tau]$ whenever $0<\tau\le a_*$, and uniform}
integrability passes the resulting variance lower bound to the limit.
The main geometric task is to establish this overlap bound.
Theorem~\ref{thm:main} gives the stronger conclusion that the
overlap fraction tends to one as $a\downarrow0$.

The overlap stability theorem of \cite{GH24} requires $a$ to
decrease with $n$, whereas here it must be small but fixed
independently of $n$. We follow its broad excursion strategy,
but use local weight comparisons and sum expected durations over
all excursion scales. {We first explain this distinction, and then
describe the three main steps in detail.}

An excursion is a part of $\Gamma^t$ that leaves $\Gamma^0$ and
returns without meeting it in between. Its duration is the number
of rows between its endpoints. Fix a small $\beta>0$, and write
$D_{\rm long}$ for the sum of durations of excursions lasting at
least $n^\beta$ rows. The short-excursion argument in
\cite[Proposition 8.4 and its proof]{GH24}, with the horizontal-length
bookkeeping in Lemma~\ref{lem:fullreduction}, reduces our task to
\[
 \lim_{a\downarrow0}\limsup_{n\to\infty}
            \frac{\EE D_{\rm long}(a n^{-1/3})}{n}=0.
\]

In \cite[Sections 3.3.3 and 9.2]{GH24}, one selects a dyadic scale
contributing at least order $n/\log n$ rows when the total excursion
duration is linear in $n$. A time-zero \emph{proxy} is then built:
this is a path between the original endpoints which preserves enough
of the excursion structure of $\Gamma^t$ at the \emph{chosen} scale,
and whose time-zero weight approximates the time-$t$ weight of
$\Gamma^t$. Static near-peak estimates say that well-separated
competing crossings are unlikely both to give nearly optimal weight.
They force a cumulative deficit for the retained excursions, which
is compared with the error in approximating the weight of the whole
path by the proxy.

The distinction between these two quantities matters when missing
overlap is spread over many scales. The deficit then detects only
the small fraction contributed by the selected scale, whereas the
approximation error accumulates along the whole path. To detect
this smaller signal, the latter error must be made correspondingly
smaller. Their theorem takes $a=\exp\{-h(\log\log n)^{68}\}$;
other inputs also contribute to this loss, but the whole-path
comparison is an important place where it enters.
Figure~\ref{fig:proxycomparison} illustrates the distinction.

\begin{figure}[tbp]
\centering
\begin{tikzpicture}[x=.94cm,y=.68cm]
 \begin{scope}
  \node[anchor=west] at (0,6.8) {\textbf{(a)} A full-path proxy};
  \fill[pathgreen!7] (.8,.6) rectangle (3.55,3.15);
  \draw[staticpath] (1.5,0)--(1.5,5.8);
  \draw[dynamicpath] (1.5,0)--(1.5,.6)
   .. controls (3.5,1.0) and (3.5,2.75) .. (1.5,3.15)
   --(1.5,3.55)
   .. controls (2.6,3.75) and (2.6,4.4) .. (1.5,4.6)
   --(1.5,4.95)
   .. controls (.7,5.05) and (.7,5.3) .. (1.5,5.4)
   --(1.5,5.8);
  \draw[proxypath] (1.5,0)--(1.42,.6)
   .. controls (3.1,1.2) and (3.1,2.5) .. (1.42,3.15)
   --(1.42,5.55)--(1.5,5.8);
  \draw[decorate,decoration={brace,amplitude=4pt},draw=pathgreen]
    (3.75,3.15)--(3.75,.6)
    node[midway,right=6pt,align=left,font=\footnotesize]
    {selected\\scale};
  \draw[->,draw=pathgreen] (3.5,4.85)--(1.47,4.85);
  \node[anchor=west,align=left,font=\footnotesize] at (3.5,4.85)
    {proxy};
  \node[align=center,font=\footnotesize] at (2.5,-.65)
    {selected-scale deficit\\versus whole-path error};
 \end{scope}
 \begin{scope}[xshift=6cm]
  \node[anchor=west] at (0,6.8) {\textbf{(b)} Local comparisons};
  \foreach \lo/\hi in {.6/3.15,3.55/4.6,4.95/5.4}
   \fill[pathgreen!7] (.8,\lo) rectangle (3.55,\hi);
  \draw[staticpath] (1.5,0)--(1.5,5.8);
  \draw[dynamicpath] (1.5,0)--(1.5,.6)
   .. controls (3.5,1.0) and (3.5,2.75) .. (1.5,3.15)
   --(1.5,3.55)
   .. controls (2.6,3.75) and (2.6,4.4) .. (1.5,4.6)
   --(1.5,4.95)
   .. controls (.7,5.05) and (.7,5.3) .. (1.5,5.4)
   --(1.5,5.8);
  \draw[proxypath] (1.5,.6)
   .. controls (3.1,1.2) and (3.1,2.5) .. (1.5,3.15);
  \draw[proxypath] (1.5,3.55)
   .. controls (2.3,3.85) and (2.3,4.3) .. (1.5,4.6);
  \draw[proxypath] (1.5,4.95)
   .. controls (1.0,5.1) and (1.0,5.25) .. (1.5,5.4);
  \foreach \lo/\hi/\lab in {.6/3.15/N_0,3.55/4.6/N_1,4.95/5.4/N_2}
   \draw[decorate,decoration={brace,amplitude=4pt},draw=pathgreen]
    (3.75,\hi)--(3.75,\lo)
    node[midway,right=6pt,font=\footnotesize] {$\lab$};
  \node[align=center,font=\footnotesize] at (2.5,-.65)
    {local weight comparisons\\then sum expected durations};
 \end{scope}
 \draw[staticpath] (.7,-1.6)--(1.3,-1.6);
 \node[anchor=west,font=\footnotesize] at (1.35,-1.6) {$\Gamma^0$};
 \draw[dynamicpath] (3.1,-1.6)--(3.7,-1.6);
 \node[anchor=west,font=\footnotesize] at (3.75,-1.6) {$\Gamma^t$};
 \draw[proxypath] (5.5,-1.6)--(6.1,-1.6);
 \node[anchor=west,font=\footnotesize] at (6.15,-1.6)
   {paths evaluated at time zero};
\end{tikzpicture}
\caption{Schematic in transverse and row coordinates, with row increasing
upward. In (a), the proxy of \cite{GH24} retains enough excursions of one selected
scale, and its full weight is compared with that of $\Gamma^t$.
Excursions at other scales need not be preserved. In (b), we make a
separate static comparison for each excursion at its own scale;
the dashed pieces represent these local competitors. We do not
assemble them into a full-path proxy or sum their weight errors.
Instead, static rarity bounds the expected duration contributed
at each scale, and these duration bounds are summed over all scales.}
\label{fig:proxycomparison}
\end{figure}

Our local comparisons keep the stability error at the scale of
the excursion being detected. Smaller scales are more subcritical,
so we can impose successively smaller weight tolerances and narrower
tubes relative to their natural fluctuation scales. This improvement
allows us to sum over all scales without a loss depending on $n$.
{The proof has three steps, summarised in Figure~\ref{fig:argumentflow}.}

\begin{figure}[tbp]
\centering
\begin{tikzpicture}[
 flowbox/.style={draw=black!55,rounded corners=2pt,align=center,
   text width=5.45cm,inner sep=6pt,font=\footnotesize},
 flowarrow/.style={->,draw=black!65,line width=.6pt}]
 \node[flowbox] (stability) at (-3.2,0)
  {\textbf{Strong local stability}\\
   All relevant passage values change by at most\\
   $A_j=C\tau_j^{1/4}N_j^{1/3}$.};
 \node[flowbox] (width) at (3.2,0)
  {\textbf{Classify excursions at scale $N_j$}\\
   Use the separation threshold\\
   $w_j=\tau_j^\nu N_j^{2/3}$, where $0<\nu<1/2$.};
 \node[flowbox] (wide) at (-3.2,-2.65)
  {\textbf{Wide excursions}\\
   Many crossings have static deficit $\le3A_j$\\
   and separation $\ge w_j$. Twin peaks use\\
   $A_j/\sqrt{w_j}\le C\tau_j^{1/4-\nu/2}$\\
   to bound their expected duration.};
 \node[flowbox] (slender) at (3.2,-2.65)
  {\textbf{Slender excursions}\\
   {If most rows lie in a tube of relative width\\
   $w_j/N_j^{2/3}=\tau_j^\nu$,\\
   static narrow-tube rarity and FKG\\
   make such long excursions unlikely.}};
 \node[flowbox,text width=11.85cm] (sum) at (0,-5.1)
  {\textbf{Sum over all long-excursion scales}\\
   $\tau_j\asymp a2^{-j/3}$ gives $\sum_j\tau_j^b\le Ca^b$ for wide excursions.\\
   Together with the slender bound and the remaining error estimates,\\
   this makes the expected total long-excursion duration $o(n)$.};
 \node[flowbox,text width=11.85cm] (overlap) at (0,-7.25)
  {\textbf{From durations to a non-trivial dynamics}\\
   The short-excursion reduction gives $O_n(a n^{-1/3})/n\to1$ in probability.\\
   The covariance--overlap identity and uniform integrability then give\\
   positive increment variance in every subsequential limit.};
 \draw[flowarrow] (stability.south)--(wide.north);
 \draw[flowarrow] (width.south)--(slender.north);
 \draw[flowarrow] (width.south)--(3.2,-1.1)--(.25,-1.1)--(wide.north east);
 \draw[flowarrow] (wide.south)--(-3.2,-4.05)--(sum.north);
 \draw[flowarrow] (slender.south)--(3.2,-4.05)--(sum.north);
 \draw[flowarrow] (sum.south)--(overlap.north);
\end{tikzpicture}
\caption{{The overlap argument, with limits taken first as $n\to\infty$
and then as $a\downarrow0$. The same shrinking effective time
$\tau_j$ improves both the twin-peak parameter $A_j/\sqrt{w_j}$
and the slenderness constraint $w_j/N_j^{2/3}$. This permits
summation over scales. Tightness, used in the last box, follows
separately from passage-time stability and static endpoint regularity.}}
\label{fig:argumentflow}
\end{figure}

\begin{enumerate}[label=(\arabic*),leftmargin=*]
\item \emph{Prepare stability for the random excursion endpoints
(Proposition~\ref{st:prop:local}).}
The goal is one event on which all the local passage values we
will need change little, wherever the excursions occur. Fix
$t=a n^{-1/3}$. For row duration $N_j\asymp n2^{-j}$ and
comparable horizontal length, Lemma~\ref{lem:rotation} bounds the
$L^p$ norm of a fixed-endpoint increment by $C\sqrt{pN_jt}$.
Relative to its own static fluctuation scale, the natural size is
\[
 \sqrt{N_jt}=\tau_j^{1/2}N_j^{1/3},
 \qquad \tau_j=tN_j^{1/3}\asymp a2^{-j/3}.
\]
Thus the effective time $\tau_j$ decreases geometrically at shorter
scales. We ask only for the larger tolerance
$A_j=C\tau_j^{1/4}N_j^{1/3}$. The ratio
$\sqrt{N_jt}/A_j$ is of order $\tau_j^{1/4}$, leaving room
for uniformity over endpoints.

Geodesic regularity (Lemma~\ref{st:lem:regularityinput}) places all
possible endpoints in deterministic collections of boxes; see
Figure~\ref{fig:endpointboxes}. The strong stability tails then
allow a union bound over these boxes at every scale $N_j\ge n^\beta$.
The decreasing $\tau_j$ improves the failure probability fast
enough to pay for the increasing number of boxes. This event
applies to the endpoints subsequently selected by the geodesics,
without conditioning on those random endpoints.

\item \emph{Bound the expected duration of wide excursions using
static rarity of near-geodesics
(Lemma~\ref{st:lem:crossing} and Corollary~\ref{wd:cor:sum}).}
Consider an excursion of $\Gamma^t$ from $u$ to $v$, so both
endpoints lie on $\Gamma^0$. On an interior row $m$, let
$x=\Gamma^t(m)$ be its departure coordinate. We ask how much
weight a static path from $u$ to $v$ must lose if it is required
to leave row $m$ at this same coordinate $x$. At time $t$ this
requirement costs nothing, since the excursion is part of a geodesic.
The unrestricted passage value and the two passage values forming
the route through $x$ each change by at most $A_j$. Consequently,
the best such static route has deficit at most $3A_j$.
Appending the unchanged prefix and suffix of $\Gamma^0$ gives
a path between the original endpoints with the same deficit;
see Figure~\ref{fig:threevalues}. Thus $x$ is a near-{maximiser}
of the original static routed profile, with a tolerance determined
by the excursion's own length.

An excursion is wide if its departure coordinates are separated
from those of $\Gamma^0$ by more than
$w_j=\tau_j^\nu N_j^{2/3}$ on a fixed positive fraction of
its rows, where $0<\nu<1/2$. Each such interior row witnesses
two nearly optimal crossings separated by at least $w_j$.
We use estimates for the rarity of separated near-geodesics in
static LPP, formulated as twin-peak bounds for the routed weight
profile. Specifically, the parameterised estimate of
\cite[Proposition 82]{Bha25}, imported as
Proposition~\ref{wd:prop:static}, retains the dependence on both
the deficit tolerance and the separation. Its key small parameter is
\[
 \frac{A_j}{\sqrt{w_j}}\le C\tau_j^{1/4-\nu/2}.
\]
This decreases geometrically with $j$. Each wide excursion has
duration at most a constant times the number of its witness rows.
Summing the single-row probabilities therefore bounds the expected
number of witnesses, and hence the expected total duration of wide
excursions. Individual near-peaks may still occur; what matters is
that their expected total count is small.

\item \emph{Exclude long slender excursions
(Proposition~\ref{sl:prop:dynamic}).}
An excursion could remain within distance $w_j$ of $\Gamma^0$
on most rows while sharing no horizontal overlap with it;
Figure~\ref{fig:excursions} contrasts this with the wide case.
We use the mechanism of \cite{GH23}: a path constrained to a
narrow tube around a prescribed curve on most rows is unlikely
to have weight high enough to be a geodesic. The FKG comparison
of \cite[Lemma 9.6]{GH24} permits the prescribed curve to be
$\Gamma^0$, despite its dependence on the evolving environment.

Here the relative tube width is $w_j/N_j^{2/3}=\tau_j^\nu$.
Thus slenderness becomes a stricter condition at smaller scales,
and its rarity improves fast enough to sum over possible locations.
We keep the estimates in terms of this relative width and the
excursion scale, avoiding an additional loss growing with $n$.
\end{enumerate}

The final count includes every scale. For wide excursions,
linearity of expectation and the witness-row bounds give a
scale-dependent contribution of the form
\[
 \frac{\EE D_{\rm wide}}n\lesssim\sum_j\tau_j^b
       +\text{remaining errors},\qquad
 \sum_j\tau_j^b\le Ca^b
\]
for some $b>0$. Here $D_{\rm wide}$ is the total duration of the
long wide excursions. The remaining errors come from stability
failures and boundary rows; Section~\ref{sec:assembly} controls
them and completes the short-excursion reduction. The probability
of a long slender excursion also tends to zero as $a\downarrow0$.
No independence between scales is needed: stability holds
simultaneously, and durations are summed in expectation. Importantly, in this argument, no single
scale has to produce a detectable whole-path weight deficit.
\par\endgroup

\begin{figure}[tbp]
\centering
\begin{tikzpicture}[x=.94cm,y=.72cm]
 \begin{scope}
  \node[anchor=west] at (0,6.1) {\textbf{(a)} A wide excursion};
  \foreach \y in {0,1,2,3,4,5}
    \draw[black!10] (.1,\y)--(5.3,\y);
  \draw[staticpath] (1.3,0)--(1.3,1)--(1.9,1)
    --(1.9,2)--(2.5,2)--(2.5,3)--(3.1,3)
    --(3.1,4)--(3.7,4)--(3.7,5)--(4.6,5);
  \draw[dynamicpath] (1.3,0)--(2.7,0)--(2.7,1)--(3.3,1)
    --(3.3,2)--(3.9,2)--(3.9,3)--(4.25,3)
    --(4.25,4)--(4.6,4)--(4.6,5);
  \draw[black,line width=1.4pt] (.1,-.45)--(.6,-.45)--(.6,0)--(1.3,0);
  \draw[black,line width=1.4pt] (4.6,5)--(5.15,5)--(5.15,5.5);
  \fill (1.3,0) circle (2pt) node[below right] {$u$};
  \fill (4.6,5) circle (2pt) node[above left] {$v$};
  \fill[pathblue] (2.5,2) circle (2pt);
  \fill[pathorange] (3.9,2) circle (2pt);
  \draw[<->,witnessgold] (2.5,2.3)--node[above,font=\footnotesize] {$\ge w_j$}(3.9,2.3);
  \draw[<->] (-.2,0)--node[left] {$d\asymp N_j$}(-.2,5);
 \end{scope}
 \begin{scope}[xshift=6.1cm]
  \node[anchor=west] at (0,6.1) {\textbf{(b)} A slender excursion};
  \foreach \y in {0,1,2,3,4,5}
    \draw[black!10] (.1,\y)--(5.3,\y);
  \draw[staticpath] (1.3,0)--(1.3,1)--(1.9,1)
    --(1.9,2)--(2.5,2)--(2.5,3)--(3.1,3)
    --(3.1,4)--(3.7,4)--(3.7,5)--(4.6,5);
  \draw[dynamicpath] (1.3,0)--(2.1,0)--(2.1,1)--(2.7,1)
    --(2.7,2)--(3.3,2)--(3.3,3)--(3.9,3)
    --(3.9,4)--(4.6,4)--(4.6,5);
  \draw[black,line width=1.4pt] (.1,-.45)--(.6,-.45)--(.6,0)--(1.3,0);
  \draw[black,line width=1.4pt] (4.6,5)--(5.15,5)--(5.15,5.5);
  \fill (1.3,0) circle (2pt) node[below right] {$u$};
  \fill (4.6,5) circle (2pt) node[above left] {$v$};
  \fill[pathblue] (2.5,2) circle (2pt);
  \fill[pathorange] (3.3,2) circle (2pt);
  \draw[<->,witnessgold] (2.5,2.3)--node[above,font=\footnotesize] {$<w_j$}(3.3,2.3);
 \end{scope}
 \draw[staticpath] (1.1,-1.2)--(1.8,-1.2);
 \node[anchor=west] at (1.9,-1.2) {$\Gamma^0$};
 \draw[dynamicpath] (4.2,-1.2)--(4.9,-1.2);
 \node[anchor=west] at (5,-1.2) {$\Gamma^t$};
 \draw[black,line width=1.4pt] (7.3,-1.2)--(8,-1.2);
 \node[anchor=west] at (8.1,-1.2) {common portions};
\end{tikzpicture}
\caption{Two schematic excursions of the same row duration.
In a wide excursion, a positive fraction of the rows have
departure coordinates separated by at least \(w_j\).
In a slender excursion, this separation is below \(w_j\)
on most rows. Even the latter can have disjoint horizontal
pieces throughout its interior. The drawings show representative
geometries; the definitions count rows and permit exceptions
to the indicated separation.}
\label{fig:excursions}
\end{figure}
\subsection{\texorpdfstring{{Scales and parameters}}{Scales and parameters}}\label{sec:parameters}
\begingroup
We use \(\|X\|_p=(\EE|X|^p)^{1/p}\).
The constants \(c,C\) are positive and may change from
line to line; their dependence on fixed parameters is
indicated when it matters.

There are two independent notions of time that we shall have in the paper.
The row indices describe progress along a staircase; \(t\)
describes the evolution of the Brownian environment. The
parameter \(N_j\) measures row duration, whereas \(\tau_j\)
measures how much the environment has evolved relative to
the critical dynamical time at that duration.

\begingroup
We use \(\beta\) to separate the length scale of short excursions and long excursions. The parameter \(\nu\) determines the tube width that
separates wide from slender excursions, and a smaller \(\nu\)
makes this tube wider. The
wide-excursion estimate requires \(\nu<1/2\), so that the
stability tolerance is small compared with the square root
of the tube width; see \eqref{wd:eq:substitution}.
{The slender-excursion estimate in Proposition~\ref{sl:prop:dynamic}
holds for every fixed \(\nu>0\).}

For the final overlap theorem, the required parameter ranges are
\begin{equation}\label{eq:parameters}
 0<\beta<1/12,\qquad 0<\nu<1/2.
\end{equation}
For example, \(\beta=1/24\) and \(\nu=1/4\) work.
For \(0<a<1\), set
\par\endgroup
\begin{equation}\label{st:eq:scales}
 \begin{gathered}
 t=a n^{-1/3},\qquad N_j=\lfloor n2^{-j}\rfloor,\qquad
 \cI_n=\{j\ge0:N_j\ge n^\beta\},\\
 \tau_j=tN_j^{1/3},\qquad
 \theta_j=\tau_j^\nu,\qquad w_j=\theta_jN_j^{2/3}.
 \end{gathered}
\end{equation}
{We first establish the estimates separately, keeping $\beta$ and $\nu$
free within the ranges stated for each result. The joint restrictions
\eqref{eq:parameters} are imposed at the start of the proof of
Theorem~\ref{thm:main} in Section~\ref{sec:assembly}, where the estimates
are combined using the same wide/slender classification.}
{All estimates below for the BLPP
model hold for sufficiently large \(n\), with \(a\) fixed.}
\par\endgroup

\paragraph{\textbf{Acknowledgements.}}
The author acknowledges the use of GPT-6 Astra at the level of a coauthor in developing the arguments and preparing this manuscript.

\begingroup
\paragraph{\textbf{{Organisation}}}
{{Subsections~\ref{sec:model} and~\ref{sec:parameters} specify
the model, notation and parameters.} {Section~\ref{sec:limit} establishes
the existence and non-triviality of subsequential limiting dynamics,
using the overlap theorem as an input.}
Section~\ref{sec:excursionreduction} then introduces excursions
and reduces the overlap theorem to a bound on long excursions.}
{Section~\ref{sec:stability} proves uniform local
stability, and Section~\ref{sec:crossing} turns it into static
twin-peak events and bounds their probabilities.
Section~\ref{sec:assembly} states the slender-excursion input
and completes the overlap proof. Section~\ref{sec:slender}
proves that input, first for a deterministic reference function
and then for the random geodesic.}
Appendix~\ref{app:parameters} collects the notation, parameter
conditions and external inputs.
\par\endgroup

\begingroup
\section{\texorpdfstring{{Subsequential limits and non-triviality}}{Subsequential limits and non-triviality}}\label{sec:limit}

We first prove Theorem~\ref{cor:nontrivial} assuming the overlap estimate
stated in Theorem~\ref{thm:main}. {This separates the tightness
and non-triviality arguments from the geometric proof of the
overlap estimate.} \begingroup
We use the {normalisation} in \eqref{eq:ourescaling},
with weight multiplier $n^{-1/3}$, throughout the paper.
\par\endgroup
\par\endgroup

\begingroup
\subsection{Tightness and identification of the marginals}\label{sec:outightness}

{In this section, we provide the proof of the tightness and marginal assertions in Theorem~\ref{cor:nontrivial}.} There are five parameters
to control: the four endpoint coordinates of the landscape and
the additional dynamical time. Static BLPP estimates control the
first four, while \cite[Theorem 5]{Bha26} controls the last.
The availability of arbitrarily high moments lets us combine
these bounds into a uniform modulus of continuity.

\par\begingroup
{We use the stationary OU environment and rescaled fields
defined in Section~\ref{sec:oumain}.}
\par\endgroup

\begin{lemma}[OU passage-time increments; {\cite[Theorem 5]{Bha26}}]\label{lem:rotation}
{For deterministic endpoints \(u=(x_0,j_0),v=(x_1,j_1)\in\RR\times\ZZ\)
with \(x_0\le x_1\) and \(j_0\le j_1\), set \(L=x_1-x_0\).
Then, for every \(t\ge0\) and \(p\ge2\),}
\begin{equation}\label{st:eq:rotation}
 \norm{T_u^{v,t}-T_u^{v,0}}_p
 \le C\sqrt{pL(1-e^{-t})}\le C\sqrt{pLt}.
\end{equation}
Moreover, for every \(w\ge2\),
\begin{equation}\label{st:eq:pointtail}
 \PP\!\left(\abs{T_u^{v,t}-T_u^{v,0}}
       >C\sqrt{L(1-e^{-t})w}\right)\le2e^{-w}.
\end{equation}
\end{lemma}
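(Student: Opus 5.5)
The plan is to derive both bounds from the Lyons--Zheng forward--backward martingale decomposition for the stationary reversible OU dynamics. The key input is that the passage value $T_u^{v}$, viewed as a function of the Brownian environment, has Cameron--Martin gradient of squared norm exactly $L$.

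\textbf{Step 1: finite-dimensional approximation.} I would work with the environment on the finitely many rows $j_0,\dots,j_1$ and the interval $[x_0,x_1]$ only; the passage value does not depend on the rest. Discretise each row on a mesh of size $\delta$, so that the weight of a staircase with grid-point departures becomes a linear functional of a finite Gaussian vector of increments. Replace the maximum by the soft-max $F_\beta=\beta^{-1}\log\sum_\pi e^{\beta\,\wgt(\pi)}$. Its gradient with respect to the standardised increments is a convex combination of indicator vectors of staircases, with each increment scaled by the square root of its length. Since every staircase has total horizontal length $L$, Jensen's inequality gives $|\nabla F_\beta|^2\le L$ everywhere. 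All bounds below will depend only on this Lipschitz constant. Letting $\beta\to\infty$ and then $\delta\to0$ (using continuity of Brownian paths and compactness of the staircase simplex) will then transfer the estimates to $T_u^{v,t}-T_u^{v,0}$ via Fatou's lemma.

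\textbf{Step 2: Lyons--Zheng.} For the stationary OU process $X_s$ with reversible Gaussian law and generator $\mathcal A=\Delta-x\cdot\nabla$, and a smooth $f=F_\beta$, I would use the identity
\[
 f(X_t)-f(X_0)=\tfrac12\bigl(M_t-(\widehat M_{T}-\widehat M_{T-t})\bigr),
\]
valid on a window $[0,T]\supseteq[0,t]$. Here $M$ is the forward Dynkin martingale and $\widehat M$ the martingale of the time-reversed process, which has the same law by reversibility. Both have quadratic variation $\int_0^t 2|\nabla f(X_s)|^2\,ds\le 2Lt$, a deterministic bound. The exponential martingale inequality then gives $\PP(|M_t|>x)\le 2e^{-x^2/(4Lt)}$, and the same holds for the backward term. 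A union bound yields $\PP(|f(X_t)-f(X_0)|>C\sqrt{Ltw})\le 2e^{-w}$ after adjusting $C$, and integrating the tail gives $\|f(X_t)-f(X_0)\|_p\le C\sqrt{pLt}$.

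\textbf{Step 3: replacing $t$ by $1-e^{-t}$.} For $t\le1$ we have $t\le C(1-e^{-t})$, so Step 2 already gives \eqref{st:eq:rotation} and \eqref{st:eq:pointtail}. For $t\ge1$, I would bound $T^t-T^0$ by $|T^t-\EE T|+|T^0-\EE T|$. Each term is a centred $\sqrt L$-Lipschitz function of a Gaussian vector, so static Gaussian concentration gives tails $2e^{-x^2/(2L)}$ and moments $C\sqrt{pL}\le C'\sqrt{pL(1-e^{-t})}$. The final inequality $1-e^{-t}\le t$ is elementary.

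\textbf{Main obstacle.} The delicate point is Step 2 together with the limit in Step 1. A naive Gaussian-interpolation argument, writing $B^t=e^{-t}B^0+\sqrt{1-e^{-2t}}B'$ and applying concentration to the pair $(B^0,B')$, fails. The gradient in $B^0$ is $e^{-t}\1_{\Gamma^t}-\1_{\Gamma^0}$, whose squared norm is of order $L-O$, where $O$ is the overlap; this is not small unless the overlap is nearly full. The forward--backward decomposition avoids this because only $|\nabla f|^2$ along the trajectory enters, and that quantity is bounded by $L$ uniformly. I would therefore check carefully the smoothness and integrability hypotheses for Lyons--Zheng in finite dimension with the soft-max, and then confirm that the constants do not depend on $\beta$, $\delta$, or the number of rows $j_1-j_0$, so that the limits in Step 1 are legitimate.
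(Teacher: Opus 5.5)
Your proof is correct, but it takes a genuinely different route from the paper. The paper does not reprove the estimate. It imports the diagonal case $u=\mathbf 0$, $v=\mathbf D$ from \cite[Theorem 5]{Bha26} and reaches general endpoints with $D=j_1-j_0\ge1$ and $L>0$ by the exact rescaling $\widetilde B_j^r(x)=a^{-1/2}\bigl(B^r_{j_0+j}(x_0+ax)-B^r_{j_0+j}(x_0)\bigr)$ with $a=L/D$. This rescaling preserves the OU law and gives $(T_u^{v,r})_r\stackrel{d}{=}(\sqrt{L/D}\,T_{\mathbf 0}^{\mathbf D,r})_r$, so the diagonal bound $\sqrt{pD(1-e^{-t})}$ becomes $\sqrt{pL(1-e^{-t})}$. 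Same-row endpoints are treated separately via the Gaussian increment law.

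You instead prove the underlying fact from scratch. Because every staircase from $u$ to $v$ has horizontal length exactly $L$, the passage value is $\sqrt L$-Lipschitz in the Cameron--Martin norm, whatever the number of rows. Lyons--Zheng then converts this dimension-free Lipschitz bound into sub-Gaussian OU increments at scale $\sqrt{Lt}$. This makes the lemma self-contained and covers all endpoint configurations at once, with no rescaling and no separate same-row case. It also shows the estimate is a property of Lipschitz functionals under OU dynamics rather than of LPP. The price is the soft-max and discretisation limits, and the case split at $t=1$ needed to recover $1-e^{-t}$.

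On your stated obstacle: only the naive interpolation fails, and the Maurey--Pisier rotation handles the correlated pair directly. Take $G,G'$ independent standard Gaussian vectors, $G_\theta=G\cos\theta+G'\sin\theta$, $\dot G_\theta=-G\sin\theta+G'\cos\theta$ and $\alpha=\arccos(e^{-t})$. Then $(G_0,G_\alpha)$ has the law of the environment at times $0$ and $t$, and
\[
 f(G_\alpha)-f(G_0)=\int_0^\alpha\nabla f(G_\theta)\cdot\dot G_\theta\,d\theta ,
\]
with $G_\theta$ and $\dot G_\theta$ independent for each $\theta$. Jensen's inequality over the arc gives sub-Gaussian tails with variance proxy $\alpha^2L\le\frac{\pi^2}{2}L(1-e^{-t})$ for every $t\ge0$. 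This avoids both the stochastic calculus and the split at $t=1$.
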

\begin{proof}
For $u=(x_0,j_0)$ and $v=(x_1,j_1)$ with
$D=j_1-j_0\ge1$ and $L=x_1-x_0>0$, put $a=L/D$.
The translated and scaled environment
\[
 \widetilde B_j^r(x)=a^{-1/2}
 \bigl(B_{j_0+j}^r(x_0+ax)-B_{j_0+j}^r(x_0)\bigr)
\]
has the same OU law, with the same dynamical time parameter.
Consequently, writing $\mathbf D=(D,D)$, we have the identity in
law of processes
\[
 (T_u^{v,r})_{r\in\RR}
 \stackrel{d}{=}\bigl(\sqrt{L/D}\,T_{\0}^{\mathbf D,r}\bigr)_{r\in\RR}.
\]
\cite[Theorem 5]{Bha26} and stationarity therefore give
\eqref{st:eq:rotation} and \eqref{st:eq:pointtail}.
For endpoints on the same row these follow directly from the Gaussian
increment law; zero horizontal separation gives zero weight.

\end{proof}

\begingroup
\begingroup
For $z=(x,s;y,t)$ with $s,t\in N^{-1}\ZZ$ and $s<t$, define
the passage field at physical dynamical time $h$ by
\[
 F_N^h(x,s;y,t)
 =N^{-1/3}\left(
 T_{(Ns+2N^{2/3}x,Ns)}^{(Nt+2N^{2/3}y,Nt),h}
 -2N(t-s)-2N^{2/3}(y-x)\right).
\]
Between row-grid points, interpolate linearly in $s$ and $t$
separately, keeping $x,y$ fixed.
\par\endgroup
\begingroup
This is the same field as in \eqref{eq:ourescaling}, with
physical dynamical time in the superscript:
\[
 F_N^h(z)=\widehat{\sL}_N^{\,N^{1/3}h}(z).
\]
The notation $F_N^h$ is convenient when the spatial scale $N$
varies while the physical time $h$ is held fixed, as in the
local comparisons of Section~\ref{sec:stability}.
\par\endgroup
We work on a fixed closed
{coordinate box $K\subset\RR^4_\uparrow$; its time gap
$t-s$ is bounded below}, so the displayed endpoints are
ordered for all sufficiently large $N$.
\par\endgroup

\begin{lemma}[Static endpoint increments; {\cite[Proposition 2.6, Lemma 11.2]{DOV22}}]\label{st:lem:endpointinput}
For the interpolated field above and every fixed closed endpoint
box {\(K\subset\RR^4_\uparrow\)}, there are \(C_K,N_K<\infty\) such
that, for \(N\ge N_K\), \(z,z'\in K\), every deterministic
dynamical time \(h\), and \(p\ge2\),
\begin{equation}\label{st:eq:endpointmoment}
 \norm{F_N^h(z)-F_N^h(z')}_p
 \le C_Kp^{4/3}|z-z'|^{1/6},\qquad p\ge2.
\end{equation}
The constant \(C_K\) is independent of \(p\), \(N\) and \(h\).
\end{lemma}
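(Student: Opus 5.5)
The plan is to reduce to the static BLPP field, import the uniform two-point tail bound underlying the static modulus of continuity in \cite{DOV22}, and integrate that tail to get moments.

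\textbf{Step 1 (reduce to $h=0$).} For each fixed deterministic dynamical time $h$, the rows $(B_i^h)_{i\in\ZZ}$ are independent two-sided standard Brownian motions. This is the stationarity of the OU environment recorded in Section~\ref{sec:model}. Hence the whole interpolated field $z\mapsto F_N^h(z)$ has the same law as $F_N^0$, which is the static rescaled BLPP field of \cite{DOV22}. So the constant $C_K$ is automatically independent of $h$, and it suffices to prove \eqref{st:eq:endpointmoment} for the static field.

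\textbf{Step 2 (tail bound on grid points).} For endpoints with $s,t\in N^{-1}\ZZ$, I would use the two-point tail estimate behind \cite[Proposition 2.6]{DOV22}, in the form uniform over compact sets given by \cite[Lemma 11.2]{DOV22}. This is a bound of the form
\[
 \PP\bigl(|F_N^0(z)-F_N^0(z')|>m\,|z-z'|^{1/6}\bigr)\le Ce^{-cm^{3/4}},
\]
for $z,z'\in K$, $N\ge N_K$ and $m>0$. The estimates there give increments of order $|x-x'|^{1/2}$ and $|s-s'|^{1/3}$, up to logarithmic factors, with stretched-exponential tails. Since $K$ is bounded, these natural scales dominate $|z-z'|^{1/6}$ after enlarging the constant. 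The slack between the exponents $1/3$ and $1/6$ is what absorbs the logarithmic corrections and also covers the worse tail exponent. I will commit to the exponent $3/4$ in the tail; any fixed stretched exponent would do, at the cost of changing the power of $p$.

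\textbf{Step 3 (interpolation).} Off the row grid, $F_N^0$ is linear separately in $s$ and $t$ with $x,y$ fixed, so it is a convex combination of at most four grid values. I would split into two cases.
\begin{enumerate}[label=(\roman*)]
\item If $|z-z'|\ge N^{-1}$, replace each of $z,z'$ by a convex combination of neighbouring grid points. These lie within $2N^{-1}\le2|z-z'|$ of the original point, so Step~2 and Minkowski's inequality apply.
\item If $|z-z'|<N^{-1}$, then $z$ and $z'$ lie in a common or adjacent grid cell. The increment is at most $N|z-z'|$ times a grid increment over a spacing of size $N^{-1}$. That grid increment is bounded in $L^p$ by $Cp^{4/3}N^{-1/6}$, and $N|z-z'|\cdot N^{-1/6}\le|z-z'|^{1/6}$ whenever $|z-z'|\le N^{-1}$.
\end{enumerate}
This is the step I expect to need the most care. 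Case~(ii) must handle the row-grid scale uniformly, and $K$ must be shrunk or enlarged slightly so that the neighbouring grid points still lie in a compact subset of $\RR^4_\uparrow$ where Step~2 applies.

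\textbf{Step 4 (moments).} Integrating the tail,
\[
 \EE|X|^p=\int_0^\infty pm^{p-1}\PP(|X|>m)\,dm\le C\int_0^\infty pm^{p-1}e^{-cm^{3/4}}\,dm=C\,p\,\Gamma(4p/3)\,c^{-4p/3}\cdot\tfrac43,
\]
applied to $X=(F_N^0(z)-F_N^0(z'))/|z-z'|^{1/6}$. Taking $p$-th roots gives $\|X\|_p\le C_Kp^{4/3}$ by Stirling's formula, which is \eqref{st:eq:endpointmoment}.
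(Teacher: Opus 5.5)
Your proposal is essentially the paper's proof: stationarity removes $h$, the coordinatewise tails from \cite{DOV22} (spatial $e^{-cv^{3/2}}$ at scale $|x-x'|^{1/2}$, longitudinal $e^{-cv^{3/4}}$ at scale $|s-s'|^{1/6}$) are integrated to $L^p$ bounds on grid points of an enlarged box, and the row interpolation is handled by splitting at $|z-z'|\asymp N^{-1}$, with a factor $\delta/\Delta$ on small row increments. Two small points need tightening: in case (ii) you should change one coordinate at a time, because a spatial change is not $N|z-z'|$ times a row-grid increment but a convex combination, with the same row weights, of spatial grid increments that Step 2 bounds directly; and the power $p^{4/3}$ genuinely rests on the imported longitudinal tail having exponent $3/4$ --- slack in the scale exponent cannot compensate for a worse tail exponent, contrary to your remark in Step 2.
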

{The estimate is implicit in the proof of \cite[Theorem 11.1]{DOV22}.
We give the details because this uniform moment formulation is not
stated explicitly there.}
\begin{proof}[Derivation from the static estimates of \cite{DOV22}]
\begingroup
The longitudinal estimate in \cite[Lemma 11.2]{DOV22} has tail
\(C\exp\{-c v^{3/4}\}\) at scale \(|s-s'|^{1/6}\).
Integrating this tail bounds the \(L^p\) norm of such an increment
by \(C_Kp^{4/3}|s-s'|^{1/6}\). Proposition 2.6 gives tail
\(C\exp\{-c v^{3/2}\}\) for spatial increments at scale
\(|x-x'|^{1/2}\), and hence an \(L^p\) bound
\(C_Kp^{2/3}|x-x'|^{1/2}\). On a bounded box,
\(|x-x'|^{1/2}\le C_K|x-x'|^{1/6}\); also
\(p^{2/3}\le p^{4/3}\). Thus the weaker common exponents
in \eqref{st:eq:endpointmoment} bound both types of increment.

Choose a slightly larger closed coordinate box \(K^+\) containing
\(K\) in its interior, still with \(t-s\) bounded away from zero.
For large \(N\), this box contains all neighbouring row-grid points
used to interpolate points of \(K\). Write \(\Delta=N^{-1}\) and
\[
 \mathcal G_N=K^+\cap
 (\RR\times\Delta\ZZ\times\RR\times\Delta\ZZ).
\]
For two points in \(\mathcal G_N\), change their four coordinates
one at a time. All intermediate points remain in the coordinate
box \(K^+\), so the triangle inequality and the preceding
coordinate estimates give
\[
 \|F_N^h(z)-F_N^h(z')\|_p
 \le C_Kp^{4/3}|z-z'|^{1/6},\qquad z,z'\in\mathcal G_N.
\]

We now pass to the separately linear interpolation defining \(F_N^h\).
For \(z\in K\), let \(z_1,\ldots,z_4\in\mathcal G_N\) be the
four points obtained by rounding each of its two row coordinates
to the adjacent mesh values, with its spatial coordinates unchanged.
There are deterministic weights \(w_i(z)\ge0\), summing to one, such that
\[
 F_N^h(z)=\sum_{i=1}^4w_i(z)F_N^h(z_i).
\]
Consequently, for \(z,z'\in K\),
\[
 F_N^h(z)-F_N^h(z')
 =\sum_{i,k=1}^4w_i(z)w_k(z')
       \bigl(F_N^h(z_i)-F_N^h(z'_k)\bigr).
\]
If \(|z-z'|\ge\Delta\), each \(|z_i-z'_k|\) is at most
\(C|z-z'|\), and Minkowski's inequality gives the desired bound.

For smaller increments, we again change one coordinate at a time.
Consider first a row-coordinate change with
\(s_-\le s\le s+\delta\le s_+=s_-+\Delta\), where
\(s_-,s_+\in\Delta\ZZ\), and initially take \(t\in\Delta\ZZ\).
Linear interpolation gives
\[
 F_N^h(x,s+\delta;y,t)-F_N^h(x,s;y,t)
 =\frac{\delta}{\Delta}
   \bigl(F_N^h(x,s_+;y,t)-F_N^h(x,s_-;y,t)\bigr),
\]
and therefore
\[
 \|F_N^h(x,s+\delta;y,t)-F_N^h(x,s;y,t)\|_p
 \le C_Kp^{4/3}\frac{\delta}{\Delta}\Delta^{1/6}
 \le C_Kp^{4/3}\delta^{1/6}.
\]
If \(t\) is between mesh values, the same increment is a convex
combination of the two corresponding increments at adjacent values
of \(t\), so this bound still holds. An increment crossing a cell
boundary is split there into two pieces, and the other row
coordinate is treated in the same way. For a spatial-coordinate
change, the row interpolation weights are unchanged, so the
increment is a convex combination of spatial increments with the
row coordinates on the mesh. Applying the spatial estimate and
then summing the four coordinate changes completes the proof.
Stationarity makes every bound uniform in the deterministic
dynamical time \(h\).
\par\endgroup
\end{proof}

\begin{proof}[Proof of the tightness and marginal assertions in Theorem~\ref{cor:nontrivial}]
\par\begingroup
{Fix $T<\infty$ and a closed coordinate box $K\subset\RR^4_\uparrow$.
It suffices to prove tightness on such boxes, since their
interiors cover $\RR^4_\uparrow$. Choose a fixed larger closed
coordinate box $K^+\subset\RR^4_\uparrow$ containing $K$ in its
interior. This enlargement contains all grid points used to
interpolate points of $K$ once $n$ is sufficiently large.
There is $c_K>0$ such that $t-s\ge c_K$ on $K^+$.
All constants below may depend on these fixed boxes, but are
uniform in $n$ and $\tau,\sigma\in[-T,T]$.}

{For these $n$, the fields $\widehat{\sL}_n$ are continuous on
$[-T,T]\times K$: the Brownian paths are jointly continuous,
passage values {maximise} a continuous weight over the compact
staircase space, and the interpolation formulas agree on common
mesh boundaries.}
\par\endgroup

\textbf{{Dynamical increments.}}
\par\begingroup
{We apply Lemma~\ref{lem:rotation} to the grid endpoint
pairs defined below.} Define the set of grid endpoint pairs by
\[
 \mathcal G_n=K^+\cap
 \bigl(\RR\times n^{-1}\ZZ\times\RR\times n^{-1}\ZZ\bigr).
\]
For $z=(x,s;y,t)\in\mathcal G_n$, the endpoints in
\eqref{eq:ourescaling} have vertical separation
$D=n(t-s)\ge c_Kn$ and horizontal separation
\[
 0<L=n(t-s)+2n^{2/3}(y-x)\le C_Kn.
\]
\par\endgroup
{The deterministic terms $2n(t-s)+2n^{2/3}(y-x)$ in
\eqref{eq:ourescaling} do not depend on the dynamical time
$\tau$. Thus subtracting the values at $\tau$ and $\sigma$
leaves $n^{-1/3}$ times the corresponding passage-time
increment. Applying the general-endpoint bound above at elapsed
OU time $n^{-1/3}|\tau-\sigma|$ gives, for $p\ge2$,}
\begin{equation}\label{eq:oudynamicmoment}
 \|\sL_n^\tau(z)-\sL_n^\sigma(z)\|_p
 \le C_K n^{-1/3}\sqrt{pn\,n^{-1/3}|\tau-\sigma|}
 =C_K\sqrt p\,|\tau-\sigma|^{1/2}.
\end{equation}
\par\begingroup
For $z\in K$, write
$\widehat{\sL}_n^\tau(z)=\sum_{i=1}^4w_i(z)\sL_n^\tau(z_i)$,
where $z_i\in\mathcal G_n$ are its four {neighbouring} grid
endpoint pairs, $w_i(z)\ge0$ and $\sum_iw_i(z)=1$.
The weights do not depend on $\tau$, so Minkowski's inequality
and \eqref{eq:oudynamicmoment} give
\[
 \|\widehat{\sL}_n^\tau(z)-\widehat{\sL}_n^\sigma(z)\|_p
 \le\sum_{i=1}^4w_i(z)
       \|\sL_n^\tau(z_i)-\sL_n^\sigma(z_i)\|_p
 \le C_K\sqrt p\,|\tau-\sigma|^{1/2}.
\]
\par\endgroup

\par\begingroup
\textbf{{Endpoint increments.}}
At each deterministic $\tau$, the environment has the static
Brownian law. Apply Lemma~\ref{st:lem:endpointinput} to
$\widehat{\sL}_n^\tau(z)=F_n^{\tau n^{-1/3}}(z)$ to obtain

\begin{equation}\label{eq:oustaticmoment}
 \|\widehat{\sL}_n^\tau(z)-\widehat{\sL}_n^\tau(z')\|_p
 \le C_{K,p}\|z-z'\|^{1/6},
 \qquad z,z'\in K,\quad p\ge2.
\end{equation}
{Here $\|\cdot\|$ is the Euclidean norm.}
The constant is uniform in sufficiently large $n$ and,
by stationarity, in deterministic $\tau$.
\par\endgroup

\textbf{{Joint tightness and identification of the marginals.}}
{For every $z,z'\in K$, $\tau,\sigma\in[-T,T]$ and
$p\ge2$, the triangle inequality, with intermediate value
$\widehat{\sL}_n^\sigma(z)$, gives}
\begin{equation}\label{eq:oujointmoment}
 \|\widehat{\sL}_n^\tau(z)-\widehat{\sL}_n^\sigma(z')\|_p
 \le C_{K,p}\bigl(
       |\tau-\sigma|^{1/2}+\|z-z'\|^{1/6}\bigr).
\end{equation}
Writing $d=\| (\tau,z)-(\sigma,z')\|$, we obtain
\[
 \EE\left|\widehat{\sL}_n^\tau(z)
              -\widehat{\sL}_n^\sigma(z')\right|^p
 \le C_{K,p}d^{p/6},\qquad d\le1.
\]
Choose $p>30$, so that $p/6>5$, the number of parameters.
The Kolmogorov--Chentsov tightness criterion gives uniform
control of local H\"older moduli of every exponent less than
$1/6-5/p$. Together with tightness at one point in each parameter
box, supplied by the static one-point bounds, this gives
tightness on $[-T,T]\times K$. A countable compact exhaustion
and a diagonal subsequence give locally uniform subsequential
convergence on $\RR\times\RR^4_\uparrow$.

For each fixed $\tau$, evaluation at $\tau$ is continuous in
this topology, and \cite[Theorem 11.1]{DOV22} identifies the
limiting field $\sL^\tau$ as a directed landscape. Finally,
stationarity of the OU environment makes the joint law of
$\widehat{\sL}_n$ invariant under translations of $\tau$.
Such translations are continuous for locally uniform
convergence, so this invariance passes to the limit.
\end{proof}
\par\endgroup

\begingroup
\begin{proof}[Proof of the reversibility assertion in Theorem~\ref{cor:nontrivial}]
The stationary OU environment is invariant in law under
$r\mapsto-r$: this follows directly from its {centered} Gaussian
law and the covariance $e^{-|r-r'|}$ between matching Brownian
increments. Passage values at each time are measurable functions
of that time's environment, and the interpolation in
\eqref{eq:ourescaling} does not mix dynamical times.
Consequently, $\widehat{\sL}_n^\tau$ and
$\widehat{\sL}_n^{-\tau}$ have the same joint law. The map
$f(\tau,z)\mapsto f(-\tau,z)$ is continuous in the local
uniform topology, so this invariance passes to every
subsequential limit.
\end{proof}

\subsection{Increment bounds and non-triviality}\label{sec:nontriviality}
Tightness and the static marginal law by themselves permit a
process that samples one directed landscape and keeps it fixed.
We now use Theorem~\ref{thm:main} to bound the mean-square change
from below at every sufficiently small positive dynamical time.
The remaining sections prove that overlap theorem.

\begin{proof}[Proof of the increment bounds in Theorem~\ref{cor:nontrivial}, assuming Theorem~\ref{thm:main}] 
{The finite-time covariance identity in \cite[Section 4.4]{GH24}
and mean-overlap monotonicity in \cite[Theorem 4.1(2)]{GH24} give}
\begin{align}
 \EE({T_{\mathbf{0}}^{\mathbf{n},t}}-{T_{\mathbf{0}}^{\mathbf{n},0}})^2
    &=2\int_0^t e^{-s}\EE O_n(s)\,ds,
       \label{eq:covidentity}\\
 s&\longmapsto\EE O_n(s)\quad\hbox{is non-increasing}.
       \label{eq:covmonotone}
\end{align}
\begingroup
Taking $\kappa_*=1/2$ in Theorem~\ref{thm:main}, choose a
sufficiently small $a_*>0$ so that, with $b_*=1/4$,
\begin{equation}\label{st:eq:target}
 \liminf_{n\to\infty}
       \frac{\EE O_n(a_*n^{-1/3})}{n}\ge b_*.
\end{equation}
Fix $0<h\le a_*$ and set
\par\endgroup
\[
 X_n(h)=n^{-1/3}({T_{\mathbf{0}}^{\mathbf{n},h n^{-1/3}}}-{T_{\mathbf{0}}^{\mathbf{n},0}})
       =\widehat{\sL}_n^h(0,0;0,1)
           -\widehat{\sL}_n^0(0,0;0,1).
\]
On the integration interval $0\le s\le h n^{-1/3}$,
monotonicity gives $\EE O_n(s)\ge\EE O_n(a_*n^{-1/3})$.
Also $O_n(s)\le n$ deterministically. Therefore
\begin{align*}
 2n^{-2/3}(1-e^{-h n^{-1/3}})\EE O_n(a_*n^{-1/3})
 &\le\EE X_n(h)^2\\
 &\le2n^{1/3}(1-e^{-h n^{-1/3}})\le2h.
\end{align*}
Since $n^{1/3}(1-e^{-h n^{-1/3}})\to h$,
\eqref{st:eq:target} implies
\begin{equation}\label{eq:secondlower}
 \liminf_{n\to\infty}\EE X_n(h)^2\ge2b_*h.
\end{equation}

To pass this bound to a subsequential limit, we use the
case $p=4$ of Lemma~\ref{lem:rotation}:
\begin{equation}\label{eq:fourth}
 \sup_n\EE|X_n(h)|^4\le Ch^2.
\end{equation}
In particular,
\[
 \sup_n\EE\bigl[X_n(h)^2\ind_{\{X_n(h)^2>K\}}\bigr]
 \le Ch^2/K\longrightarrow0\qquad(K\to\infty).
\]
Thus the squares are uniformly integrable. Along any subsequence
whose fields converge in law, $X_n(h)$ converges in law to
\[
 X(h)=\cL^h(0,0;0,1)-\cL^0(0,0;0,1).
\]
For fixed $K$, bounded continuous convergence applies to
$\min(X_n(h)^2,K)$. The uniform-integrability bound then
allows $K\to\infty$, giving
$\EE X_n(h)^2\to\EE X(h)^2$ along the subsequence.
Consequently,
\[
 2b_*h\le\EE X(h)^2\le2h,\qquad 0<h\le a_*.
\]
Stationarity gives the same bounds for increments from any
deterministic time $\tau$. This proves \eqref{eq:limitlower}
with $a_0=a_*$ and $c=2b_*$, and hence non-triviality.
\end{proof}
\par\endgroup

\section{\texorpdfstring{{Excursions and the reduction to long excursions}}{Excursions and the reduction to long excursions}}\label{sec:excursionreduction}

\begingroup
An excursion $E$ of $\Gamma^t$ about $\Gamma^0$ consists of
the portions of the two paths between common points
$u=(u_x,i)$ and $v=(v_x,k)$, with the portions disjoint
between these endpoints. {Here $i,k\in\ZZ$: between consecutive integer rows both paths
are vertical, so they cannot separate or first meet strictly
inside that strip.} Its lifetime is $[i,k]$ and its
row duration is $d_E=k-i$. Necessarily $i<k$: if both
endpoints were on the same row, both paths would traverse
the same horizontal interval between them. The lifetimes
of distinct excursions have disjoint interiors, and thus we must always have
\[
 \sum_E d_E\le n.
\]
Figure~\ref{fig:excursiondurations} illustrates excursions
at several different scales.
\par\endgroup

\begin{figure}[tbp]
\centering
\begin{tikzpicture}[x=.59cm,y=.37cm]
 \draw[->,black!45] (-1,0)--(-1,14.7) node[above] {row};
 \draw[->,black!45] (0,-1)--(15,-1) node[right] {$x$};
 \draw[black!70,line width=1.4pt] (0,0)
  \foreach \r in {1,...,14} {--({\r-1},\r)--(\r,\r)};
 \draw[staticpath] (2,2)--(2,3)--(3,3);
 \draw[dynamicpath] (2,2)--(3,2)--(3,3);
 \draw[staticpath] (5,5)--(5,6)--(6,6)--(6,7)--(7,7)
  --(7,8)--(8,8)--(8,9)--(9,9);
 \draw[dynamicpath] (5,5)--(7,5)--(7,6)--(8,6)--(8,7)--(9,7)--(9,9);
 \draw[staticpath] (11,11)--(11,12)--(12,12)--(12,13)--(13,13);
 \draw[dynamicpath] (11,11)--(13,11)--(13,13);
 \foreach \x/\y in {0/0,2/2,3/3,5/5,9/9,11/11,13/13,14/14}
  \fill (\x,\y) circle (1.8pt);
 \node[below left] at (0,0) {${\mathbf{0}}$};
 \node[above right] at (14,14) {${\mathbf{n}}$};
 \foreach \r in {2,3,5,9,11,13}
  \draw[guide] (\r,\r)--(16,\r);
 \draw[<->] (16.3,2)--node[right] {$d_1$}(16.3,3);
 \draw[<->] (16.3,5)--node[right] {$d_2$}(16.3,9);
 \draw[<->] (16.3,11)--node[right] {$d_3$}(16.3,13);
 \node[anchor=east] at (1.5,2.5) {$E_1$};
 \node[anchor=east] at (5,7) {$E_2$};
 \node[anchor=east] at (10.6,12) {$E_3$};
 \draw[staticpath] (1,16)--(2.2,16);
 \node[anchor=west] at (2.4,16) {$\Gamma^0$};
 \draw[dynamicpath] (6,16)--(7.2,16);
 \node[anchor=west] at (7.4,16) {$\Gamma^t$};
 \draw[black!70,line width=1.4pt] (11,16)--(12.2,16);
 \node[anchor=west] at (12.4,16) {common portions};
\end{tikzpicture}
\begingroup
\caption{Excursions of different row durations.
Between consecutive common portions, the blue and orange paths
separate and then meet again. The three lifetimes have disjoint
interiors and durations $d_1,d_2,d_3$. These durations count rows;
horizontal non-overlap is instead the total length of the blue
horizontal pieces not shared with the orange path.}
\label{fig:excursiondurations}
\endgroup
\end{figure}
{For a cutoff exponent $\beta>0$, we call an}
excursion long if \(d\ge n^\beta\). Write
\(D_{\mathrm{long}}\) for the sum of the row durations of long
excursions.
{Using the short-excursion estimates of \cite{GH24}, the following
lemma shows that, to control the missing overlap, it suffices to
bound the total duration of long excursions.}
\begingroup
\begin{lemma}[Reduction to long excursions]\label{lem:fullreduction}
\begingroup
Fix $0<\beta<1/12$. There is a constant $C>0$ such that,
for every $\varepsilon>0$,
\begin{equation}\label{st:eq:GHreduction}
 \lim_{n\to\infty}\ \sup_{0\le t\le n^{-1/3}}
 \PP\!\left(n-O_n(t)>C D_{\rm long}(t)+\varepsilon n\right)=0.
\end{equation}
\par\endgroup
\end{lemma}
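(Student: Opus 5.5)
The approach is to decompose the missing horizontal overlap $n-O_n(t)$ excursion by excursion, and bound the horizontal length of $\Gamma^0$ inside each excursion in terms of its row duration. Every horizontal piece of $\Gamma^0$ not shared with $\Gamma^t$ lies, apart from a boundary contribution at the two endpoints, within the lifetime of a unique excursion $E$. Writing $H_E$ for the horizontal length of $\Gamma^0$ between the endpoints $u,v$ of $E$, we have deterministically
\[
 n-O_n(t)\le\sum_E H_E .
\]
We split this sum into long excursions ($d_E\ge n^\beta$) and short ones, and show two things. First, on an event of probability tending to one uniformly in $t\le n^{-1/3}$, $H_E\le Cd_E$ for every long excursion, up to a total error $\varepsilon n/2$. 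Second, $\sum_{E\ \text{short}}H_E\le\varepsilon n/2$ with probability tending to one, uniformly in $t\le n^{-1/3}$.

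For the long excursions, the horizontal span of $\Gamma^0$ over a row interval $[i,k]$ is $\Gamma^0(k)-\Gamma^0(i-1)$. I would invoke static geodesic regularity in the form of the regularity input later stated as Lemma~\ref{st:lem:regularityinput}, or directly the uniform transversal fluctuation bounds of \cite{GH24,Ham19}. These give that, with high probability, simultaneously for all row intervals of length $d\ge n^\beta$ inside $[0,n]$, the horizontal increment of $\Gamma^0$ is at most $d+Cd^{2/3}(\log n)^{C}\le 2d$; the same holds for $\Gamma^t$ by stationarity. A union bound over the $O(n^2)$ integer row pairs, together with stretched-exponential tails at scale $d^{2/3}$, suffices, because $d\ge n^\beta$ makes the polylogarithmic loss negligible. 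This yields $\sum_{E\ \text{long}}H_E\le 2D_{\rm long}$, so $C=2$ works.

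For the short excursions, I would import the short-excursion argument of \cite[Proposition 8.4 and its proof]{GH24}. That argument shows that for $t\le n^{-1/3}$ (indeed for much larger times), the total row duration of excursions with $d_E<n^\beta$ is $o(n)$ in probability, uniformly in $t$. This is where the restriction $\beta<1/12$ enters, matching their exponent constraints; I would check that their statement is uniform over $t$ in this range and adapt the parameter bookkeeping. We must then convert row duration into horizontal length. A short excursion has duration below $n^\beta$, but its horizontal length is not a priori controlled by its duration at small scales. I would handle this with the same regularity estimate at scale $n^\beta$: all horizontal increments of $\Gamma^0$ over windows of at most $n^\beta$ rows are at most $n^{2\beta}$, say, with high probability, since the environment is Brownian with Gaussian sup-tails. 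Each short excursion then contributes $H_E\le n^{2\beta}$.

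The main obstacle is converting the $o(n)$ short-excursion duration into $o(n)$ horizontal length. Crude per-excursion bounds lose a factor $n^{2\beta}/d_E$. To avoid this, I would apply regularity to blocks rather than individual excursions. Partition $[0,n]$ into consecutive windows of $n^{\beta}$ rows; only windows containing a short-excursion row matter. By \cite{GH24}, the number of such windows is $o(n^{1-\beta})$, if their estimate is phrased as a count of windows containing non-overlap, as I expect from the structure of the proof of Proposition 8.4. Each window has horizontal length at most $2n^\beta$ with high probability, so the total is $o(n)$. If instead the cited result only controls durations, I would rerun its first-moment argument, which bounds the expected number of rows $m$ at which a short excursion is present, and replace "rows" by "horizontal length on row $m$." This uses that the horizontal length of $\Gamma^0$ on a typical row has bounded moments under stationarity along the path.
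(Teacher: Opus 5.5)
Your long-excursion step agrees with the paper: the span bound $v_x-u_x\le2d_E$ from static regularity gives $C=2$. The short-excursion step, however, rests on a misreading of \cite[Proposition 8.4]{GH24}. That result does not say the total duration of all excursions with $d_E<n^\beta$ is $o(n)$. It says this only for short excursions that are not $\delta$-thin, i.e.\ those with $\max_m|\Gamma^t(m)-\Gamma^0(m)|\ge\delta$ for a fixed $\delta>0$.

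The restriction cannot be dropped. If $\Gamma^t(m)\ne\Gamma^0(m)$, the two vertical segments between rows $m$ and $m+1$ are disjoint, so that strip lies in some excursion lifetime. At any positive dynamical time the exit coordinate is a maximiser of a locally Brownian routed profile perturbed by independent noise, so it generically moves, if only slightly. Hence $\sum_E d_E$ is typically of order $n$. In exactly the regime $O_n(t)\approx n$ that Theorem~\ref{thm:main} concerns, nearly all of this duration is carried by short, mostly microscopic, excursions. The same objection defeats your block argument, since essentially every window of $n^\beta$ rows contains a short-excursion row. It also defeats your first-moment fallback, since the expected number of rows covered by short excursions is of order $n$, not $o(n)$.

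The missing idea is to split short excursions by thinness.
\begin{itemize}
\item For a $\delta$-thin excursion, apply $\Leb([a,b]\setminus[a',b'])\le|a-a'|+|b-b'|$ row by row. This gives, deterministically, $|\Gamma_E^0\setminus\Gamma_E^t|_{\hor}\le2\sum_m|\Gamma^t(m)-\Gamma^0(m)|\le2\delta d_E$, hence at most $2\delta n$ in total, with $\delta$ chosen small in terms of $\varepsilon$.
\item For short non-thin excursions, a duration bound still does not convert into horizontal length. An $o(n)$ duration only bounds their number by $o(n)$, and each may have span up to $2\lceil n^\beta\rceil$. The paper instead bounds the number $M_n$ of such excursions with lifetimes in $[gn,(1-g)n]$ by $o(n^{1-\beta})$. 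It uses the width bound of \cite[Lemma 10.2]{GH24}, the unlucky count $n^{1-\lambda+o(1)}$ of \cite[Lemma 10.3]{GH24} with $\lambda=1/12>\beta$ (this is where $\beta<1/12$ enters), and \cite[Lemma 10.5]{GH24} for normal excursions. Multiplied by the span bound $2\lceil n^\beta\rceil$, this is $o(n)$.
\item Rows in the boundary strips near $0$ and $n$, where those bulk counts are not used, are handled by global confinement. This costs $2gn+O(n^{3/4})$, plus $O(n^\beta)$ for the at most two excursions crossing the strip edges.
\end{itemize}
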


{Before proving Lemma~\ref{lem:fullreduction}, we record the
two static inputs it uses; both will also be needed in
Section~\ref{sec:stability}.}
\begingroup
\begingroup
We first record a uniform weight estimate that will be useful for us throughout the paper. For a scale $N>0$ and ordered endpoints $p=(x,i)$, $q=(y,k)$
{in $\RR\times\ZZ$}, with $i<k$, write
\[
 {W_N^0}(p,q)=N^{-1/3}\bigl[T_p^{q,0}-(y-x)-(k-i)\bigr].
\]
Thus ${W_N^0}(p,q)$ is the {centered}, {normalised} passage value,
{optimised} over all staircases between the two endpoints.
\par\endgroup

\begingroup
\begin{lemma}[Uniform weights in a compact region; {\cite[Proposition 1.8]{GH23}}]\label{sl:lem:compactweightinput}
{There are absolute constants $c,C>0$ such that
the following holds for every integer $N\ge1$ whenever}
\[
 1\le R\le cN^{1/46},\qquad C\le\zeta\le cN^{1/30}.
\]
{Let \(\mathcal Q\) be the set of ordered endpoint pairs}
\(p=(x,i),q=(y,k)\) satisfying

\begingroup
\[
  \begin{gathered}
 {i,k\in[0,3N]\cap\mathbb Z,\qquad {N/2\le k-i\le N,}}\\
 {\max\{|x-i|,|y-k|\}\le4RN^{2/3}.}
 \end{gathered}
\]
\par\endgroup
 {Then}\footnote{{The term \(R^2\) in the lemma
accounts for the parabolic loss: a transverse displacement of order
\(RN^{2/3}\) over \(N\) rows produces a {normalised} correction of order
\(N^{-1/3}(RN^{2/3})^2/N=R^2\).}}
\begingroup
\[
 \PP\left(\sup_{(p,q)\in\mathcal Q}|{W_N^0}(p,q)|
         >C\bigl(\zeta+R^2\bigr)\right)
 \le CR^2e^{-c\zeta^{3/2}}.
\]
\par\endgroup
\end{lemma}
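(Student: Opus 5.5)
The plan is to derive the statement from the uniform-in-compact-region estimate of \cite[Proposition 1.8]{GH23} by a change of variables, not to reprove it. That proposition controls, at a single scale, the normalised passage values between all endpoint pairs in a parallelogram of height of order $N$ and transverse width of order $N^{2/3}$. Its tail is $Ce^{-c\zeta^{3/2}}$ at level $\zeta$, valid for $\zeta$ below a power of $N$. Our region $\mathcal Q$ is larger in two respects: the rows range over $[0,3N]$ rather than a single window, and the transverse offsets $|x-i|,|y-k|$ may reach $4RN^{2/3}$. Both enlargements are handled by a deterministic covering together with translation invariance and Brownian scaling of the static environment.

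\textbf{Step 1: covering.} Partition the allowed starting rows $[0,3N]$ into $O(1)$ windows of height comparable to $N$. Partition the transverse offsets of each endpoint into $O(R)$ intervals of length $N^{2/3}$. This gives $O(R^2)$ cells of endpoint pairs $(p,q)$. In each cell, both endpoints vary over $O(N^{2/3})$-wide windows, and the constraint $N/2\le k-i\le N$ holds. By the translation invariance of the Brownian environment in the row index and in $x$ (after re-pinning, which does not change increments), each cell is equivalent in law to a cell with $p$ near the origin.

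\textbf{Step 2: reducing each cell to the characteristic direction.} In a cell, $q$ is displaced from the diagonal through $p$ by a slope $\rho=(y-x)/(k-i)$ with $|\rho-1|\le CRN^{-1/3}$. A shear and the scaling used in the proof of Lemma~\ref{lem:rotation}, namely $\widetilde B_j(x)=a^{-1/2}B_j(ax)$, map this to a near-diagonal cell. The centering then changes from $(y-x)+(k-i)$ to $2\sqrt{(y-x)(k-i)}$. Their difference is $(\sqrt{y-x}-\sqrt{k-i})^2$, which is at most of order $(RN^{2/3})^2/N$. After the factor $N^{-1/3}$ this is bounded by $CR^2$. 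This is precisely the $R^2$ term, and it requires $R\le cN^{1/46}$, or any small power of $N$, so that the rescaled cell still lies in the parameter range of \cite{GH23}. Applying \cite[Proposition 1.8]{GH23} on each cell at level $\zeta$ therefore gives
\[
 \sup_{\text{cell}}|W_N^0|\le C(\zeta+R^2)
\]
off an event of probability $Ce^{-c\zeta^{3/2}}$, provided $\zeta\le cN^{1/30}$. The upper constraint on $\zeta$ is inherited from that source.

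\textbf{Step 3: union bound.} Summing the failure probabilities over the $O(R^2)$ cells yields $CR^2e^{-c\zeta^{3/2}}$. The main obstacle is Step~2: one must check that after shearing, the rescaled endpoints fall inside the exact region hypothesised in \cite{GH23}, that the scale parameter there stays comparable to $N$, and that the parabolic correction is uniform over the cell rather than only at its centre. If the direct shear is awkward, the fallback is to use one-point tails at a net of $N^{2/3}$-spaced endpoints. That net can be combined with the two-point (spatial modulus) bounds of \cite{GH23}, or with the increment estimates behind Lemma~\ref{st:lem:endpointinput}, to extend the bound from the net to all of $\mathcal Q$.
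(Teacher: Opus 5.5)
Your Step~2 fails as stated. At finite $N$ the only exact symmetries available are translations and the horizontal Brownian scaling you quote; a row-dependent shift $x\mapsto x+cm$ does not map staircases to staircases. Scaling by a cell's chord slope $\rho_b$ tilts both endpoint windows. For cells whose windows are offset by order $RN^{2/3}$ one has $|\rho_b-1|\asymp RN^{-1/3}$. A window occupying $\asymp N$ rows therefore drifts by $\asymp N|\rho_b^{-1}-1|\asymp RN^{2/3}$ from the new slope-one reference line. So after rescaling, your cells still have transverse extent of order $R$ in the coordinates of \cite{GH23}. Without rescaling they are unit windows, but with relative offset of order $R$. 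Either way you are using the cited estimate at spatial scale $R$. Hence the per-cell bound $Ce^{-c\zeta^{3/2}}$ with an $R$-free prefactor, on which your $O(R^2)$ union bound rests, is not justified. This is exactly the effect handled in Lemma~\ref{st:lem:commoncompact}: there the row bins have height $\ell_j\asymp N_j/H_j$, so that slope error times bin height stays $O(N_j^{2/3})$. Your fallback does not give the stated tail either. The moments in Lemma~\ref{st:lem:endpointinput} grow like $p^{4/3}$, so chaining them yields tails of order $e^{-c\zeta^{3/4}}$ rather than $e^{-c\zeta^{3/2}}$.

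The argument can be repaired with row bins of height $\asymp N/R$. The rescaled cells are then of unit size, and there are $O(R^4)$ of them. For $C\ge1$ one has $\min\{1,CR^4e^{-c\zeta^{3/2}}\}\le CR^2e^{-c\zeta^{3/2}/2}$; check the cases $R^2e^{-c\zeta^{3/2}/2}\ge1$ and $R^2e^{-c\zeta^{3/2}/2}<1$ separately. The stated form then follows after relabelling $c$.

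The paper's route is shorter. It uses the single map $(x,m)\mapsto\bigl((x-m)/(2(4N)^{2/3}),\,m/(4N)\bigr)$, which puts endpoint times in $[0,3/4]$ and gaps in $[1/8,1/4]$. It then applies \cite[Proposition 1.8]{GH23} directly with $\ell=2$ and $\ell=3$, taking the spatial parameters $M,L$ of order $R$. That estimate already covers the whole region for the parabolically adjusted weight and supplies the prefactor $CR^2$. No covering or change of slope is needed: the difference from $W_N^0(p,q)$ is the deterministic term $((y-x)-(k-i))^2/(4N^{1/3}(k-i))\in[0,32R^2]$. This agrees to leading order with your centering computation and is absorbed into the threshold.
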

\begin{proof}
Apply the map
\[
 (x,m)\longmapsto
 {\left(\frac{x-m}{2(4N)^{2/3}},\frac{m}{4N}\right).}
\]
{The endpoint times then lie in \([0,3/4]\),
their gaps lie in \([1/8,1/4]\), and their horizontal coordinates
have magnitude at most \(2\cdot4^{-2/3}R\).
Apply \cite[Proposition 1.8]{GH23} with \(\ell=2\), and also
with \(\ell=3\) to include the boundary gap \(1/8\), taking
the spatial parameters \(M,L\) of order \(R\).}
After the fixed conversion of weights, these estimates
control the parabolically adjusted weight in absolute value at
threshold \(C\zeta\). In original coordinates, the correction
added to \({W_N^0}(p,q)\) satisfies, throughout \(\mathcal Q\),
\begingroup
\[
 0\le\frac{\bigl((y-x)-(k-i)\bigr)^2}{4N^{1/3}(k-i)}
 \le{32R^2}.
\]
\par\endgroup
Absorbing this deterministic term into the weight threshold gives
the stated bound. {The restrictions
\(R\le cN^{1/46}\) and \(\zeta\le cN^{1/30}\) ensure that
the spatial parameters and weight threshold lie in the ranges
of the cited estimates.}
\end{proof}
\par\endgroup

\begingroup
{The following specialization of the static regularity estimates
supplies the horizontal-span bounds needed in the proof of
Lemma~\ref{lem:fullreduction}. It will also control the possible
endpoints of local segments in Section~\ref{sec:stability} and
their weights in Subsection~\ref{sec:slenderapplication}.}
All pairs of points in the statement are allowed simultaneously,
so its applications require no union over individual rows.
\par\endgroup

\begingroup
\begin{lemma}[Simultaneous geodesic and weight regularity;
{\cite[Theorems 1.4(1), 1.6(1), Corollary 1.5 and Proposition 1.8]{GH23}}]
\label{st:lem:regularityinput}
Fix $\beta\in(0,1)$ and $c_0\in(0,1/8)$. There are constants
{$c,C,R_0>0$ such that, for all sufficiently large $n$ depending
on $\beta,c_0$, and every $R$ satisfying
$R_0\le R\le(c_0n^\beta)^{1/64}$, the following holds at any two
deterministic dynamical times $0,t$.}
\begingroup
Let $\cG_{n,R}$ be the event on which, for both $s\in\{0,t\}$,
\begin{equation}\label{st:eq:globalconfinement}
 \sup_{(x,i)\in\Gamma^s}|x-i|\le Rn^{2/3},
\end{equation}
\par\endgroup
and, for every $j\in\cI_n$ and every pair
$(x,i),(y,k)\in\Gamma^s$ with {$i,k\in\ZZ$} and
$c_0N_j\le k-i\le2N_j$, 
\begin{equation}\label{st:eq:georeg}
 |(y-k)-(x-i)|\le H_jN_j^{2/3},
 \qquad H_j=C_{c_0}R(1+j)^{1/3},
\end{equation}

and
\begin{equation}\label{st:eq:weightreg}
 \left|T_{(x,i)}^{(y,k),s}-(y-x)-(k-i)\right|
 \le C_{c_0}R^2(1+j)^{2/3}N_j^{1/3}.
\end{equation}
Then
\begin{equation}\label{st:eq:georegprob}
 \PP(\cG_{n,R}^c)\le Ce^{-cR^3}.
\end{equation}
\end{lemma}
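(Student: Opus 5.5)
This is a specialisation of the static estimates of \cite{GH23}, so the plan is to reduce everything to a fixed deterministic dynamical time. At each deterministic time $s$ the environment has the static Brownian law. It therefore suffices to prove the bound for a single static geodesic $\Gamma=\Gamma_{\mathbf 0}^{\mathbf n}$, with failure probability $Ce^{-cR^3}$, and then take a union bound over $s\in\{0,t\}$, which only doubles the constant. I treat the three requirements separately and intersect the resulting events.

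\textbf{Global confinement.} I would deduce \eqref{st:eq:globalconfinement} from the transversal fluctuation estimate \cite[Theorem 1.4(1)]{GH23}. After the usual rescaling this gives
\[
\PP\Bigl(\sup_{(x,i)\in\Gamma}|x-i|>Rn^{2/3}\Bigr)\le Ce^{-cR^3}.
\]

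\textbf{Local regularity at scale $N_j$.} Fix $j\in\cI_n$. I would partition the rows $[0,n]$ into $O(2^j)$ blocks of length $c_0N_j/2$, say. Any pair $(x,i),(y,k)\in\Gamma$ with $c_0N_j\le k-i\le 2N_j$ has $i$ in some block and $k$ in a block at bounded block-distance from it. The modulus-of-continuity statement \cite[Theorem 1.6(1)]{GH23} and its corollary \cite[Corollary 1.5]{GH23} control geodesic fluctuation over all such windows simultaneously. Applied at fluctuation level $H=H_j$, they bound the probability that \eqref{st:eq:georeg} fails at scale $j$ by roughly
\[
C2^j\exp\{-cH_j^3\}=C2^j\exp\{-cC_{c_0}^3R^3(1+j)\}.
\]
This is where the factor $(1+j)^{1/3}$ in $H_j$ earns its place: the term $2^j$ counting blocks is absorbed once $C_{c_0}$ is large. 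Summing over $j\ge0$ then gives $Ce^{-cR^3}$ for large $R\ge R_0$.

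For \eqref{st:eq:weightreg}, I would work on the confinement event together with the previous display. There every pair of points of $\Gamma$ in the relevant row window lies in a parallelogram of width $O(H_jN_j^{2/3})$ around the diagonal, after translating the block to the origin. Lemma~\ref{sl:lem:compactweightinput}, applied with $N=N_j$, $R\asymp H_j$ and $\zeta\asymp H_j^2$, then bounds $|W_{N_j}^0|$ by $C(\zeta+H_j^2)\le C_{c_0}R^2(1+j)^{2/3}$. Its failure probability is $CH_j^2e^{-cH_j^3}$ per block, since $\zeta^{3/2}\asymp H_j^3$. Two translation points need care. The upper bound $k-i\le 2N_j$ exceeds the range $N/2\le k-i\le N$ of that lemma, so I would apply it with $N=2N_j$, or split the pair at an intermediate geodesic point and use additivity along the geodesic. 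Stationarity also lets me shift to the block's coordinates. Summing over the $O(2^j)$ blocks and over $j$ again gives $Ce^{-cR^3}$.

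\textbf{Main obstacle.} The delicate step is checking that the parameter ranges are admissible uniformly over $j\in\cI_n$. This is presumably what dictates $R\le(c_0n^\beta)^{1/64}$. Lemma~\ref{sl:lem:compactweightinput} requires $H_j\le cN_j^{1/46}$ and $\zeta\le cN_j^{1/30}$, and the $(1+j)$ factors must be absorbed. Since $N_j\ge n^\beta$ forces $j\le\log_2 n$, I would bound $(1+j)\le C\log n$. I would then check that $R^2(\log n)^{2/3}\le cN_j^{1/30}$ follows from $R\le (c_0n^\beta)^{1/64}$ and $N_j\ge n^\beta$ for large $n$, with room to spare from the exponent $1/64$, and similarly for the \cite{GH23} geometric theorems. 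The remaining work is routine bookkeeping of the union bounds.
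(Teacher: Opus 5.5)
Your overall structure matches the paper's: reduce to one static time, take a union over $s\in\{0,t\}$, and treat confinement, transverse increments and weights separately. The weight step, however, takes a different route, and to a lesser extent so does the transverse step.

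\textbf{How the paper proceeds.} It never applies Lemma~\ref{sl:lem:compactweightinput} at intermediate scales. For each dyadic gap class $(n2^{-k-1},n2^{-k}]$ with $k\ge1$, it quotes the two moduli of continuity of \cite{GH23} directly: Theorem 1.4(1) for transverse increments and Theorem 1.6(1) for centered weights along the geodesic. These already carry the factors $(1+k)^{1/3}$ and $(1+k)^{2/3}$, already contain the union over locations, and fail with probability $Ce^{-cR^3k}$. Each range $[c_0N_j,2N_j]$ meets boundedly many classes, with $1+k\asymp_{c_0}1+j$.

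Global confinement comes from Corollary 1.5; your attributions of 1.4(1), 1.5 and 1.6(1) are permuted. Lemma~\ref{sl:lem:compactweightinput} enters only for gaps in $(n/2,n]$, where confinement already puts the endpoints in $|x-i|\le Rn^{2/3}$. The exponent $1/64$ comes from the hypothesis $r\le(\text{row separation})^{1/64}$ of Theorem 1.6(1), not from Lemma~\ref{sl:lem:compactweightinput}.

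\textbf{What your route buys.} You replace Theorem 1.6(1) by Lemma~\ref{sl:lem:compactweightinput} combined with geometric regularity at every scale. This needs only the milder constraints $H_j\le cN^{1/46}$ and $H_j^2\le cN^{1/30}$, which your check covers. The cost is more union bounds.

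\textbf{Missing unions.} Two of those unions are missing as written.

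First, stationarity justifies only deterministic shifts. The transverse position of $\Gamma$ in a row block is random, anywhere in $|x-i|\le Rn^{2/3}$. So ``translating the block to the origin'' must be replaced by a union over $O((n/N_j)^{2/3})$ deterministic horizontal placements of a parallelogram of width $O(H_jN_j^{2/3})$. This is a two-dimensional box grid, like the one in Section~\ref{sec:stability}. A single application with spatial parameter of order $R(n/N_j)^{2/3}$ would not work, because the $R^2$ term in the lemma's threshold would then be far too large.

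Second, $N=2N_j$ covers only gaps in $[N_j,2N_j]$. Gaps in $[c_0N_j,N_j)$ need further applications at $N=N_j,N_j/2,\dots$, a number bounded in terms of $c_0$. Splitting at an intermediate geodesic point only helps for long gaps.

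Both extra factors are polynomial in $2^j$ and are absorbed by $e^{-cH_j^3}=e^{-cC_{c_0}^3R^3(1+j)}$, so your argument closes once they are included.

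Likewise, your block union in the transverse step presupposes a single-window tail $Ce^{-cH^3}$ that is uniform in location. If you instead quote the simultaneous modulus of continuity, as the paper does, no union over blocks is needed.
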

\begin{proof}
\begingroup
After a fixed rescaling of its parameter, Corollary 1.5 of the
cited paper gives
\[
 \PP\!\left(\sup_{(x,i)\in\Gamma^s}|x-i|>Rn^{2/3}\right)
 \le Ce^{-cR^3}
\]
at each deterministic dynamical time $s$. This gives
\eqref{st:eq:globalconfinement}; the rescaling is absorbed into
$c,C$ and $R_0$.
\par\endgroup
For a dyadic row separation in $(n2^{-k-1},n2^{-k}]$, its
Theorems 1.4(1) and 1.6(1) bound transverse increments and
absolute {centered} weights by
$CR(n2^{-k})^{2/3}(1+k)^{1/3}$ and
$CR^2(n2^{-k})^{1/3}(1+k)^{2/3}$, respectively, with failure
probability $Ce^{-cR^3k}$ for $k\ge1$.
The change to raw coordinates only changes constants.
{For gaps at most $n/2$, each range
$[c_0N_j,2N_j]$ meets a bounded number of these dyadic classes,
depending only on $c_0$, with $1+k\asymp_{c_0}1+j$.
Summing their failure probabilities over $j$ costs at most
$Ce^{-cR^3}$.}

\begingroup
It remains to treat row gaps in $(n/2,n]$. Global confinement
{\eqref{st:eq:globalconfinement}} puts their endpoints in the strip {$|x-i|\le Rn^{2/3}$.}
Apply Lemma~\ref{sl:lem:compactweightinput} with $N=n$,
spatial parameter of order $R$, and $\zeta$ of order $R^2$.
Its failure probability is $CR^2e^{-cR^3}\le C'e^{-c'R^3}$,
and both terms in its weight threshold are of order $R^2$.
This gives \eqref{st:eq:weightreg} for these gaps, while global confinement gives \eqref{st:eq:georeg}.
\par\endgroup

\begingroup
We check the parameter ranges uniformly in the allowed values of $R$.
The weight estimate \cite[Theorem 1.6(1)]{GH23} explicitly requires
$r\le(nh_{1,2})^{1/64}$, where $nh_{1,2}$ is the row separation.
Since our separations are at least $c_0n^\beta$, this is ensured by
$R\le(c_0n^\beta)^{1/64}$. This bound also implies $R\le n^{1/10}$,
the restriction in the geometric estimates of Theorem 1.4(1) and
Corollary 1.5. The relevant dyadic indices satisfy
$2^k\le c_0^{-1}n^{1-\beta}$, so $2^k\le hn$ for large $n$.
Finally, $R=O(n^{\beta/64})$ and $R^2=O(n^{\beta/32})$ satisfy
the bounds on the spatial parameter and weight threshold in
Lemma~\ref{sl:lem:compactweightinput}, since
$\beta/64<1/46$ and $\beta/32<1/30$. Apply these static bounds
at both deterministic times and take their union.
\par\endgroup
\end{proof}
\par\endgroup
\par\endgroup
{We are now ready to prove Lemma~\ref{lem:fullreduction}.}
\begin{proof}[{Proof of Lemma~\ref{lem:fullreduction}}]
\begingroup
{Fix $\varepsilon>0$ and choose fixed $\delta,g\in(0,1/4)$
so that $2\delta+2g<\varepsilon/2$. These parameters will control
the losses from short thin excursions and from the boundary strips,
respectively. Let $t_n\in[0,n^{-1/3}]$ be any deterministic
sequence, and put $L_n=\lceil n^\beta\rceil$.}
The overlap $O_n(t_n)$ is a horizontal length, whereas
$D_{\rm long}(t_n)$ counts row durations. We first record the
horizontal-span bound used to relate these quantities.
\par\endgroup

\begingroup
\paragraph{\textbf{Horizontal spans and missing overlap.}}
\begingroup
Let $\mathcal R_n$ be the event on which, for both $s\in\{0,t_n\}$
and all ordered points $(x,i),(y,k)\in\Gamma^s$ {with $i,k\in\ZZ$},
\begingroup
\begin{equation}\label{eq:horizontalspan}
 y-x\le 2\bigl((k-i)\vee L_n\bigr),
 \qquad |x-i|\le n^{3/4}.
\end{equation}
\par\endgroup
\begingroup
{Lemma~\ref{st:lem:regularityinput}, applied with $R=\log n$ and
$c_0=1/16$, gives $\PP(\mathcal R_n)=1-o(1)$ uniformly in the
deterministic choice of $t_n\le n^{-1/3}$.}\footnote{{For a row gap
$d\ge L_n$, choose a retained scale with $d\in[N_j/2,N_j]$.
By \eqref{st:eq:georeg}, the transverse error divided by $d$
is at most $C(\log n)^{4/3}L_n^{-1/3}=o(1)$, so $y-x\le2d$.
For $d<L_n$, extend the segment along the same geodesic to row
duration $L_n$ and use monotonicity. Finally,
\eqref{st:eq:globalconfinement} gives
$|x-i|\le n^{2/3}\log n\le n^{3/4}$.}}
{The first bound in \eqref{eq:horizontalspan} converts row duration
into horizontal length; the second controls the loss near the two
ends of the full path. We first treat long excursions, then handle
short excursions according to thinness and proximity to the endpoint rows.}
\par\endgroup
\par\endgroup

For an excursion $E$ with endpoints $u=(u_x,i)$ and
$v=(v_x,k)$, denote the portions of the two paths between
these endpoints by $\Gamma_E^0$ and $\Gamma_E^{t_n}$.
Their total horizontal lengths are both $v_x-u_x$, since
their horizontal coordinates are nondecreasing. Hence, on
$\mathcal R_n$,
\[
 \sum_{E\,\mathrm{long}}
   |\Gamma_E^0\setminus\Gamma_E^{t_n}|_{\hor}
 \le \sum_{E\,\mathrm{long}}(v_x-u_x)
 \le {2}\sum_{E\,\mathrm{long}}(k-i)
 ={2}D_{\rm long}(t_n).
\]
This is the contribution of long excursions to the missing
horizontal overlap $n-O_n(t_n)$.
\par\endgroup

\begingroup
\paragraph{\textbf{Short non-thin excursions.}}
{Following \cite[Section 8.2]{GH24}, and using the fixed $\delta$
chosen at the start of the proof, we call an excursion with
lifetime $[i,k]$ $\delta$-thin}\footnote{{Thinness uses a fixed microscopic threshold at every row.
In contrast, slenderness, introduced in Section~\ref{sec:assembly},
requires separation below the scale-dependent width
$w_j=\theta_jN_j^{2/3}$ on most rows. Thinness controls missing
horizontal overlap directly, whereas slenderness describes
excursions that are narrow relative to their natural transversal scale.}} when
\[
 \max_{m\in\{i,\ldots,k-1\}}
       |\Gamma^{t_n}(m)-\Gamma^0(m)|<\delta.
\]
In \cite[Proposition 8.4]{GH24}, the fixed duration threshold
$\beta_2/8$ may be replaced by any fixed $d>0$, and the microscopic
thinness threshold may be any fixed positive constant. The same
proof applies, with constants depending on these choices. Thus the
{total row duration of short non-thin excursions, divided by
$n$, converges to zero in probability: for every fixed $\varepsilon>0$,
the probability that this duration exceeds $\varepsilon n$ tends
to zero.}

{To control horizontal length, we use a counting consequence of the
same proof. Let $M_n$ count the short excursions that are not
$\delta$-thin and whose lifetimes lie in $[gn,(1-g)n]$. We will
justify the following convergence in the next paragraph:}
\begin{equation}\label{eq:shortunlucky}
 {\frac{M_n}{n^{1-\beta}}\longrightarrow0\qquad\text{in probability}.}
\end{equation}

{By \cite[Lemma 10.2]{GH24}, with probability $1-o(1)$,
uniformly in $t_n$, every short excursion has transverse width
at most $Cn^{2\beta/3}(\log n)^{1/3}$ in original coordinates.
On this event, use witness separations between $\delta$ and this
upper bound, and set $\lambda=1/12$. Every excursion counted by
$M_n$ is then unlucky or normal. Roughly, the unlucky case admits
a separated static competitor of unusually small deficit, whereas
the normal case forces a definite deficit. For the precise definitions,
see \cite[Section 10, before Lemmas 10.3 and 10.4]{GH24}.
With these fixed choices of separation constants,
\cite[Lemma 10.3]{GH24} bounds the unlucky count}
by $n^{1-\lambda+o(1)}$ with probability $1-o(1)$. The result
{\cite[Lemma 10.5]{GH24}, with its fixed count coefficient
$2^{-5}\beta_2$ replaced by any $\varepsilon>0$ (the proof works
verbatim), shows that the number of normal excursions, divided by
$n^{1-\beta}$, converges to zero in probability.} Since $\beta<\lambda$ and
$\beta+\lambda<1/6$, this proves \eqref{eq:shortunlucky}.
{By \eqref{eq:horizontalspan}, on $\mathcal R_n$ each counted
excursion has horizontal span at most $2L_n$. Their total contribution
to $n-O_n(t_n)$ is therefore at most $2L_nM_n$. Since
$L_n=\lceil n^\beta\rceil$, \eqref{eq:shortunlucky} gives}
\[
 {\frac{2L_nM_n}{n}\longrightarrow0\qquad\text{in probability}.}
\]
\par\endgroup

\begingroup
\paragraph{\textbf{{Excursions that are short and thin.}}}
Fix a $\delta$-thin excursion $E$ from $u=(u_x,i)$ to
$v=(v_x,k)$. To describe the portions of the two geodesics
between $u$ and $v$, set, for $s\in\{0,t_n\}$,
\[
 z_{i-1}^s=u_x,\qquad
 z_m^s=\Gamma^s(m)\quad(i\le m<k),\qquad
 z_k^s=v_x.
\]
Their horizontal intervals on row $m$ are
$I_m^s=[z_{m-1}^s,z_m^s]$, for $i\le m\le k$.
Writing $\Delta_m=|z_m^0-z_m^{t_n}|$, the common endpoints
and thinness give
\[
 \Delta_{i-1}=\Delta_k=0,\qquad
 \Delta_m<\delta\quad(i\le m<k).
\]
The elementary interval inequality
$\Leb([a,b]\setminus[a',b'])\le |a-a'|+|b-b'|$ now gives
\[
 \begin{aligned}
 |\Gamma_E^0\setminus\Gamma_E^{t_n}|_{\hor}
 &=\sum_{m=i}^k\Leb(I_m^0\setminus I_m^{t_n})\\
 &\le\sum_{m=i}^k(\Delta_{m-1}+\Delta_m)
 =2\sum_{m=i}^{k-1}\Delta_m
 \le2\delta(k-i).
 \end{aligned}
\]
Consequently, all short thin excursions together contribute at most
\[
 \sum_{E\,\mathrm{short\ and\ thin}}
    |\Gamma_E^0\setminus\Gamma_E^{t_n}|_{\hor}
 \le2\delta\sum_{E\,\mathrm{short\ and\ thin}}d_E
 \le2\delta n
\]
to $n-O_n(t_n)$. This estimate is deterministic.
\par\endgroup

\paragraph{\textbf{{Boundary strips.}}}
The only short excursions still unaccounted for have lifetimes
\begingroup
not contained in $[gn,(1-g)n]$. Put
$m_-=\lceil gn\rceil$ and $m_+=\lfloor(1-g)n\rfloor$.
The total horizontal lengths of $\Gamma^0$ on rows $0,\ldots,m_-$
and $m_+,\ldots,n$ are respectively $\Gamma^0(m_-)$ and
$n-\Gamma^0(m_+-1)$, since the horizontal increments telescope.
The confinement bound included in $\mathcal R_n$ therefore
bounds their sum by $2gn+O(n^{3/4})$.
\par\endgroup

This bounds the loss from all excursions contained in these strips,
regardless of their number. Any remaining short excursion crosses
one of the two strip edges. Since lifetimes have disjoint interiors,
at most one crosses each edge, and $\mathcal R_n$ bounds their
combined horizontal span by $O(L_n)$.

\begingroup
{On $\mathcal R_n$, the preceding bounds give}
\[
 {1-\frac{O_n(t_n)}n
 \le 2\frac{D_{\rm long}(t_n)}n+2\delta+2g
       +\frac{2L_nM_n}{n}+O(n^{-1/4}+L_n/n).}
\]
{By \eqref{eq:shortunlucky}, the last two terms exceed
$\varepsilon/2$ with probability tending to zero. Since
{$2\delta+2g<\varepsilon/2$ and $\PP(\mathcal R_n^c)=o(1)$,
this proves \eqref{st:eq:GHreduction} with $C=2$.} All the estimates are
uniform over deterministic $t_n\in[0,n^{-1/3}]$, which gives
the supremum in the statement.}
\par\endgroup

\end{proof}

{In view of Lemma~\ref{lem:fullreduction}, to prove
Theorem~\ref{thm:main} it remains to show that}
\begin{equation}\label{st:eq:longtarget}
 \lim_{a\downarrow0}\limsup_{n\to\infty}
       \frac{\EE D_{\rm long}(a n^{-1/3})}{n}=0.
\end{equation}

\par\endgroup

\section{Uniform local passage-time stability}\label{sec:stability}
\begingroup
{The goal of this section is to make the passage-time stability
estimate of Lemma~\ref{lem:rotation} simultaneous over the random
endpoint pairs selected by excursions,
providing the local stability input in Figure~\ref{fig:argumentflow}.}
{For the physical dynamical time $t=an^{-1/3}$, the effective
elapsed time at scale $N_j$ is}
\begin{equation}\label{st:eq:tau}
 \tau_j=a(N_j/n)^{1/3}\asymp a2^{-j/3}.
\end{equation}
The same perturbation is therefore increasingly subcritical for
shorter segments. We retain this improvement while obtaining
uniformity over all their possible endpoints.
\par\endgroup

\begin{proposition}[Uniform local stability]\label{st:prop:local}
Fix \(0<\beta<1\) and \(c_0\in(0,1/8)\).
There are constants \(C,c,a_0>0\) such that, for each
\(0<a\le a_0\) and all sufficiently large \(n\), there is an
event \(\cE_{n,a}\) with
\begin{equation}\label{st:eq:goodprob}
 \PP(\cE_{n,a}^c)\le C\exp\{-c a^{-3/16}\}
\end{equation}
on which the following holds simultaneously: for every
\(j\in\cI_n\) and all \(u=(x,i),v=(y,k)\) on \(\Gamma^t\)
with {\(i,k\in\ZZ\)} and \(c_0N_j\le k-i\le2N_j\), 
\begin{equation}\label{st:eq:localstability}
 \abs{T_u^{v,t}-T_u^{v,0}}
 \le A_j:=C\tau_j^{1/4}N_j^{1/3}.
\end{equation}
The lower bound on \(n\) may depend on \(a,\beta,c_0\).
\end{proposition}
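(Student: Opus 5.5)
The plan is to reduce the simultaneous statement to a union bound over a deterministic discretisation of possible endpoints, using the pointwise tail \eqref{st:eq:pointtail} of Lemma~\ref{lem:rotation} together with static regularity to interpolate between grid points.

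\textbf{Step 1: localisation.} Take $R=a^{-1/16}$; with $a\le a_0$ this satisfies $R\ge R_0$, and $R\le(c_0n^\beta)^{1/64}$ for large $n$. Work on the event $\cG_{n,R}$ of Lemma~\ref{st:lem:regularityinput} applied at times $0$ and $t$, whose failure probability is $Ce^{-cR^3}=Ce^{-ca^{-3/16}}$. This fixes the exponent in \eqref{st:eq:goodprob}. On this event, every pair $u,v\in\Gamma^t$ with $c_0N_j\le k-i\le2N_j$ satisfies $|x-i|\le Rn^{2/3}$. Its transverse displacement relative to the diagonal is at most $H_jN_j^{2/3}$, and its horizontal length satisfies $L=y-x\le CN_j$.

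\textbf{Step 2: boxes and the union bound at scale $j$.} Partition the rows into blocks and the transverse coordinate into intervals of width $\eta_jN_j^{2/3}$, where $\eta_j$ is a small power of $\tau_j$. For the starting point $u$, there are about $n/N_j\cdot Rn^{2/3}/(\eta_jN_j^{2/3})$ candidate boxes. For each such box, the endpoint $v$ ranges over $O(H_j/\eta_j)$ boxes per row-block at distance $[c_0N_j,2N_j]$. Altogether the number of box pairs is polynomial in $2^j$, $R$, $(1+j)$ and $\eta_j^{-1}$. At grid corners, \eqref{st:eq:pointtail} with $L\le CN_j$ and $1-e^{-t}\le t$ gives
\[
 \PP\bigl(|T_u^{v,t}-T_u^{v,0}|>C\sqrt{N_jtw}\bigr)\le2e^{-w}.
\]
Since $\sqrt{N_jt}=\tau_j^{1/2}N_j^{1/3}$, choosing $w=\tau_j^{-1/2}$ makes the threshold $C\tau_j^{1/4}N_j^{1/3}=A_j/3$. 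The per-pair failure probability is therefore $2\exp(-c\tau_j^{-1/2})=2\exp(-ca^{-1/2}2^{j/6})$. This decays doubly exponentially in $j$ and dominates the polynomial count, because $\log$ of the count is $O(j+\log R+\log(1/\tau_j))$. The sum over $j\in\cI_n$ is then at most $C\exp(-ca^{-1/2})$, well below the target. One caveat is that the number of starting boxes involves $n/N_j\asymp 2^j$ and not $n$ itself, so nothing depends on $n$ beyond $j$.

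\textbf{Step 3: interpolation within boxes, which is the main obstacle.} The endpoints actually selected are arbitrary points of $\Gamma^t$, not grid corners. I would control $|T_u^{v,s}-T_{u'}^{v',s}|$ for $s\in\{0,t\}$ uniformly over $u,u'$ in a common box, with error at most $A_j/3$. This is a static modulus of continuity for passage times at scale $N_j$: a transverse shift of $\eta_jN_j^{2/3}$ costs about $\eta_j^{1/2}N_j^{1/3}$ up to logarithmic factors, and row shifts inside a short block cost similarly. To obtain it, I would invoke the uniform static bounds underlying Lemma~\ref{sl:lem:compactweightinput} and \cite{GH23} (Brownian-type regularity of the Airy-like weight profile in the endpoint), with a tail that is uniform over boxes and union-bounded in the same way. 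Choosing $\eta_j=\tau_j^{1/2}$ up to constants makes this error of order $\tau_j^{1/4}N_j^{1/3}$, matching $A_j$. To keep row-block effects small, rows inside a block are handled by the same weight regularity. Combining three terms gives \eqref{st:eq:localstability}: the grid-corner increment, and the two interpolation errors at times $0$ and $t$. If the required modulus bound is not directly available, the fallback is to use monotonicity and sandwiching of passage times under endpoint shifts, together with Brownian increments along a single row, to compare off-grid endpoints with grid ones.
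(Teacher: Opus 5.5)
Your Steps~1 and~2 follow the same plan as the paper. You localise on $\cG_{n,R}$ with $R\asymp a^{-1/16}$, which fixes the exponent $3/16$. You cover the endpoints by a deterministic family of boxes whose number at scale $j$ is polynomial in $2^j$ and $R$, and you let the decay $\tau_j\asymp a2^{-j/3}$ pay for that count. Your grid-corner bound with $w=\tau_j^{-1/2}$ is also correct.

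The gap is Step~3, which carries the real content of the proposition, and as set up it fails. With $\eta_j=\tau_j^{1/2}$, the tolerance $A_j/3\asymp\tau_j^{1/4}N_j^{1/3}$ is of the same order as the typical oscillation of a passage value over one box of transverse width $\eta_jN_j^{2/3}$. Each box therefore violates your interpolation bound with probability bounded below by a positive constant, and over the many boxes at scale $j$ the simultaneous bound genuinely fails; the logarithmic factors you mention are exactly this loss. The mesh must be a strictly higher power of $\tau_j$, so that the tolerance exceeds the oscillation scale by a negative power of $\tau_j$ and each box fails with stretched-exponentially small probability. The same applies to rows. Your count $n/N_j$ corresponds to row blocks of height $\asymp N_j$, across which passage values fluctuate by order $N_j^{1/3}\gg A_j$, so the blocks must have height a small power of $\tau_j$ times $N_j$. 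Finally, the comparison has to be made for centred values, or directly for the time difference. The raw increment $T_u^{v,s}-T_{u'}^{v',s}$ carries a deterministic drift of order $\eta_jN_j^{2/3}\gg A_j$, which cancels only between the two times.

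More fundamentally, you have not identified a source for the oscillation bound. Lemma~\ref{sl:lem:compactweightinput} only bounds $|W_N^0|$ at level $C(\zeta+R^2)$ with $\zeta\ge C$, i.e.\ at the full fluctuation scale, and says nothing at scale $\tau_j^{1/4}$. What is needed is a supremum over a continuum of endpoints with tails $e^{-c\tau_j^{-\alpha}}$. Its constants must be uniform over all box pairs, even though their chords have slopes $1+O(H_jN_j^{-1/3})$ and transverse offsets up to $H_jN_j^{2/3}$ with $H_j\to\infty$, outside any fixed compact KPZ region.

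The paper supplies this in Lemma~\ref{st:lem:compact}. It chains the difference field $D_N=F_N^{\tau N^{-1/3}}-F_N^0$ over nested dyadic grids in a fixed compact $K$. The first grid has mesh $\asymp\tau^3$, so that $\delta^{1/6}\asymp\tau^{1/2}$, and uses the pointwise bound $C\sqrt p\,\tau^{1/2}$ from Lemma~\ref{lem:rotation}. Finer levels are controlled by the two-point static bound $C_Kp^{4/3}|z-z'|^{1/6}$ of Lemma~\ref{st:lem:endpointinput}, applied at both times. This gives $\|\sup_K|D_N|\|_p\le C_Kp^{4/3}\tau^{1/2-12/p}$, and the choice $p\asymp\tau^{-3/16}$ yields the tail $Ce^{-c\tau^{-3/16}}$ at level $C\tau^{1/4}$. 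Uniformity comes from the exact horizontal scaling invariance of BLPP, applied along each box pair's chord, together with row bins of height $\asymp N_j/H_j$. These put every retained pair into one compact set (Lemma~\ref{st:lem:commoncompact}). With these ingredients your Step~3 becomes the paper's argument.
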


\begingroup
\begingroup
The parameter $c_0$ determines how many rows we discard near
each end of an excursion before choosing a witness row. In the
proof of Theorem~\ref{thm:main}, we take $c_0<\chi/32$, where
{$\chi$ is a constant appearing later in
Proposition~\ref{sl:prop:dynamic}.}
This ensures that the discarded rows leave enough witnesses to
obtain \eqref{eq:widepathwise}. Thus $c_0$ is fixed independently
of $n$ and $a$, but its choice later must allow for the value of $\chi$.
Constants in this section may depend on this fixed $c_0$.
\par\endgroup
\par\endgroup

\begingroup
Note that the fixed-endpoint scale in \eqref{st:eq:rotation} is
$\tau_j^{1/2}N_j^{1/3}$; here we allow the larger tolerance
$\tau_j^{1/4}N_j^{1/3}$. This leaves room for endpoint uniformity.
Since $\tau_j$ decreases geometrically, the resulting failure
probability improves rapidly enough to sum over all endpoint boxes
and all scales, as checked in \eqref{st:eq:scaleunion}. {This
gives the bound in terms of $a$ alone in \eqref{st:eq:goodprob},
uniformly for all sufficiently large $n$.}
\par\endgroup

\begingroup
{We first use Lemma~\ref{lem:rotation} on a fine mesh of
endpoint pairs covering one compact family. The mesh is chosen
so that the static oscillation between nearby endpoints is
small compared with the dynamical tolerance. {Keeping track of
how the moment bounds grow with $p$ allows us to choose $p$ as a
function of the elapsed time and obtain a stretched-exponential
bound for the maximum. We apply this comparison to deterministic
endpoint boxes and sum over all retained boxes and scales.
The resulting simultaneous event also covers the boxes containing
the endpoints selected by the geodesic.}}
\par\endgroup

\subsection{Uniform stability in a compact endpoint family}

\begingroup
{Fix a closed coordinate box
{$K\subset\RR^4_\uparrow$.} We use the field
$F_N^h(z)$ from Section~\ref{sec:outightness}, including its
separately linear interpolation in the two row coordinates.
Lemma~\ref{lem:rotation} and the deterministic interpolation
weights give, for $z\in K$ and all sufficiently large $N$,}
\begin{equation}\label{st:eq:pointmoment}
 \norm{F_N^{\tau N^{-1/3}}(z)-F_N^0(z)}_p
 \le C_K\sqrt p\,\tau^{1/2},\qquad p\ge2,\quad 0<\tau\le1.
\end{equation}
At a fixed endpoint pair, this difference becomes small with
$\tau$. Further, the static estimate \eqref{st:eq:endpointmoment} controls
its oscillation when the endpoints vary. We combine the two by
starting with an endpoint mesh of width $\delta\asymp\tau^3$,
so that $\delta^{1/6}\asymp\tau^{1/2}$, and then controlling
the fluctuations between mesh points by successively finer meshes.

The lemma includes the translations and slopes needed for the
endpoint boxes below. For a deterministic origin $o=(x_o,i_o)$,
where $i_o\in\ZZ$, and a slope $\lambda>0$, set
\[
 \Phi_{N;o,\lambda}(x,s)
 =\bigl(x_o+\lambda(Ns+2N^{2/3}x),\ i_o+Ns\bigr).
\]
\begingroup
On the row grid $s,t\in N^{-1}\ZZ$, define
\[
 D_{N;o,\lambda}^{\tau}(x,s;y,t)
 =\frac{
 T_{\Phi_{N;o,\lambda}(x,s)}^{\Phi_{N;o,\lambda}(y,t),\,\tau N^{-1/3}}
 -T_{\Phi_{N;o,\lambda}(x,s)}^{\Phi_{N;o,\lambda}(y,t),\,0}
 }{\sqrt\lambda\,N^{1/3}},
\]
\endgroup
and interpolate linearly in the two row coordinates. For
$o=(0,0)$ and $\lambda=1$, this is
$F_N^{\tau N^{-1/3}}-F_N^0$; the deterministic centering cancels.

\begin{lemma}[Compact endpoint comparison]\label{st:lem:compact}
{Fix a closed coordinate box $K\subset\RR^4_\uparrow$.}
There are constants $C_K,c_K,N_K>0$
such that, for every deterministic $o\in\RR\times\ZZ$,
$\lambda>0$, $N\ge N_K$, $0<\tau\le1$ and $p\ge48$,
\begin{equation}\label{st:eq:compactmoment}
 \norm{\sup_{z\in K}|D_{N;o,\lambda}^{\tau}(z)|}_p
 \le C_Kp^{4/3}\tau^{1/2-12/p}.
\end{equation}
Consequently,
\begin{equation}\label{st:eq:compacttail}
 \PP\left(\sup_{z\in K}|D_{N;o,\lambda}^{\tau}(z)|
              >C_K\tau^{1/4}\right)
 \le C_K\exp\{-c_K\tau^{-3/16}\}.
\end{equation}
The constants are uniform in the deterministic origin and slope;
the probability bound is for each such choice.
\end{lemma}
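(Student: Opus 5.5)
The plan is to reduce to the standard field by symmetry, combine a pointwise dynamical moment bound with the static endpoint H\"older bound through a multiscale chaining argument, and then convert the moment bound \eqref{st:eq:compactmoment} into the tail \eqref{st:eq:compacttail} by Markov's inequality with $p$ depending on $\tau$.

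\textbf{Step 1: reduction to $o=(0,0)$, $\lambda=1$.} As in the proof of Lemma~\ref{lem:rotation}, consider the translated and rescaled environment
\[
 \widetilde B_j^r(x)=\lambda^{-1/2}\bigl(B_{i_o+j}^r(x_o+\lambda x)-B_{i_o+j}^r(x_o)\bigr).
\]
It has the same stationary OU law, jointly in dynamical time. Staircases map bijectively under $(x,m)\mapsto(x_o+\lambda x,i_o+m)$, and weights scale by $\sqrt\lambda$. Hence the process $z\mapsto D^\tau_{N;o,\lambda}(z)$ has the same law as $z\mapsto F_N^{\tau N^{-1/3}}(z)-F_N^0(z)$ on the row grid. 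The row-coordinate interpolations also agree, since the interpolation weights are deterministic. So it suffices to treat $D:=F_N^{\tau N^{-1/3}}-F_N^0$, and the constants are automatically uniform in $(o,\lambda)$. I also enlarge $K$ slightly to a box $K^+$, so that Lemma~\ref{st:lem:endpointinput} applies on it for $N\ge N_K$.

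\textbf{Step 2: two moment inputs.} The first is the pointwise bound \eqref{st:eq:pointmoment}: $\|D(z)\|_p\le C_K\sqrt p\,\tau^{1/2}$. The second comes from applying Lemma~\ref{st:lem:endpointinput} at the two deterministic times $\tau N^{-1/3}$ and $0$:
\[
 \|D(z)-D(z')\|_p\le 2C_Kp^{4/3}|z-z'|^{1/6}.
\]

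\textbf{Step 3: chaining.} Let $\delta_k=\tau^3 2^{-k}$ for $k\ge0$, and let $G_k\subset K$ be a $\delta_k$-net of cardinality at most $C_K\delta_k^{-4}$. For $z\in K$, choose $\pi_k(z)\in G_k$ with $|z-\pi_k(z)|\le\delta_k$.

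The fields are continuous in $z$, as noted in Section~\ref{sec:outightness}. Hence
\[
 \sup_K|D|\le\max_{G_0}|D|+\sum_{k\ge0}\max_{z}|D(\pi_{k+1}z)-D(\pi_kz)|,
\]
and the maximum at level $k$ ranges over at most $C_K\delta_{k+1}^{-8}$ pairs, each at distance at most $2\delta_k$.

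To bound a maximum of $M$ variables in $L^p$, I use $\|\max_{i\le M}|Y_i|\|_p\le M^{1/p}\max_i\|Y_i\|_p$. This gives:
\begin{itemize}
 \item[] at level $0$: $C_K\tau^{-12/p}\sqrt p\,\tau^{1/2}$;
 \item[] at level $k$: $C_Kp^{4/3}\delta_k^{1/6-8/p}$.
\end{itemize}
For $p\ge48$ the exponent $1/6-8/p$ is at least $0$, and for $p>48$ it is strictly positive. I will take $\delta_k$ decaying as $\tau^3 4^{-k}$ if needed, or simply absorb the counting via $\delta_k^{-4}$ nets with $p\ge48$, which gives exponent $1/6-4/p\ge1/12$. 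The geometric sum is then of order
\[
 p^{4/3}\tau^{3(1/6-4/p)}=p^{4/3}\tau^{1/2-12/p}.
\]
This yields \eqref{st:eq:compactmoment}. The bookkeeping of net cardinalities against $p$ is where the condition $p\ge48$ enters, and it is the step I would check most carefully.

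\textbf{Step 4: the tail.} By Markov's inequality,
\[
 \PP\Bigl(\sup_K|D|>C\tau^{1/4}\Bigr)\le\bigl(C'p^{4/3}\tau^{1/4-12/p}/C\bigr)^p.
\]
Choose $p=c\,\tau^{-3/16}$ with $c$ small. Then $p^{4/3}\tau^{1/4}=c^{4/3}$. Moreover $\tau^{-12/p}=\exp\{12\log(1/\tau)/p\}$ stays bounded, because $p$ grows polynomially in $1/\tau$ while the exponent involves only $\log(1/\tau)$. So the base is at most $e^{-1}$, and the bound is $e^{-p}=\exp\{-c\tau^{-3/16}\}$.

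For $\tau$ bounded below, where this choice would give $p<48$, I instead take $p=48$ and enlarge the constants. This also covers the requirement $p\ge48$.

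\textbf{Main obstacle.} The substantive points are two: tracking the $p$-dependence through the union over nets, so that the polynomial factor $p^{4/3}$ remains compatible with the stretched-exponential choice of $p$; and the exact distributional symmetry of Step~1, which is what makes all constants uniform in the origin and slope.
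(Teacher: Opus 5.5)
Your route is the paper's: the same translation--scaling reduction, a first mesh of size $\tau^3$ so that $\delta^{1/6}\asymp\tau^{1/2}$, chaining with the static increment bound, and Markov's inequality with $p\asymp\tau^{-3/16}$ (the paper takes $p=\max\{48,\lceil\tau^{-3/16}\rceil\}$). The gap is in the counting of Step~3, which is where you suspected it would be.

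With nearest points $\pi_k(z)$ chosen independently in unrelated nets, your level-$k$ maximum runs over up to $|G_k||G_{k+1}|\asymp\delta_k^{-8}$ pairs. The level-$k$ bounds are therefore $C_Kp^{4/3}\delta_k^{1/6-8/p}$. This causes three problems:
\begin{itemize}
\item At $p=48$ the exponent $1/6-8/p$ vanishes, so these bounds are constant in $k$ and their sum diverges.
\item Just above $48$ the sum carries a factor of order $(1/6-8/p)^{-1}$, so the constant is not uniform in $p\ge48$.
\item In any case you only obtain $\tau^{3(1/6-8/p)}=\tau^{1/2-24/p}$, not $\tau^{1/2-12/p}$.
\end{itemize}
Letting $\delta_k$ decay like $\tau^34^{-k}$ does not help, because convergence is governed by the sign of $1/6-8/p$, not by how fast $\delta_k$ decreases. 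Your other suggestion, absorbing the count ``via $\delta_k^{-4}$ nets'', names the right target but conflates the size of a net with the number of pairs in the maximum.

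The missing ingredient is a chain in which each finer point determines its coarser predecessor, so that each level contributes only $O(\delta_{k+1}^{-4})$ differences. The paper does this as follows:
\begin{itemize}
\item It uses nested dyadic grids $\mathcal D_k$, cutting each coordinate interval of $K$ into $2^k$ pieces.
\item It defines $\pi_{k-1}\colon\mathcal D_k\to\mathcal D_{k-1}$ by coordinatewise rounding down.
\item It chains $z$ through its lower grid approximations $z_k$, which satisfy $\pi_{k-1}z_k=z_{k-1}$.
\end{itemize}
The level-$k$ differences are then indexed by $\mathcal D_k$ alone, so there are at most $C2^{4k}$ of them. This gives terms $C_Kp^{4/3}2^{-k(1/6-4/p)}$ with $1/6-4/p\ge1/12$ for $p\ge48$. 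Summing over levels finer than the first mesh gives $C_Kp^{4/3}\delta^{1/6-4/p}\le C_Kp^{4/3}\tau^{1/2-12/p}$.

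Alternatively, you could keep regular grids of mesh comparable to $\delta_k$ but take the maximum only over pairs at distance at most $2\delta_k$. Each point of $G_{k+1}$ has only $O(1)$ such partners in $G_k$, which gives the same count.

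With either repair your Step~4 goes through unchanged. Your crude count would still give the tail bound \eqref{st:eq:compacttail} if you restrict to, say, $p\ge96$, since $\tau^{-24/p}$ stays bounded when $p\asymp\tau^{-3/16}$. It is the moment bound \eqref{st:eq:compactmoment}, in its stated form, that needs the sharper count.
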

\par\endgroup

\begin{proof}
\begingroup
The translated and scaled Brownian motions
\[
 \widetilde B_\ell^h(x)=\lambda^{-1/2}
 \bigl(B_{i_o+\ell}^h(x_o+\lambda x)-B_{i_o+\ell}^h(x_o)\bigr)
\]
have the same stationary OU law, with unchanged dynamical time.
Passage values in the original environment are $\sqrt\lambda$
times those in this environment. Hence it suffices to take
$o=(0,0)$ and $\lambda=1$; write $D_N=D_{N;(0,0),1}^{\tau}$.
\par\endgroup

\begingroup
Write $K=\prod_{r=1}^4[a_r,b_r]$. For each integer $k\ge0$,
let $\mathcal D_k$ be the grid obtained by dividing each
coordinate interval into $2^k$ equal pieces, including its
endpoints. These grids are nested, have at most $C2^{4k}$
points, and have Euclidean mesh size at most $C_K2^{-k}$.
Choose
\[
 k_* =\left\lceil\log_2(\tau^{-3})\right\rceil,
 \qquad \delta=2^{-k_*},\qquad
 \tfrac12\tau^3<\delta\le\tau^3.
\]
The symbol $k_*$ only records the first mesh level, chosen so
that its endpoint-increment scale $\delta^{1/6}$ is comparable
to the fixed-endpoint dynamical scale $\tau^{1/2}$.
The maximum over this first grid satisfies
\[
 \begin{aligned}
 \left\|\max_{z\in\mathcal D_{k_*}}|D_N(z)|\right\|_p
 &\le |\mathcal D_{k_*}|^{1/p}
       \max_{z\in\mathcal D_{k_*}}\|D_N(z)\|_p\\
 &\le C_K\sqrt p\,\delta^{-4/p}\tau^{1/2}
 \le C_K\sqrt p\,\tau^{1/2-12/p},
 \end{aligned}
\]
by \eqref{st:eq:pointmoment}.

This grid alone does not control points between its vertices.
We use finer grids to bound that remaining oscillation.
For $z\in\mathcal D_k$, let $\pi_{k-1}z\in\mathcal D_{k-1}$
be obtained by rounding each coordinate down to the preceding
coarser grid point. Then $|z-\pi_{k-1}z|\le C_K2^{-k}$.
Applying \eqref{st:eq:endpointmoment} {from Lemma~\ref{st:lem:endpointinput}} at both dynamical times gives
\[
 \|D_N(z)-D_N(\pi_{k-1}z)\|_p
 \le C_Kp^{4/3}2^{-k/6}.
\]
Since there are at most $C2^{4k}$ such differences,
\[
 \left\|\max_{z\in\mathcal D_k}
      |D_N(z)-D_N(\pi_{k-1}z)|\right\|_p
 \le C_Kp^{4/3}2^{-k(1/6-4/p)}.
\]
For any $z\in K$, let $z_k\in\mathcal D_k$ be its coordinatewise
lower grid approximation. These choices satisfy
$\pi_{k-1}z_k=z_{k-1}$ and $z_k\to z$. Continuity therefore
gives the telescoping bound
\[
 \sup_{z\in K}|D_N(z)-D_N(z_{k_*})|
 \le\sum_{k>k_*}\max_{w\in\mathcal D_k}
              |D_N(w)-D_N(\pi_{k-1}w)|.
\]
Since $p\ge48$ implies $1/6-4/p\ge1/12$, Minkowski's
inequality and the convergent geometric sum yield
\[
 \begin{aligned}
 \left\|\sup_{z\in K}|D_N(z)-D_N(z_{k_*})|\right\|_p
 &\le C_Kp^{4/3}\sum_{k>k_*}2^{-k(1/6-4/p)}\\
 &\le C_Kp^{4/3}\delta^{1/6-4/p}
 \le C_Kp^{4/3}\tau^{1/2-12/p}.
 \end{aligned}
\]
All constants are uniform in $p\ge48$. Adding the first-grid
bound proves \eqref{st:eq:compactmoment}. This refinement of
the mesh controls a supremum over a continuum; the interpolation
in the row coordinates has already been included in the
endpoint estimate \eqref{st:eq:endpointmoment}.
\par\endgroup

For the tail bound, take
\(p=\max\{48,\lceil\tau^{-3/16}\rceil\}\). Then
\(p^{4/3}\le C\tau^{-1/4}\), and
\[
 \tau^{-12/p}
 \le \exp\{12\tau^{3/16}\log(1/\tau)\}\le C.
\]
Thus the \(L^p\) norm in \eqref{st:eq:compactmoment}
is at most \(C_K\tau^{1/4}\). \begingroup
Markov's inequality gives
\[
 \begin{aligned}
 \PP\!\left(\sup_{z\in K}|D_N(z)|>eC_K\tau^{1/4}\right)
 &\le \frac{\EE\!\left[\bigl(\sup_{z\in K}|D_N(z)|\bigr)^p\right]}
              {(eC_K\tau^{1/4})^p}\\
 &\le e^{-p}\le\exp\{-\tau^{-3/16}\}.
 \end{aligned}
\]
Relabelling the constants proves \eqref{st:eq:compacttail}.
\par\endgroup
\end{proof}

The stretched-exponential bound uses the explicit growth of
the moment constants, rather than only their finiteness for
each fixed order. The exponent \(3/16\) comes from balancing
\(p^{4/3}\tau^{1/2}\) with the allowed error
\(\tau^{1/4}\); we do not attempt to {optimise} it here. The resulting
failure probability decreases rapidly enough to accommodate
the growing number of endpoint boxes at smaller scales.

\subsection{The endpoint boxes and the union over scales}

\begingroup
{To prove Proposition~\ref{st:prop:local}, we will obtain a
probability bound of the form}
\[
 \PP(\cE_{n,a}^c)
 \le Ce^{-cR^3}
       +C\sum_{j\in\cI_n}M_j e^{-c\tau_j^{-3/16}},
\]
where the first term is the cost of geodesic regularity and $M_j$
will count a deterministic collection of endpoint-box pairs at
scale $N_j$. The geometry below has two purposes: to bound $M_j$
by a polynomial in $2^j$, and to place every retained box pair
in the same compact endpoint family after translation and scaling.
Lemma~\ref{st:lem:compact} will then give the exponential term
for each pair, with uniform constants. The calculation is completed
in \eqref{st:eq:scaleunion}, where the increasingly strong decay
at smaller scales pays for the number of possible locations.
\par\endgroup

{Lemma~\ref{st:lem:compact} gives uniform passage-time stability
between deterministic endpoint boxes whose rescaled coordinates
lie in a fixed compact family. We will apply it to boxes at
longitudinal scale \(N\) and transverse scale \(N^{2/3}\).} A long geodesic may enter many
such boxes, and its local chord need not have slope exactly
one. {We use Lemma~\ref{st:lem:regularityinput} to limit the
possible box pairs.} {We then translate and rescale each retained pair, using the line
joining its box centers as the reference direction.} Figure~\ref{fig:endpointboxes} illustrates
the geometry. The estimates are applied to every retained
pair, before the geodesic selects any of them.

\paragraph{\textbf{A deterministic cover.}}
\begingroup
On the event $\cG_{n,R}$ of Lemma~\ref{st:lem:regularityinput},
whose probability is at least $1-Ce^{-cR^3}$, we will cover every
possible pair of endpoints selected from the geodesic {on integer rows} by a finite
deterministic list of box pairs. Lemma~\ref{st:lem:compact} applies
to fixed regions in KPZ coordinates, whereas
Lemma~\ref{st:lem:regularityinput} allows a transverse increment
of size $H_jN_j^{2/3}$, with $H_j$ growing with $j$.
{We choose the boxes so that, for each retained pair, translation
and scaling relative to the chord between their centers puts every
endpoint pair in one fixed compact set $K$, independent of the
scale and the box pair. Lemma~\ref{st:lem:compact} then applies
with the same constants to all retained pairs.}

Each box has transverse width $N_j^{2/3}$ and a smaller row
width, denoted by $\ell_j$, of order $N_j/H_j$.
The smaller row width ensures that a box still has transverse
width $O(N_j^{2/3})$ when measured relative to the chord joining
the two box centers. {Figure~\ref{fig:endpointboxes} illustrates
these boxes and the change of reference chord, which is verified
in Lemma~\ref{st:lem:commoncompact}.}

Recall from \eqref{st:eq:scales} that
$N_j=\lfloor n2^{-j}\rfloor$ and
$\cI_n=\{j\ge0:N_j\ge n^\beta\}$. Fix $j\in\cI_n$.
For a point $(x,i)$, write $X=x-i$ for its horizontal
displacement from the diagonal. Define
\[
 \ell_j=\left\lfloor\frac{c_0N_j}{16H_j}\right\rfloor,
 \qquad I_{j,r}=[r\ell_j,(r+1)\ell_j)\cap\ZZ,
 \qquad J_{j,s}=[sN_j^{2/3},(s+1)N_j^{2/3}),
\]
for $r,s\in\ZZ$, and
\[
 Q_{j,r,s}=\{(x,i):i\in I_{j,r},\ x-i\in J_{j,s}\}.
\]
Thus $I_{j,r}$ specifies a row bin and $J_{j,s}$ a bin for
the transverse coordinate $X$. For fixed $R$, the bounds
$N_j\ge n^\beta$ and $H_j\le C_{c_0}R(1+\log n)^{1/3}$
ensure that $c_0N_j/(16H_j)\ge2$ at every retained scale
for all sufficiently large $n$. In particular, $\ell_j\ge1$
and $\ell_j\asymp_{c_0}N_j/H_j$.
{For such $n$, the sets $I_{j,r}$ partition the rows $\ZZ$,
and the half-open intervals $J_{j,s}$ partition the transverse
coordinate line $\RR$. Consequently, the boxes $Q_{j,r,s}$,
$r,s\in\ZZ$, partition $\RR\times\ZZ$: each point $(x,i)$
belongs to exactly one box.}

Let $\mathfrak B_j$ be the set of ordered pairs
$b=(Q_b^-,Q_b^+)$ of these boxes that contain at least one
pair $(x,i)\in Q_b^-$, $(y,k)\in Q_b^+$ satisfying
\begingroup
\begin{equation}\label{st:eq:boxcompatibility}
 \begin{gathered}
 0\le i,k\le n,\qquad c_0N_j\le k-i\le2N_j,\\
 \max\{|x-i|,|y-k|\}\le Rn^{2/3},\qquad
 |(y-k)-(x-i)|\le H_jN_j^{2/3}.
 \end{gathered}
\end{equation}
The transverse bounds are those in \eqref{st:eq:globalconfinement}
and \eqref{st:eq:georeg} of Lemma~\ref{st:lem:regularityinput}.
\par\endgroup
The boxes and this compatibility test are deterministic.
In particular, $b$ denotes a \emph{pair of boxes}, not a pair
of points. Put $M_j=|\mathfrak B_j|$.

\begin{lemma}[Covering and counting endpoint-box pairs]\label{st:lem:boxcover}
For fixed $R$ and sufficiently large $n$, on the event
$\cG_{n,R}$ of Lemma~\ref{st:lem:regularityinput}, every
$u=(x,i),v=(y,k)\in\Gamma^t$ with {$i,k\in\ZZ$} and
$c_0N_j\le k-i\le2N_j$ belongs to a product
$Q_b^-\times Q_b^+$ with $b\in\mathfrak B_j$.
Moreover,
\begin{equation}\label{st:eq:boxcount}
 {M_j\le C_{c_0}R^2H_j^2(n/N_j)^{10/3}
     \le C_{c_0}R^4(1+j)^{2/3}2^{10j/3}.}
\end{equation}
\end{lemma}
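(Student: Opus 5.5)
The covering assertion is essentially a tautology once we use the partition property. On $\cG_{n,R}$, take $u=(x,i)$ and $v=(y,k)$ on $\Gamma^t$ with $i,k\in\ZZ$ and $c_0N_j\le k-i\le2N_j$. Since the boxes $Q_{j,r,s}$ partition $\RR\times\ZZ$, the point $u$ lies in a unique box $Q^-$ and $v$ in a unique box $Q^+$. It remains to check that $(u,v)$ witnesses \eqref{st:eq:boxcompatibility}, which then forces $(Q^-,Q^+)\in\mathfrak B_j$.

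The conditions are verified as follows. The rows satisfy $0\le i,k\le n$ because $\Gamma^t$ runs from $\mathbf 0$ to $\mathbf n$. The row gap lies in the prescribed range by hypothesis. The two transverse bounds are exactly \eqref{st:eq:globalconfinement} and \eqref{st:eq:georeg} with $s=t$, both of which hold on $\cG_{n,R}$. No randomness enters beyond membership in $\cG_{n,R}$, since the family $\mathfrak B_j$ is deterministic.

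For the count, I bound separately the number of choices for $Q^-$ and then, given $Q^-$, the number of compatible $Q^+$.

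\textbf{Choices of $Q^-$.} The row index $r$ ranges over those bins meeting $[0,n]$, giving at most $n/\ell_j+1\le C_{c_0}H_jn/N_j$ choices, using $\ell_j\asymp_{c_0}N_j/H_j$. The transverse index $s$ must satisfy $J_{j,s}\cap[-Rn^{2/3},Rn^{2/3}]\neq\emptyset$, which allows at most $2Rn^{2/3}/N_j^{2/3}+2\le CR(n/N_j)^{2/3}$ choices.

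\textbf{Choices of $Q^+$ given $Q^-$.} The row of $v$ lies in $[i+c_0N_j,\,i+2N_j]$, a window of length $2N_j$, which meets at most $2N_j/\ell_j+2\le C_{c_0}H_j$ row bins. The transverse coordinate of $v$ differs from that of some point of $Q^-$ by at most $H_jN_j^{2/3}$. Since $Q^-$ has transverse width $N_j^{2/3}$, this allows at most $2H_j+3$ transverse bins. Alternatively, one can simply reuse the global bound $CR(n/N_j)^{2/3}$ here, which is what the stated exponent suggests.

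Multiplying the four factors gives
\[
 M_j\le C_{c_0}\,H_j\frac{n}{N_j}\cdot R\Bigl(\frac n{N_j}\Bigr)^{2/3}\cdot H_j\cdot R\Bigl(\frac n{N_j}\Bigr)^{2/3}
 =C_{c_0}R^2H_j^2\Bigl(\frac n{N_j}\Bigr)^{7/3}.
\]
This is at most the stated $(n/N_j)^{10/3}$ bound because $n/N_j\ge1$. The sharper local transverse count of $O(H_j)$ would give an even smaller bound, and either version suffices.

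For the second inequality, substitute $H_j=C_{c_0}R(1+j)^{1/3}$, so that $H_j^2=C_{c_0}^2R^2(1+j)^{2/3}$. Also $n/N_j\le 2\cdot2^j$ since $N_j=\lfloor n2^{-j}\rfloor\ge n2^{-j}/2$ when $N_j\ge n^\beta\ge1$. Together these give $M_j\le C_{c_0}R^4(1+j)^{2/3}2^{10j/3}$.

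The only delicate point is bookkeeping the conditions ``sufficiently large $n$'' that make $\ell_j\ge1$ and $\ell_j\asymp N_j/H_j$. These were already checked before the lemma, and I would simply cite them there.
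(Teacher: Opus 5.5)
Your proof is correct and follows the paper's. The covering step uses the partition of $\RR\times\ZZ$ by the boxes together with \eqref{st:eq:globalconfinement} and \eqref{st:eq:georeg} on $\cG_{n,R}$, and the count bounds the numbers of row and transverse bins. The only difference is in the second box. The paper squares the single-box count $C_{c_0}(nH_j/N_j)\cdot R(n/N_j)^{2/3}$ over all ordered pairs, which is exactly where the exponent $10/3$ comes from. You instead use the row-gap constraint to reduce the row choices for $Q^+$ to $O(H_j)$ bins, which is valid but gives the unneeded sharper $(n/N_j)^{7/3}$.
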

\begin{proof}
{Since the boxes $Q_{j,r,s}$ partition $\RR\times\ZZ$, there
are unique boxes $Q_b^-$ and $Q_b^+$ containing $u=(x,i)$ and
$v=(y,k)$, respectively. The hypotheses on $u,v$ give the row
bounds in \eqref{st:eq:boxcompatibility}. On $\cG_{n,R}$,
\eqref{st:eq:globalconfinement} and \eqref{st:eq:georeg} give
the two transverse bounds. Thus $(u,v)$ satisfies
\eqref{st:eq:boxcompatibility}, so $b=(Q_b^-,Q_b^+)$ belongs
to $\mathfrak B_j$.}

\begingroup
For either endpoint, there are at most $Cn/\ell_j$
choices of a row bin meeting $[0,n]$, and at most
$CR(n/N_j)^{2/3}$ choices of a transverse bin meeting
{$[-Rn^{2/3},Rn^{2/3}]$.} Counting all ordered pairs of
these boxes therefore gives
\[
 M_j\le
 \left(C_{c_0}\frac{nH_j}{N_j}\,
             R\left(\frac n{N_j}\right)^{2/3}\right)^2
 \le C_{c_0}R^2H_j^2(n/N_j)^{10/3}.
\]
Using $H_j=C_{c_0}R(1+j)^{1/3}$ and
$n/N_j\asymp2^j$ proves \eqref{st:eq:boxcount}.
{This is an overcount: it includes box pairs containing no
endpoint pair that satisfies \eqref{st:eq:boxcompatibility}.
Only box pairs that do contain such a pair, namely those in
$\mathfrak B_j$, enter the stability event.}
\par\endgroup
\end{proof}
One application of Lemma~\ref{st:lem:compact} will control
all point pairs in
$Q_b^-\times Q_b^+$, so the union bound pays for $M_j$ box
pairs, with no further count of the points inside them.
\par\endgroup

\begin{figure}[tbp]

\centering
\begin{tikzpicture}[x=1cm,y=.95cm]
 \node[anchor=west] at (-.2,4.65) {\textbf{(a)} A retained pair of boxes};
 \filldraw[fill=pathblue!8,draw=pathblue!55]
   (.4,0)--(1.8,0)--(2.3,.8)--(.9,.8)--cycle;
 \filldraw[fill=pathblue!8,draw=pathblue!55]
   (2.9,3)--(4.3,3)--(4.8,3.8)--(3.4,3.8)--cycle;
 
 \node[anchor=east] at (.25,.15) {$Q_b^-$};
 \node[anchor=east] at (2.8,3.6) {$Q_b^+$};
 \draw[guide] (1.35,.4)--(3.85,3.4);
 \fill (1.35,.4) circle (1.7pt);
 \fill (3.85,3.4) circle (1.7pt);
 \node[anchor=east,text=black] (originlabel) at (.3,1.35) {$o_b$};
 \draw[->,black!65,line width=.4pt] (originlabel.east)--(1.3,.45);

 \node[rotate=50,fill=white,inner sep=1pt,font=\footnotesize]
   at (2.65,2) {chord slope $\lambda_b$};
 \draw[dynamicpath] (1.85,.65)--(2.7,.65)--(2.7,1.5)
   --(3.6,1.5)--(3.6,2.6)--(4.1,2.6)--(4.1,3.6);
 \fill[pathorange] (1.85,.65) circle (2pt) node[above] {$u$};
 \fill[pathorange] (4.1,3.6) circle (2pt) node[above] {$v$};
 \draw[<->] (.4,-.25)--node[below,font=\footnotesize] {$N_j^{2/3}$}(1.8,-.25);
 \draw[<->] (4.95,3)--node[right,font=\footnotesize] {$N_j/H_j$}(4.95,3.8);
 \node[align=center,text width=4.8cm,font=\footnotesize] at (2.4,-1.35)
   {$b=(Q_b^-,Q_b^+)$ deterministic;\\$u,v$ selected by the path.};
 \draw[->,line width=.8pt] (5.4,1.9)--(6.4,1.9);
 \node[align=center,font=\footnotesize] at (5.9,2.65)
   {translate\\and scale};
 \begin{scope}[xshift=7cm]
  \node[anchor=west] at (-.2,4.65) {\textbf{(b)} One compact endpoint family};
  \filldraw[fill=pathblue!8,draw=pathblue!55] (.6,0) rectangle (3,.8);
  \filldraw[fill=pathblue!8,draw=pathblue!55] (.6,3) rectangle (3,3.8);
  \draw[guide] (1.8,0)--(1.8,3.8);
  \fill[pathorange] (1,.35) circle (2pt);
  \fill[pathorange] (2.5,3.6) circle (2pt);
  \draw[<->] (.6,-.25)--node[below,font=\footnotesize] {$O(1)$}(3,-.25);
  \draw[<->] (3.35,.8)--node[right,align=left,font=\footnotesize]
    {time gap\\bounded below}(3.35,3);
  \node[align=center,text width=4.8cm,font=\footnotesize] at (2,-1.35)
    {The comparison holds for\\every pair in the boxes.};
 \end{scope}
\end{tikzpicture}
\caption{The deterministic endpoint cover used for local stability.
On the left, row bins of width comparable to \(N_j/H_j\) and
bins of width \(N_j^{2/3}\) in the coordinate \(x-i\) give
slanted boxes in the original plane. Their representative
chord determines \(\lambda_b\). On the right, endpoints are
displayed in coordinates relative to this chord, after
{normalisation} at scale \(N_j\). The calculation below places
them in a fixed compact family, uniformly over all retained
box pairs. {Black dots mark the deterministic
box centers (with integer rounding in the row coordinate),
the first of which is the origin $o_b$.} No conditioning
on the depicted geodesic is used.}
\label{fig:endpointboxes}
\end{figure}

\paragraph{\textbf{Compact coordinates for each pair of boxes.}}
\begingroup
{{The endpoint boxes $Q_b^-$ and $Q_b^+$} are defined relative to the diagonal $x=i$, but
the chord joining their centers can have a different slope.
For a compatible pair, the difference from slope one is at most
of order $H_jN_j^{-1/3}$. We {normalise} relative to this chord
so that both centers have horizontal coordinate zero. Moving
$\ell_j$ rows within either box can then create a horizontal
displacement of order $H_jN_j^{-1/3}\ell_j$ from the chord,
or $H_j\ell_j/N_j$ after division by the transverse scale
$N_j^{2/3}$. A row height of order $N_j$ would allow this
quantity to grow with $H_j$. Our choice
$\ell_j\asymp N_j/H_j$ keeps it bounded, as verified in
Lemma~\ref{st:lem:commoncompact}. {Thus, in the new coordinates, the transverse widths of both
boxes remain bounded independently of $j$. Note that the geodesic
may visit several boxes within a row bin.}}

For $b\in\mathfrak B_j$, take the centers $(i_b,X_b)$ and
$(k_b,Y_b)$ of its two boxes in row and transverse coordinates,
rounding each row coordinate down to an integer. This rounding
keeps the origin on a Brownian row. In the original coordinates,
the representatives are $(x_b,i_b)=(i_b+X_b,i_b)$ and
$(y_b,k_b)=(k_b+Y_b,k_b)$. Define
\[
 o_b=(x_b,i_b),\qquad
 \lambda_b=\frac{y_b-x_b}{k_b-i_b}
           =1+\frac{Y_b-X_b}{k_b-i_b}.
\]
The origin $o_b$ and representative chord are shown in
Figure~\ref{fig:endpointboxes}.

\begin{lemma}[A common compact set for all box pairs]
\label{st:lem:commoncompact}
There is $L=L(c_0)$ such that, for every fixed $R$ and all
sufficiently large $n$, the following holds for every
$j\in\cI_n$ and $b\in\mathfrak B_j$.
We have $\lambda_b\in[1/2,2]$. Moreover, if
$(x,i)\in Q_b^-$ and $(y,k)\in Q_b^+$, and
\[
 (x',s')=\Phi_{N_j;o_b,\lambda_b}^{-1}(x,i),\qquad
 (y',u')=\Phi_{N_j;o_b,\lambda_b}^{-1}(y,k),
\]
then
\[
 |x'|,|y'|\le L,\qquad
 -c_0/8\le s'\le c_0/8,\qquad c_0/2\le u'\le3.
\]
\end{lemma}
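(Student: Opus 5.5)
This is a deterministic coordinate computation, and I will carry it out directly from the definitions. Inverting $\Phi_{N;o,\lambda}$ gives, for a point $(x,i)$,
\[
 s'=\frac{i-i_b}{N_j},\qquad
 x'=\frac{(x-x_b)-\lambda_b(i-i_b)}{2\lambda_bN_j^{2/3}}.
\]
The same formulas hold for $(y,k)$, producing $u'$ and $y'$.

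First I bound $\lambda_b$. Since $b\in\mathfrak B_j$, there is a witness pair $(x_0,i_0)\in Q_b^-$, $(y_0,k_0)\in Q_b^+$ satisfying \eqref{st:eq:boxcompatibility}.
\begin{itemize}
\item The box centers differ from the witnesses by at most $\ell_j$ in row and $N_j^{2/3}$ in transverse coordinate.
\item This gives $|(Y_b-X_b)-((y_0-k_0)-(x_0-i_0))|\le 2N_j^{2/3}$, hence $|Y_b-X_b|\le (H_j+2)N_j^{2/3}$.
\item It also gives $|(k_b-i_b)-(k_0-i_0)|\le 2\ell_j+2\le c_0N_j/8+2$. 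Consequently $k_b-i_b\ge c_0N_j(1-1/8)-2\ge c_0N_j/2$ and $k_b-i_b\le 2N_j+c_0N_j/8+2\le 3N_j$.
\end{itemize}
Therefore $|\lambda_b-1|\le 2(H_j+2)N_j^{-1/3}/c_0$. Since $H_j\le C_{c_0}R(1+\log n)^{1/3}$ and $N_j\ge n^\beta$, this is $o(1)$ uniformly in $j\in\cI_n$, so $\lambda_b\in[1/2,2]$ for large $n$.

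The row coordinates follow from the same box geometry. For $(x,i)\in Q_b^-$ we have $|i-i_b|\le\ell_j\le c_0N_j/16$, so $|s'|\le c_0/16\le c_0/8$. For $(y,k)\in Q_b^+$, $u'=(k-i_b)/N_j$ lies within $(\ell_j+1)/N_j\le c_0/16+o(1)$ of $(k_b-i_b)/N_j\in[c_0/2+\text{slack},\,3-\text{slack}]$. At this point I need to tighten the previous bullet slightly: using $k_0-i_0\in[c_0N_j,2N_j]$ and error $\le c_0N_j/8+2$, one gets $u'\in[7c_0/8-o(1),\,2+c_0/8+o(1)]\subset[c_0/2,3]$.

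The transverse coordinates are where the choice of $\ell_j$ matters, and this is the only delicate point. Write $x-x_b=(X-X_b)+(i-i_b)$ with $X=x-i$. Then
\[
 x'=\frac{(X-X_b)-(\lambda_b-1)(i-i_b)}{2\lambda_bN_j^{2/3}}.
\]
The first term is at most $N_j^{2/3}$ in absolute value, since both points lie in the same transverse bin. The second is at most $2(H_j+2)N_j^{-1/3}\ell_j/c_0\le 2(H_j+2)N_j^{2/3}/(16H_j)\le N_j^{2/3}$. With $\lambda_b\ge1/2$, this gives $|x'|\le 2$.

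For $y'$, write
\[
 y-x_b=(Y-Y_b)+(k-k_b)+(y_b-x_b),
\]
and use $y_b-x_b=\lambda_b(k_b-i_b)$ by definition of $\lambda_b$. The chord term then cancels, leaving
\[
 y'=\frac{(Y-Y_b)-(\lambda_b-1)(k-k_b)}{2\lambda_bN_j^{2/3}},
\]
which is bounded by the identical argument. So $L=2$ (or any $L$ depending only on $c_0$) works.

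All estimates are uniform in $j$ and $b$ once $n$ is large depending on $R$, $\beta$, $c_0$. The only genuine input is that $\ell_j\asymp N_j/H_j$ cancels the chord slope error of order $H_jN_j^{-1/3}$, so that the bound on $x',y'$ does not grow with $H_j$.
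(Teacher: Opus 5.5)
Your proposal is correct and follows essentially the same route as the paper: you bound $|\lambda_b-1|$ through a compatible witness pair, write the normalised transverse coordinate as $\bigl[(X-X_b)-(\lambda_b-1)(i-i_b)\bigr]/(2\lambda_bN_j^{2/3})$ so that $\ell_j\le c_0N_j/(16H_j)$ cancels the $O(H_jN_j^{-1/3})$ slope error, and compare target rows with the witness pair to get $u'$. The only difference is presentation: you spell out the cancellation $y_b-x_b=\lambda_b(k_b-i_b)$ for $y'$, which the paper only asserts, and you obtain the explicit constant $L=2$.
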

\begin{proof}
By the definition of $\mathfrak B_j$, there is a compatible
pair of points in the two boxes with row separation in
$[c_0N_j,2N_j]$ and transverse displacement at most
$H_jN_j^{2/3}$. Moving either point to its representative
changes its row coordinate by at most $\ell_j$ and its
transverse coordinate by at most $N_j^{2/3}/2$. Thus
\[
 c_0N_j/2\le k_b-i_b\le3N_j,\qquad
 |Y_b-X_b|\le(H_j+1)N_j^{2/3}.
\]
It follows that $|\lambda_b-1|\le C_{c_0}H_jN_j^{-1/3}$.
Since $H_j\le C_{c_0}R(1+\log n)^{1/3}$ and $N_j\ge n^\beta$,
this tends to zero uniformly in $j$ for fixed $R$.
Hence $\lambda_b\in[1/2,2]$ for all sufficiently large $n$.

For $(x,i)\in Q_b^-$, the {normalised} coordinates are
\[
 (x',s')=
 \left(\frac{x-x_b-\lambda_b(i-i_b)}{2\lambda_bN_j^{2/3}},
       \frac{i-i_b}{N_j}\right).
\]
Writing $X=x-i$, the numerator of the first coordinate is
\[
 x-x_b-\lambda_b(i-i_b)
 =X-X_b+(1-\lambda_b)(i-i_b).
\]
Here $|X-X_b|\le N_j^{2/3}/2$, and
\[
 \underbrace{|1-\lambda_b|}_{O_{c_0}(H_jN_j^{-1/3})}
 \underbrace{|i-i_b|}_{\le\ell_j}
 \le C_{c_0}H_jN_j^{-1/3}\ell_j
 \le C_{c_0}N_j^{2/3}.
\]
{The last inequality is where the smaller row width is used:
the factor $H_j$ from the slope difference is cancelled by
$\ell_j\le c_0N_j/(16H_j)$. Thus this contribution stays bounded
after division by $N_j^{2/3}$.}
The same bound holds for a target endpoint relative to
$(y_b,k_b)$. Since the representative chord passes through
both representatives, its {normalised} horizontal coordinate
$y'$ has the same form. Dividing by $2\lambda_bN_j^{2/3}$
therefore gives $|x'|,|y'|\le L$ for some $L=L(c_0)$.

Finally, $|i-i_b|\le\ell_j\le c_0N_j/16$.
Comparing any target row $k$ with the compatible pair used
above gives
\[
 c_0N_j-2\ell_j\le k-i_b\le2N_j+2\ell_j.
\]
Dividing by $N_j$ proves the claimed row-coordinate bounds.
\end{proof}

For the remainder of the proof, fix $L$ supplied by
Lemma~\ref{st:lem:commoncompact} and set
\begin{equation}\label{st:eq:commoncompact}
 K=[-L,L]\times[-c_0/8,c_0/8]\times[-L,L]\times[c_0/2,3].
\end{equation}
Thus every pair in $Q_b^-\times Q_b^+$ has {normalised}
coordinates in this same $K$, using its own origin $o_b$
and slope $\lambda_b$. The set $K$ is independent of
$j,b,R,n$ and has time gap at least $3c_0/8$.
Lemma~\ref{st:lem:compact} therefore applies with the same
constants to every retained pair.
\par\endgroup

\begin{proof}[Proof of Proposition~\ref{st:prop:local}]
{Choose a fixed constant $R_1\ge R_0$, with $R_0$
the threshold in Lemma~\ref{st:lem:regularityinput}, and set
$R=R_1a^{-1/16}$.}
\begingroup
{With $K$ from \eqref{st:eq:commoncompact} and the finite collections
$\mathfrak B_j$ defined above, each $b\in\mathfrak B_j$
has its deterministic origin $o_b$ and slope $\lambda_b$.
Define the event that all these box-pair comparisons hold:}
\[
 \mathcal S_{n,a}=
 \bigcap_{j\in\cI_n}\ \bigcap_{b\in\mathfrak B_j}
 \left\{
  \sup_{z\in K}|D_{N_j;o_b,\lambda_b}^{\tau_j}(z)|
                  \le C_K\tau_j^{1/4}
 \right\}.
\]
{Recall from \eqref{st:eq:tau} that
$\tau_j=tN_j^{1/3}=a(N_j/n)^{1/3}$. Hence
$\tau_jN_j^{-1/3}=t$, so every comparison uses the same
physical dynamical times zero and $t$. The supremum over $K$
includes every endpoint pair in the boxes indexed by $b$.
The union over $b$ already counts all the origins and slopes
used here, since $(o_b,\lambda_b)$ is determined by $b$.}
\par\endgroup

Lemma~\ref{st:lem:compact} and \eqref{st:eq:boxcount} give
\begin{align}
 \PP(\mathcal S_{n,a}^c)
 &\le C\sum_{j\in\cI_n}M_j
                  \exp\{-c\tau_j^{-3/16}\}\notag\\
 &\le CR^4\sum_{j\ge0}{(1+j)^{2/3}2^{10j/3}}
             \exp\{-c a^{-3/16}2^{j/16}\}\notag\\
 &\le CR^4\exp\{-c'a^{-3/16}\}
 \le C\exp\{-c''a^{-3/16}\}.
 \label{st:eq:scaleunion}
\end{align}
For the third inequality, use
\(AB\ge(A+B)/2\) with \(A=a^{-3/16}\ge1\) and
\(B=2^{j/16}\ge1\); the remaining series
{\(\sum_j(1+j)^{2/3}2^{10j/3}\exp\{-c2^{j/16}/2\}\)}
is finite. In the final inequality the factor
\(R^4=R_1^4a^{-1/4}\) is absorbed by decreasing the
exponential constant.

Define \(\cE_{n,a}=\cG_{n,R}\cap\mathcal S_{n,a}\).
The regularity bound \eqref{st:eq:georegprob} gives
\(\PP(\cG_{n,R}^c)\le C\exp\{-cR_1^3a^{-3/16}\}\),
so \eqref{st:eq:scaleunion} proves \eqref{st:eq:goodprob}.
For fixed \(a\), \(R\) is independent of \(n\); hence
all the regularity and compact-coordinate conditions hold for
sufficiently large \(n\).

\begingroup
{Write $u_2,v_2$ for the row coordinates of $u,v$, respectively.
On $\cE_{n,a}\subset\cG_{n,R}$, Lemma~\ref{st:lem:boxcover}
gives the explicit containment}
\[
 \bigl\{(u,v)\in\Gamma^t\times\Gamma^t:
            {u_2,v_2\in\ZZ,\quad}
            c_0N_j\le v_2-u_2\le2N_j\bigr\}
 \subseteq\bigcup_{b\in\mathfrak B_j}Q_b^-\times Q_b^+.
\]
For any such pair,
choose its box pair $b$. Lemma~\ref{st:lem:commoncompact} gives
\[
 z=\bigl(\Phi_{N_j;o_b,\lambda_b}^{-1}(u),
         \Phi_{N_j;o_b,\lambda_b}^{-1}(v)\bigr)\in K.
\]
The event $\mathcal S_{n,a}$ controls the comparison for this
$b$ and every $z\in K$. Undoing its {normalisation} yields
\[
 |T_u^{v,t}-T_u^{v,0}|
 \le C_K\sqrt{\lambda_b}\,\tau_j^{1/4}N_j^{1/3}
 \le\sqrt2 C_K\tau_j^{1/4}N_j^{1/3},
\]
which is \eqref{st:eq:localstability}.
\par\endgroup

\end{proof}

{The exponential in $2^{j/16}$ beats even the
overcount $(1+j)^{2/3}2^{10j/3}$ of endpoint-box pairs.} This is the
precise role of increasing subcriticality at smaller scales
in the union bound. The resulting event controls every
relevant endpoint pair, including the three pairs used for
each crossing in the next section. Consequently, the later
count treats each actual excursion directly, with the common
exceptional event charged only once in \eqref{eq:widemean}.

\section{\texorpdfstring{{From local stability to static near-{maximisers}}}{From local stability to static near-maximisers}}\label{sec:crossing}

\begingroup
{Having obtained local stability, we consider an excursion
of $\Gamma^t$ away from $\Gamma^0$ whose row duration lies in
$[N_j/2,N_j]$. At each integer row $m$ at least $2c_0N_j$ rows
from both endpoints of the excursion, we will show that the exit
coordinate $\Gamma^t(m)$ is a near-maximiser of the static routed
profile, with deficit at most $3A_j$. Here
$A_j=C\tau_j^{1/4}N_j^{1/3}$ is the local stability tolerance
in \eqref{st:eq:localstability}. If
$|\Gamma^t(m)-\Gamma^0(m)|\ge w_j$, this gives the static
twin-peak event $\cP_m(3A_j,w_j)$ defined in
\eqref{wd:eq:peak}. We first prove this implication and then
bound the twin-peak probability, retaining its dependence on
both the tolerance and the separation.}
\par\endgroup

\subsection{\texorpdfstring{{{From dynamical crossings to static near-{maximisers}}}}{From dynamical crossings to static near-maximisers}}

\begingroup
{The endpoints of an excursion lie on both geodesics.
In the time-zero environment, take the best path between these
endpoints that exits row $m$ at $x=\Gamma^t(m)$. This path
consists of two static geodesics, one ending at $(x,m)$ and
the other beginning at $(x,m+1)$. Comparing the passage values
for these two legs and for the full segment bounds its deficit
by $3A_j$. We then attach the portions of $\Gamma^0$ before
and after the excursion to obtain a path from $\mathbf{0}$ to
$\mathbf{n}$ with the same deficit. Figure~\ref{fig:threevalues}
shows these paths.}

{The same three-value comparison appears in
\cite[Section 3.3.2, equation (25)]{GH24}. Here
Proposition~\ref{st:prop:local} supplies one stability event for
every admissible choice of excursion endpoints and row $m$,
at the critical time $t=a n^{-1/3}$. In their treatment of many
excursions, \cite[Section 3.3.3]{GH24} instead compares the total
weight of a proxy path.}
\par\endgroup

Let \(u=(u_x,i)\), \(v=(v_x,k)\) lie on both
\(\Gamma^0\) and \(\Gamma^t\), with {\(i,k\in\ZZ\)} and
\(N_j/2\le k-i\le N_j\). They may be the endpoints of
an excursion. Recall that $c_0$ is the small fixed bulk fraction
in Proposition~\ref{st:prop:local}. We choose {an integer row $m$} in the interior
of the excursion, a distance at least $2c_0N_j$ from either end:
\begingroup
\begin{equation}\label{st:eq:interior}
 m-i\ge2c_0N_j,\qquad k-m\ge2c_0N_j.
\end{equation}
\par\endgroup
Let \(x=\Gamma^t(m)\) denote the exit coordinate and put
\[
 z=(x,m),\qquad z^+=(x,m+1),\qquad
 Z_m^0(x)={T_{\mathbf{0}}^{z,0}}+{T_{z^+}^{\mathbf{n},0}}.
\]
The two routed passage values use disjoint row sets; the
vertical jump from \(z\) to \(z^+\) has weight zero.

\begin{figure}[tbp]
\centering
\begin{tikzpicture}[x=.9cm,y=.75cm]
 \draw[guide] (.4,2) node[left] {row $m$}--(8,2);
 \draw[guide] (.4,3) node[left] {row $m+1$}--(8,3);
 \draw[black,line width=1.3pt] (.6,-.5)--(2,-.5)--(2,0)--(2.6,0);
 \draw[staticpath] (2.6,0)--(2.6,1)--(3.2,1)
   --(3.2,2)--(3.8,2)--(3.8,3)--(4.4,3)
   --(4.4,4)--(5.1,4)--(5.1,5)--(7,5);
 \draw[dynamicpath] (2.6,0)--(4,0)--(4,1)--(4.8,1)
   --(4.8,2)--(5.6,2)--(5.6,3)--(6,3)
   --(6,4)--(7,4)--(7,5);
 \draw[black,line width=1.3pt] (7,5)--(8,5)--(8,6);
 \draw[proxypath] (2.6,0)--(3.6,0)--(3.6,1)--(4.3,1)
   --(4.3,2)--(5.6,2);
 \draw[proxypath] (5.6,3)--(6.6,3)--(6.6,5)--(7,5);
 \foreach \x/\y in {2.6/0,7/5,.6/-.5,8/6}
    \fill (\x,\y) circle (2pt);
 \node[below left] at (2.6,0) {$u$};
 \node[above] at (7,5) {$v$};
 \node[below left] at (.6,-.5) {${\mathbf{0}}$};
 \node[above] at (8,6) {${\mathbf{n}}$};
 \fill[pathorange] (5.6,2) circle (2.4pt) node[right,fill=white,inner sep=1pt] {$z=(x,m)$};
 \fill[pathorange] (5.6,3) circle (2.4pt) node[above left,fill=white,inner sep=1pt] {$z^+$};
 \node[anchor=north west,align=left,text width=4cm] at (9,5.65)
   {\textbf{Three local values}\\[3pt]
    $T_u^v$, $T_u^z$, $T_{z^+}^v$\\[6pt]
    Each changes by at most $A_j$.};
 \node[anchor=north west,align=left,text width=4cm] at (9,2.45)
   {\textbf{A global witness}\\[3pt]
    Keep the static prefix and suffix.\\[3pt]
    The deficit at $x$ is at most $3A_j$.};
 \draw[staticpath] (1,-1.5)--(1.65,-1.5);
 \node[anchor=west,font=\footnotesize] at (1.75,-1.5) {time-zero geodesic};
 \draw[dynamicpath] (5,-1.5)--(5.65,-1.5);
 \node[anchor=west,font=\footnotesize] at (5.75,-1.5) {time-$t$ geodesic};
 \draw[proxypath] (9,-1.5)--(9.65,-1.5);
 \node[anchor=west,font=\footnotesize] at (9.75,-1.5) {static {maximising} legs};
\end{tikzpicture}
\caption{{The static deficit estimate.} The time-$t$ segment from
\(u\) to \(v\) passes through the vertical jump from
\(z\) to \(z^+\). The two dashed legs are separately
{optimised} in the static environment. With the common
static prefix and suffix, they give a candidate routed
path from \({\mathbf{0}}\) to \({\mathbf{n}}\). The argument compares three
{optimised} values; it does not estimate the static weight
of the orange excursion. The drawn legs are schematic
and may have additional intersections with the static geodesic.}
\label{fig:threevalues}
\end{figure}

\begin{lemma}[{Static deficit estimate}]\label{st:lem:crossing}
{For all sufficiently large $n$, on the event
$\cE_{n,a}$ of Proposition~\ref{st:prop:local},}
simultaneously for all the above choices,
\begin{equation}\label{st:eq:crossing}
 0\le {T_{\mathbf{0}}^{\mathbf{n},0}}-Z_m^0\bigl(\Gamma^t(m)\bigr)\le3A_j.
\end{equation}
\end{lemma}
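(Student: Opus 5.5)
The lower bound $0\le T_{\mathbf{0}}^{\mathbf{n},0}-Z_m^0(x)$ is immediate. Concatenating a time-zero geodesic from $\mathbf 0$ to $z$, the vertical jump to $z^+$, and a time-zero geodesic from $z^+$ to $\mathbf n$ gives a staircase from $\mathbf 0$ to $\mathbf n$ of weight $Z_m^0(x)$. Its rows are disjoint, and the jump has zero weight. So the work is in the upper bound, which follows the three-value comparison in Figure~\ref{fig:threevalues}.

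\textbf{Step 1: Splitting at $(u,v)$.} Since $u,v\in\Gamma^0$, the geodesic property gives $T_{\mathbf 0}^{\mathbf n,0}=T_{\mathbf 0}^{u,0}+T_u^{v,0}+T_v^{\mathbf n,0}$. The same additivity holds for the prefix and suffix glued to the routed legs. Superadditivity gives $T_{\mathbf 0}^{z,0}\ge T_{\mathbf 0}^{u,0}+T_u^{z,0}$ and $T_{z^+}^{\mathbf n,0}\ge T_{z^+}^{v,0}+T_v^{\mathbf n,0}$. One should be careful that the horizontal segment of row $i$ (and of row $k$) is not double counted. Since $u,v$ lie on integer rows, I will define passage values between lattice-row points as the paper does, so that splitting a geodesic at a point on row $i$ is additive. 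Hence
\[
 T_{\mathbf 0}^{\mathbf n,0}-Z_m^0(x)\le T_u^{v,0}-T_u^{z,0}-T_{z^+}^{v,0}.
\]

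\textbf{Step 2: Using time $t$.} At time $t$, the points $u,z,z^+,v$ all lie on $\Gamma^t$, and $\Gamma^t$ exits row $m$ at $x$. Therefore $T_u^{v,t}=T_u^{z,t}+T_{z^+}^{v,t}$, so the right side above equals
\[
 (T_u^{v,0}-T_u^{v,t})-(T_u^{z,0}-T_u^{z,t})-(T_{z^+}^{v,0}-T_{z^+}^{v,t}).
\]
It remains to bound each of the three differences by $A_j$ using Proposition~\ref{st:prop:local}. That proposition requires the endpoint pairs to lie on $\Gamma^t$ with integer rows and row gap in $[c_0N_j,2N_j]$.
\begin{itemize}
\item For $(u,v)$ the gap is $k-i\in[N_j/2,N_j]$, which lies in the range since $c_0<1/8$.
\item For $(u,z)$ the gap is $m-i\ge 2c_0N_j$ and at most $N_j$.
\item For $(z^+,v)$ the gap is $k-m-1\ge2c_0N_j-1\ge c_0N_j$ for large $n$, since $N_j\ge n^\beta$.
\end{itemize}
All three pairs are therefore admissible, and on $\cE_{n,a}$ each difference is at most $A_j$, giving the bound $3A_j$. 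Simultaneity over all choices of $u,v,m$ comes for free, because $\cE_{n,a}$ controls every admissible pair on $\Gamma^t$ at once.

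\textbf{Main obstacle.} The bookkeeping is the delicate part. The passage values must be exactly additive along geodesics at their shared points, since endpoints on a row share a horizontal position. The boundary weight conventions must also match so that the equalities in Steps 1 and 2 hold. Finally, the case $j$ near the smallest retained scale needs the check that $2c_0N_j-1\ge c_0N_j$, which holds for large $n$.
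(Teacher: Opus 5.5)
Your proposal is correct and follows essentially the same route as the paper. You concatenate the static prefix and suffix with maximising legs to reduce the deficit to $T_u^{v,0}-T_u^{z,0}-T_{z^+}^{v,0}$, use additivity of $\Gamma^t$ at $u,z,z^+,v$, and bound the three resulting increments by Proposition~\ref{st:prop:local}. This includes the same check $k-m-1\ge 2c_0N_j-1\ge c_0N_j$ for the upper leg.
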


\begin{proof}
The static geodesic passes through \(u,v\). Its prefix and
suffix, concatenated with {maximising} static paths from \(u\)
to \(z\) and from \(z^+\) to \(v\), show that
\begin{equation}\label{st:eq:concatenate}
 0\le {T_{\mathbf{0}}^{\mathbf{n},0}}-Z_m^0(x)
 \le T_u^{v,0}-T_u^{z,0}-T_{z^+}^{v,0}.
\end{equation}
At dynamical time \(t\), all four points occur in order on
\(\Gamma^t\), so
\[
 T_u^{v,t}=T_u^{z,t}+T_{z^+}^{v,t}.
\]
Therefore the final expression in \eqref{st:eq:concatenate} equals
\begin{equation}\label{st:eq:threevalues}
 \begin{aligned}
 &(T_u^{v,0}-T_u^{v,t})
 +(T_u^{z,t}-T_u^{z,0})\\
 &\hspace{35mm}+(T_{z^+}^{v,t}-T_{z^+}^{v,0}).
 \end{aligned}
\end{equation}
\begingroup
For all sufficiently large $n$, the upper leg has row duration
$k-m-1\ge2c_0N_j-1\ge c_0N_j$, since $N_j\ge n^\beta$.
Together with \eqref{st:eq:interior}, this shows that all three
pairs are covered by \eqref{st:eq:localstability}. Each absolute
difference is at most $A_j$, which proves \eqref{st:eq:crossing}.
\par\endgroup
\end{proof}

The conclusion uses the \emph{global} static profile with
the original deterministic endpoints \({\mathbf{0},\mathbf{n}}\), but its
tolerance is determined by the excursion's local length.
{The next subsection applies Brownianity to this
fixed-endpoint profile at the separation $w_j$;} it does not require a
Brownian comparison for profiles with the random endpoints
\(u,v\).

\subsection{\texorpdfstring{{A static twin-peak estimate}}{A static twin-peak estimate}}\label{sec:nearpeaks}
\begingroup
Lemma~\ref{st:lem:crossing} associates a static near-{maximiser}
to each suitably separated crossing of a dynamical excursion.
We now bound the probability of such a crossing for a deterministic
row. Keeping both the deficit tolerance and the spatial separation
in the estimate will allow its expected contribution to be summed
over all excursion scales.
\par\endgroup

Work in static Brownian last passage percolation, with
{endpoints \(\mathbf{0},\mathbf{n}\)}. Fix a bulk fraction \(\xi\in(0,1/2)\)
and an integer row \(\xi n\le m\le(1-\xi)n\). Set
\[
 Z_m(x)={T_{\mathbf{0}}^{(x,m)}}+{T_{(x,m+1)}^{\mathbf{n}}},\qquad
 M_m={\Gamma_{\mathbf{0}}^{\mathbf{n}}}(m),\qquad 0\le x\le n.
\]
Here \({\Gamma_{\mathbf{0}}^{\mathbf{n}}}(m)\) is the exit coordinate from row \(m\).
The two passage values defining \(Z_m\) use disjoint row sets.
Almost surely \(M_m\) is its unique {maximiser} and
\(\max Z_m={T_{\mathbf{0}}^{\mathbf{n}}}\).

For a deficit \(A>0\) and separation \(w>0\), define
\begin{equation}\label{wd:eq:peak}
 \cP_m(A,w)=
 \{\exists x\in[0,n]:
       |x-M_m|\ge w,\quad {T_{\mathbf{0}}^{\mathbf{n}}}-Z_m(x)\le A\}.
\end{equation}
The event retains two pieces of information: the competitor
loses at most \(A\) in weight, and it is at least \(w\)
away from the {maximising} departure. Keeping both parameters
is essential. At smaller excursion scales both will shrink,
but \(A\) shrinks faster than the Brownian fluctuation scale
\(\sqrt w\). This is what makes the near-peak probability
summable over excursion scales.

\begingroup
The relevant ratio is the deficit tolerance divided by the
Brownian fluctuation scale at separation $w$:
\begin{equation}\label{wd:eq:parameters}
 \sigma=\frac{A}{\sqrt w}.
\end{equation}
\par\endgroup

\begin{proposition}[{\cite[Proposition 82]{Bha25}}]\label{wd:prop:static}
Fix \(\xi\in(0,1/2)\) and \(B>0\). There are constants
\(C,c,\sigma_0>0\) and \(n_0\), depending only on \(\xi,B\),
such that, for \(n\ge n_0\), every integer row $\xi n\le m\le(1-\xi)n$, and every
\(A,w>0\) satisfying
\begin{equation}\label{wd:eq:range}
 {n^{-B}\le\frac{w}{n^{2/3}}\le1},
 \qquad n^{-B}\le\sigma\le\sigma_0,
\end{equation}
we have
\begin{equation}\label{wd:eq:main}
 \PP(\cP_m(A,w))
 \le C\sigma^{1/2}+Ce^{-cn^{1/12}}.
\end{equation}
\end{proposition}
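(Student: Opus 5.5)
Since this estimate is imported from \cite[Proposition 82]{Bha25}, the natural plan is to reproduce its mechanism: a Brownian comparison for the routed profile $Z_m$ near row $m$, followed by a twin-peak computation for Brownian-like processes. First localise. By the geodesic regularity of Lemma~\ref{st:lem:regularityinput} (applied statically at the single row $m$), with probability $1-Ce^{-cn^{1/12}}$ the maximiser $M_m$ lies within $n^{2/3+\epsilon}$ of $m$, and any near-maximiser with deficit $A\le\sigma_0\sqrt w\le\sigma_0 n^{1/3}$ lies within a window $[m-Rn^{2/3},m+Rn^{2/3}]$. Beyond this window the parabolic loss, of order $R^2n^{1/3}$, exceeds $A$. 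This places all relevant competitors in a window of width $O(n^{2/3})$, with an error absorbed into the $e^{-cn^{1/12}}$ term.

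On that window, write $Z_m(x)=T_{\mathbf 0}^{(x,m)}+T_{(x,m+1)}^{\mathbf n}$ as a sum of two independent profiles, since they use disjoint rows. Each profile is Brownian on the scale $n^{2/3}$ in the sense of Brownian Gibbs/Radon--Nikodym comparison (the Hammond Brownian regularity theorem for BLPP weight profiles): events of probability $\varepsilon$ for a Brownian bridge have probability at most $\varepsilon\exp\{C(\log\varepsilon^{-1})^{5/6}\}$ for the profile. The sum of two independent such profiles is then also comparable to Brownian motion with rate two on the window.

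The Brownian core estimate is as follows. For a two-sided Brownian motion $B$ on an interval of length $O(n^{2/3})$ with argmax $M$, bound the probability that some $x$ with $|x-M|\ge w$ has $\max B-B(x)\le A$. Decompose by the dyadic distance $2^kw\le|x-M|<2^{k+1}w$. Near the argmax, $B$ looks like a pair of Bessel-3 processes. A second near-peak at distance $r$ from $M$ within height $A$ has probability of order $A/\sqrt r$. Indeed, the Bessel-3 process started near zero revisits the band $[0,A]$ after time $r$ with probability $O(A/\sqrt r)$. Summing over $k$ gives $O(\sigma)$, far better than the claimed $\sigma^{1/2}$.

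The main obstacle is converting this through the Brownian comparison. The comparison costs a subpolynomial factor. It also requires handling the fact that the near-peak event is not a fixed-window event for a bridge, since the bridge endpoints are random. Applying H\"older or Cauchy--Schwarz to the Radon--Nikodym derivative, whose moments are bounded by the Brownian regularity input, converts an $O(\sigma)$ Brownian bound into $O(\sigma^{1/2})$ for the LPP profile. This explains the exponent $1/2$ in the statement, with the constraints $n^{-B}\le\sigma$ and $n^{-B}\le w/n^{2/3}$ ensuring that the comparison's error terms are polynomially dominated. The remaining bookkeeping consists of the uniformity in $m\in[\xi n,(1-\xi)n]$, which follows from the compact rescaled geometry, and the union over the window.
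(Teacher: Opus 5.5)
The paper does not reprove this estimate. Its proof only identifies the statement with \cite[Proposition 82]{Bha25}, taking $\beta'=\xi$, $D=B$, $\varepsilon=1/2$, $\sigma_0=1$, and observes that the row-disjoint routed profile there is $Z_m$ and that its twin-peak event is exactly $\cP_m(A,w)$. So the exponent $1/2$ is simply the choice $\varepsilon=1/2$ in the cited bound $C\sigma^{1-\varepsilon}$; it is not a Cauchy--Schwarz loss.

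Your attempt to rebuild the cited argument has two genuine gaps. The first is the localisation. Passing from the $n^{2/3+\epsilon}$ window to a window $[m-Rn^{2/3},m+Rn^{2/3}]$ of width $O(n^{2/3})$, with the error absorbed into $e^{-cn^{1/12}}$, is false. For fixed $R$, the probability that a near-maximiser of deficit at most $\sigma_0n^{1/3}$ lies outside this window is of order $e^{-cR^3}$; already $M_m$ itself does so with probability of that order. This does not decay with $n$, since the parabolic loss $R^2n^{1/3}$ is only a mean. Making the error $e^{-cn^{1/12}}$ would need $R\asymp n^{1/36}$. On windows of that scaled width no Brownian comparison with subpolynomial cost exists: the profile follows its parabola with probability of order one, whereas this is an event of Brownian probability about $e^{-cR^3}$. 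You would instead need $R\asymp(\log\sigma^{-1})^{1/3}$, which is at most $C(\log n)^{1/3}$ because $\sigma\ge n^{-B}$. Localisation then costs only $O(\sigma)$. It remains to check that the comparison loss on windows of that width stays subpolynomial in $\sigma^{-1}$, or to reduce to windows of bounded scaled width.

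The second gap is more serious: the Brownian input you name does not control this event. Hammond's estimate compares the profile \emph{minus its affine interpolation} with a Brownian bridge. But $\cP_m(A,w)$ is not invariant under adding an affine function. Tilting a bridge by the slope of a long face of its concave majorant produces two exact maximisers at large separation. So without joint control of the random endpoint increment, a bridge-level bound says nothing about twin peaks. Your step ``the sum of two independent such profiles is comparable to Brownian motion with rate two'' therefore does not follow from it. You flag this yourself (``the bridge endpoints are random''), but then simply assume a Radon--Nikodym derivative with respect to Brownian motion with bounded moments. That is precisely the substance of \cite[Proposition 82]{Bha25}: a motion-level, Wiener-density-type comparison for the BLPP profiles, in the spirit of \cite{CHH23,Dau24}, with the $e^{-cn^{1/12}}$ truncation that forces $\sigma\ge n^{-B}$. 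If it is granted for each profile, it passes to their independent sum, since conditional expectation contracts $L^p$ norms of densities. Your Bessel-3 computation and H\"older step then give a bound of the required form. Without it, the argument is incomplete.
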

\begingroup
\begin{proof}[Identification with the cited proposition]
Take \(\beta'=\xi\), \(D=B\), and \(\varepsilon=1/2\) in
\cite[Proposition 82]{Bha25}, and choose \(\sigma_0=1\).
The row-disjoint routed profile there is
\[
 Z_{\mathbf0}^{\mathbf n,\bullet}(x,m)
 ={T_{\mathbf{0}}^{(x,m)}}+{T_{(x,m+1)}^{\mathbf{n}}}=Z_m(x).
\]
Since \(Z_m(x)\le {T_{\mathbf{0}}^{\mathbf{n}}}\), its absolute weight deficit is
\({T_{\mathbf{0}}^{\mathbf{n}}}-Z_m(x)\). Its event \(\mathrm{TP}_{A,w,m}\) is
therefore exactly \(\cP_m(A,w)\), and its two lower bounds
are \(n^{-B}\le w/n^{2/3}\) and \(n^{-B}\le A/\sqrt w\).
The cited bound \(C\sigma^{1-\varepsilon}+Ce^{-cn^{1/12}}\)
then gives \eqref{wd:eq:main}, uniformly over the stated rows
and parameters.
\end{proof}
\endgroup
\begingroup
Proposition~\ref{wd:prop:static} uses the locally Brownian
{behaviour} of passage-time profiles and hence of routed profiles
such as $Z_m$, which is a sum of profiles from disjoint row sets.
The Brownian Gibbs description of the Airy line ensemble
\cite{CH14} is the basis for quantitative Brownian comparisons
in \cite{Ham22,CHH23} and the sharper Wiener-density estimates
in \cite{Dau24}. For BLPP, Ganguly and Hammond
\cite[Theorem 1.3]{GH23} use Brownian comparison to bound
separated near maxima. The estimate imported here from
\cite[Proposition 82]{Bha25} adapts the Wiener-density argument
to BLPP and retains the tolerance and separation without a
loss growing with the ambient window. Such estimates are called
\emph{twin-peak estimates}: they control the occurrence of a
second point whose height is close to the maximum but whose
location is separated from the {maximiser}.
\par\endgroup

\subsection{\texorpdfstring{{Substitution into the excursion argument}}{Substitution into the excursion argument}}

\begingroup
\begingroup
With the parameters in \eqref{eq:parameters} and
\eqref{st:eq:scales}, {the static deficit estimate
in Lemma~\ref{st:lem:crossing}} gives the tolerance
$3A_j$. Writing it in terms of the effective time, we have
\begin{equation}\label{wd:eq:dynparameters}
 3A_j=C_0\tau_j^{1/4}N_j^{1/3},\qquad
 w_j=\tau_j^\nu N_j^{2/3},
\end{equation}
where $C_0>0$ is fixed. The ratio entering the twin-peak bound is
therefore
\begin{equation}\label{wd:eq:substitution}
 \sigma_j=\frac{3A_j}{\sqrt{w_j}}
          =C_0\tau_j^{1/4-\nu/2}.
\end{equation}
\par\endgroup

The powers of $N_j$ cancel. Since $\nu<1/2$, this ratio
decreases with the effective time $\tau_j$ and therefore
with the excursion scale.

\begingroup
Combining this ratio with the static twin-peak estimate gives
the following bound at a fixed row and, on summing the resulting
geometric series, over every retained scale.
\par\endgroup

\begin{corollary}\label{wd:cor:sum}
Fix $\xi\in(0,1/2)$, $0<\nu<1/2$ and $C_0>0$.
For every sufficiently small fixed $a>0$ and all sufficiently
large $n$, uniformly over $j\in\cI_n$ and integers
$\xi n\le m\le(1-\xi)n$,
\begin{equation}\label{wd:eq:rowbound}
 \PP(\cP_m({3A_j},w_j))
 \le C_{\xi,\nu,C_0}\tau_j^{\,1/8-\nu/4}
                  +Ce^{-cn^{1/12}}.
\end{equation}
Consequently,
\begin{equation}\label{wd:eq:sum}
 \frac1n\sum_{\xi n\le m\le(1-\xi)n}
       \sum_{j\in\mathcal I_n}\PP(\cP_m({3A_j},w_j))
 \le C_{\xi,\nu,C_0}a^{\,1/8-\nu/4}
                  +Ce^{-c'n^{1/12}}.
\end{equation}
\end{corollary}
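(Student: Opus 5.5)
The plan is to apply Proposition~\ref{wd:prop:static} with $A=3A_j$ and $w=w_j$ at each retained scale, after checking its parameter range \eqref{wd:eq:range}. The single-row bound \eqref{wd:eq:rowbound} then follows from \eqref{wd:eq:substitution}. The summed bound \eqref{wd:eq:sum} follows by summing a geometric series in $j$.

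\textbf{Checking the range.} Here $\sigma_j=C_0\tau_j^{1/4-\nu/2}$ by \eqref{wd:eq:substitution}, and $w_j/n^{2/3}=\tau_j^\nu(N_j/n)^{2/3}$. Since $N_j\le n$ and $\tau_j=a(N_j/n)^{1/3}\le a<1$, we get $w_j/n^{2/3}\le1$. Also $\sigma_j\le C_0a^{1/4-\nu/2}\le\sigma_0$ once $a$ is small enough; this is where the hypothesis ``sufficiently small fixed $a$'' enters. For the lower bounds, $j\in\cI_n$ gives $N_j\ge n^\beta$, so $N_j/n\ge n^{\beta-1}$ and $\tau_j\ge a n^{(\beta-1)/3}\ge n^{-1/2}$ for large $n$. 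Hence $w_j/n^{2/3}\ge n^{-\nu/2}n^{-2/3}\ge n^{-B}$ and $\sigma_j\ge C_0n^{-1/8}\ge n^{-B}$ with, say, $B=2$. These constants do not depend on $j$ or $m$, so Proposition~\ref{wd:prop:static} applies uniformly over $j\in\cI_n$ and rows $\xi n\le m\le(1-\xi)n$. It gives
\[
 \PP(\cP_m(3A_j,w_j))\le C\sigma_j^{1/2}+Ce^{-cn^{1/12}}
 = C C_0^{1/2}\tau_j^{1/8-\nu/4}+Ce^{-cn^{1/12}},
\]
which is \eqref{wd:eq:rowbound}. The exponent $1/8-\nu/4$ is positive because $\nu<1/2$.

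\textbf{Summing.} Since $N_j=\lfloor n2^{-j}\rfloor\le n2^{-j}$, we have $\tau_j\le a2^{-j/3}$, and therefore
\[
 \sum_{j\ge0}\tau_j^{1/8-\nu/4}\le a^{1/8-\nu/4}\sum_{j\ge0}2^{-j(1/8-\nu/4)/3}=C_\nu a^{1/8-\nu/4}.
\]
The set $\cI_n$ has at most $\log_2 n+1$ elements. So the error terms contribute at most $C(\log n)e^{-cn^{1/12}}\le Ce^{-c'n^{1/12}}$ per row. Averaging over at most $n$ rows and dividing by $n$ gives \eqref{wd:eq:sum}.

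The only delicate point is the range verification: the lower bounds in \eqref{wd:eq:range} at the smallest retained scales, which is where $N_j\ge n^\beta$ is used, and the upper bound $\sigma_j\le\sigma_0$, which forces $a$ to be small. The rest is direct substitution.
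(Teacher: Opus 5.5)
Your proposal is correct and follows the paper's proof: apply Proposition~\ref{wd:prop:static} with $B=2$ after checking \eqref{wd:eq:range}, then sum the geometric series coming from $\tau_j\le a2^{-j/3}$, and absorb the $O(n\log n)$ error terms after dividing by $n$. The only minor difference is that the paper obtains the lower bounds in \eqref{wd:eq:range} from $N_j\ge1$ alone, so your use of $N_j\ge n^\beta$ is harmless but not needed.
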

\begin{proof}
\begingroup
To apply Proposition~\ref{wd:prop:static}, we must check
its range \eqref{wd:eq:range}. We take $B=2$. For a fixed
sufficiently small $a>0$ and all sufficiently large $n$,
uniformly over $j\in\cI_n$,
\[
 \begin{aligned}
 \frac{w_j}{n^{2/3}}
 &=a^\nu(N_j/n)^{(2+\nu)/3}
   \ge a^\nu n^{-(2+\nu)/3}\ge n^{-2},\\
 \sigma_j
 &=C_0a^{1/4-\nu/2}(N_j/n)^{(1/4-\nu/2)/3}
   \ge C_0a^{1/4-\nu/2}n^{-(1/4-\nu/2)/3}\ge n^{-2}.
 \end{aligned}
\]
Here we only used $N_j\ge1$; both powers of $n$ are smaller
than $2$ because $0<\nu<1/2$. The upper bounds
$w_j/n^{2/3}\le1$ and $\sigma_j\le\sigma_0$ follow from
$N_j\le n$ and a sufficiently small choice of $a$.
These are precisely the four inequalities in
\eqref{wd:eq:range}.

Proposition~\ref{wd:prop:static} now applies with $B=2$,
and \eqref{wd:eq:substitution} gives \eqref{wd:eq:rowbound}.
\par\endgroup
Since $1/8-\nu/4>0$ and $\tau_j\le a2^{-j/3}$,
\[
 \sum_{j\in\cI_n}\tau_j^{1/8-\nu/4}
 \le a^{1/8-\nu/4}\sum_{j\ge0}2^{-j(1/8-\nu/4)/3}
 \le C_\nu a^{1/8-\nu/4}.
\]
\begingroup
For each pair $(m,j)$, the bound \eqref{wd:eq:rowbound} also
contains the error term $Ce^{-cn^{1/12}}$. There are at most
$n+1$ choices of $m$ and $C\log n$ choices of $j$. After the
division by $n$ in \eqref{wd:eq:sum}, the sum of these errors
is therefore at most
\[
 \frac{n+1}{n}\,C\log n\,e^{-cn^{1/12}}
 \le C'e^{-c'n^{1/12}}.
\]
Combining this with the geometric sum above proves
\eqref{wd:eq:sum}.
\par\endgroup
\end{proof}

\begingroup
Thus summability of the twin-peak probabilities requires
$0<\nu<1/2$. The slender-excursion estimate used next holds
for every fixed $\nu>0$, so it imposes no additional restriction.
\par\endgroup
\par\endgroup

\begingroup
\section{Proof of the overlap theorem}\label{sec:assembly}
\begingroup
The previous section gives a static twin-peak event at every
suitably interior row where a dynamical excursion is separated
from the static geodesic. We can therefore control long
excursions that have such a separation on a positive fraction
of their rows. Following the terminology of \cite{GH23,GH24},
the complementary excursions are called \emph{slender}: they
remain close to the static geodesic for almost all of their
duration. Their rarity requires a separate argument, tightening
the corresponding slender-path estimate in \cite{GH23}.
We state the resulting bound here, use it to complete
Theorem~\ref{thm:main}, and give its proof in
Section~\ref{sec:slender}.
\par\endgroup

\subsection{\texorpdfstring{{The slender-excursion input}}{The slender-excursion input}}

\begingroup
For $\chi\in(0,1)$, an excursion $\pi$ of $\Gamma^t$ from
$\Gamma^0$ is \emph{slender at scale $j$} if its row duration
$d$ belongs to $[N_j/2,N_j]$ and
\[
 \#\{r\text{ a row of }\pi:
       |\pi(r)-\Gamma^0(r)|>w_j\}
 \le{\chi d}.
\]

\begingroup
Here $\pi(r)$ is the departure coordinate of the excursion's
time-$t$ segment, and the count includes both endpoint rows.
\par\endgroup
\begingroup
{In the following proposition,} $a$ determines the tube widths through
\[
 \theta_j=\bigl[a(N_j/n)^{1/3}\bigr]^\nu,
 \qquad w_j=\theta_jN_j^{2/3}.
\]
These are exactly the widths in
\eqref{st:eq:scales} and \eqref{wd:eq:dynparameters} at dynamical
time $a n^{-1/3}$. We now keep these widths fixed while allowing the
comparison time $t$ to be any deterministic nonnegative number.
Thus $a$ is a width parameter in the proposition below; it need
not equal $tn^{1/3}$. {This freedom will allow us to
enlarge the tubes later without changing the two geodesics being
compared.} In the proof of the overlap theorem, we shall return to the choice $t=a n^{-1/3}$.
\par\endgroup

\begingroup
\begin{proposition}[Rarity of slender excursions]\label{sl:prop:dynamic}
There is an absolute constant $\chi\in(0,1)$ such that the
following holds. Fix $0<\beta<1$ and $\nu>0$.
There are $a_*>0,c,C>0$, depending on $\beta,\nu$, such that,
for $0<a\le a_*$
and all sufficiently large $n$ depending on $a$, the
following holds. For any fixed OU dynamical time $t\ge0$,
\begin{equation}\label{sl:eq:noslender}
 \PP\!\left(
  \begin{array}{c}
   \text{there is a slender excursion of }\Gamma^t
          \text{ from }\Gamma^0\\
   \text{at some scale }N_j\ge n^\beta
  \end{array}\right)
 \le C e^{-c a^{-3\nu/8}}.
\end{equation}
The constants and the lower bound on $n$ are uniform in the
fixed time $t$. In particular, this applies at
$t=a n^{-1/3}$.
\end{proposition}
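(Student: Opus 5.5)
The plan is to follow the mechanism of \cite{GH23}, first for a deterministic reference curve and then transferred to the random geodesic $\Gamma^0$ by the FKG comparison of \cite[Lemma 9.6]{GH24}, exactly as announced in the outline. The basic static estimate I aim for is the following. Fix a scale $N=N_j$, endpoints $u,v$ at row distance $d\in[N/2,N]$, and a deterministic nondecreasing reference function $\phi$ with KPZ-regular increments. The event that some staircase from $u$ to $v$ lies within distance $w=\theta N^{2/3}$ of $\phi$ on all but $\chi d$ rows, and still has weight at least $T_u^{v}-C\theta^{1/2}N^{1/3}$, should have probability at most $C\exp\{-c\theta^{-3/4}\}$ for a small absolute $\chi$.

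The proof of this static bound proceeds as follows. First, split $[i,k]$ into $K\asymp\theta^{-3/2}$ consecutive blocks of $d/K$ rows. On a block of that length, the natural transversal scale is $(d/K)^{2/3}\asymp w$. Second, a path confined to the tube on most rows of a block has weight at most the passage value of a block of width $O(1)$ in its own KPZ units. That value falls short of the unconstrained value by a positive amount of order $(d/K)^{1/3}$ with probability bounded below, and these deficits are independent across blocks because they use disjoint rows. Third, a slender path may leave the tube on up to $\chi d$ rows. Choosing $\chi$ small, at least half of the blocks are mostly confined. Summing the independent block deficits gives a total deficit of order $K(d/K)^{1/3}=K^{2/3}d^{1/3}\asymp\theta^{-1}N^{1/3}$, except with probability $e^{-cK}$ by a Chernoff bound. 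This beats the required near-optimality, since the geodesic's own weight is within $O(\theta^{-1/2})\cdot$ smaller terms. I would also invoke Lemma~\ref{sl:lem:compactweightinput} to control the upper tail of the unconstrained value $T_u^v$ on the scale $\theta^{-1/2}N^{1/3}$. The resulting exponent, after bookkeeping, should be of the form $\theta^{-3/8}=a^{-3\nu/8}\,(N/n)^{-\nu/8}$; this matches \eqref{sl:eq:noslender} at $j=0$ and improves at smaller scales.

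Next I transfer the estimate to the random curve. An excursion of $\Gamma^t$ is an exact geodesic segment at time $t$ between points of $\Gamma^0$. I would condition on the time-zero environment outside, or rather use \cite[Lemma 9.6]{GH24}. The event that a path near $\Gamma^0$ is a geodesic is decreasing in the weights off $\Gamma^0$. This lets us replace $\Gamma^0$ by a deterministic curve with the same regularity, at the cost of constants, uniformly in $t$, since at time $t$ the environment is again Brownian. The endpoints $u,v$ are random. I would discretize them as in Lemma~\ref{st:lem:boxcover}, using boxes of row width $\ell_j$ and transverse width $w_j$ on the regularity event $\cG_{n,R}$. The number of box pairs is polynomial in $2^j$, $R$ and $\theta_j^{-1}$, with $R=a^{-c}$. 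Finally, I would sum over $j\in\cI_n$ as in \eqref{st:eq:scaleunion}, using $\exp\{-c\,a^{-3\nu/8}2^{j\nu/8}\}$ to absorb the counts.

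I expect the main obstacle to be the FKG transfer with random endpoints. It requires that the ``slender and geodesic'' event be monotone in the right sense when the reference curve $\Gamma^0$ is itself the time-zero geodesic, while the dynamical path lives at time $t$, which is correlated with time zero. My first attempt would be to decompose $B^t=e^{-t}B^0+\sqrt{1-e^{-2t}}\,B'$ and apply the monotonicity conditionally on $B'$.
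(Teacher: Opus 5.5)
Your architecture matches the paper's: a penalty for slender paths near a deterministic reference curve, an FKG transfer to $\Gamma^0$, then sums over locations and scales. But the two load-bearing steps do not work as stated.

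\textbf{The static penalty.} A slender path chooses its own crossing points at the block boundaries. So the blockwise quantities you sum must be defined uniformly over all crossing pairs in the tube before any Chernoff bound applies. The deficit $T(p,q)-T^{\mathrm{tube}}(p,q)$ relative to the local unconstrained value has no such uniform lower bound when the tube width is comparable to the block's transversal scale. For many pairs the local geodesic simply stays inside the tube, so ``a deficit of order $(d/K)^{1/3}$ with probability bounded below'' is not available for the infimum over pairs. Also, being in the tube on most rows of a block does not bound the block weight by a tube-constrained passage value, since the path may leave on its allowed bad rows.

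What works is \cite[Proposition~4.5]{GH23}, Lemma~\ref{sl:lem:sparseinput} in the paper:
\begin{itemize}
\item Measure the \emph{centered} weight against the deterministic term $(y-x)+(k-i)$, not a deficit relative to $T_u^v$.
\item Impose constraints only on a grid of rows of spacing $N\kappa^{3/2}$, with $\kappa\asymp\theta/b$ so that the constraint intervals are a \emph{small} fraction $b$ of the block transversal scale, not $O(1)$.
\item The supremum of the centered crossing weight between consecutive intervals then has negative mean of the block's fluctuation order, ultimately from the negative mean of GUE Tracy--Widom. This sums to $\W_N^0(\pi)\le-d_1\kappa^{-1}$ off probability $e^{-d_2\kappa^{-3/2}}$, uniformly over paths visiting a prescribed collection.
\end{itemize}
Bad rows are then handled by grid offsets. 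At least half the offsets see few bad grid rows, and the count $\exp\{C\chi\log(1/\chi)\kappa^{-3/2}\}$ of omitted constraints is absorbed by taking $\chi$ small. Averaging indicators over offsets avoids a factor $N\kappa^{3/2}$, and Lemma~\ref{sl:lem:endpieces} bounds the discarded end pieces uniformly. This uniformity over all admissible paths in a window is why the paper needs no endpoint boxes, only $O(2^j)$ row windows per scale. Your fixed-endpoint bound would still have to be made uniform within each box.

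\textbf{The FKG transfer.} Your decomposition $B^t=e^{-t}B^0+\sqrt{1-e^{-2t}}\,B'$ is indeed how the comparison extends to $t>0$. However, the decreasing event in that argument is the conditioning one, $\{\Gamma^0=\gamma\}$ at time zero. The target event at time $t$ must be \emph{increasing} in the exterior increments. Neither ``some exterior slender path is a geodesic'' nor ``some slender path has weight at least $T_u^v-\varepsilon$'' is monotone, since $T_u^v$ also moves with those increments. A decreasing target would give the comparison in the useless direction.

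The missing idea is to replace the geodesic property by an absolute threshold.
\begin{itemize}
\item On the weight-regularity event of Lemma~\ref{st:lem:regularityinput} at time $t$, every segment of $\Gamma^t$ with duration in $[N_j/2,N_j]$ has $\W_{N_j}^t\ge-B_j$, where $B_j=CH_j^2$.
\item The requirement $H_j\le\theta_j^{-1/4}$ gives $B_j<c_*\theta_j^{-1}/2$.
\item Hence an actual slender excursion lies in the candidate event $\mathcal A^t_{j,k}$: an exterior slender staircase with endpoints on $\Gamma^0$ and $\W_{N_j}^t\ge-c_*\theta_j^{-1}$.
\item This event is increasing, so conditionally on $\Gamma^0$ its probability is at most the fresh-noise probability, which Proposition~\ref{sl:prop:local} bounds.
\end{itemize}
This needs a simultaneous \emph{lower} bound on geodesic-segment weights, not the upper tail of $T_u^v$.

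It also corrects your bookkeeping. The deterministic-curve bound is $e^{-c\theta_j^{-3/2}}$, summable over windows and scales. The final $e^{-ca^{-3\nu/8}}$ is the regularity failure $e^{-cR_*^3}$ with $R_*=a^{-\nu/8}$, not a $\theta^{-3/8}$ coming from the static estimate.

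Finally, the imported sparse-constraint input needs $N_j\ge C\theta_j^{-300}$. The paper secures this by first reducing to small $\nu$ via wider tubes.
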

\par\endgroup

\begingroup
\begingroup
Importantly, note here that the tolerance $\theta_j$ decreases with $j$:
an excursion at a smaller scale must remain closer to $\Gamma^0$
relative to its natural transversal scale to be called slender.
In \cite[Proposition 9.2 and its proof]{GH24}, the corresponding
tolerance is the same at every scale and decreases with $n$;
that choice makes the error small enough to absorb the counts
over possible locations. Here the tolerance decreases with the
excursion scale itself. Together with the local estimate proved
below, this allows us to sum over locations and scales while
keeping $a$ fixed as $n\to\infty$.

For the overlap argument, narrowing the tubes means that more
excursions must be treated as wide. Corollary~\ref{wd:cor:sum}
allows precisely these scale-dependent separation thresholds
when $0<\nu<1/2$: passage-time stability improves at smaller
scales, and the ratio in \eqref{wd:eq:substitution} still tends
to zero. The two estimates therefore work with the same widths.
The bound \eqref{sl:eq:noslender} depends only on $a$ once
$n$ is sufficiently large; no decay of $a$ with $n$ is required.
\par\endgroup

As in \cite[Section 4]{GH23} and
\cite[Section 9.3]{GH24}, the proof uses static regularity
at the two times and an FKG
comparison after conditioning on $\Gamma^0$. The latter bounds
the conditional probability of any increasing event outside this
geodesic by its probability in a fresh environment. We can therefore
apply the deterministic-path estimate with $\Gamma^0$ as its
reference path. {To the best of the author's knowledge, this use of FKG after
conditioning on a geodesic first appeared in \cite{BSS19}; subsequent
applications include \cite{GH23,GH24,BB24,BB23}.} We record the precise
conditional form needed here in Lemma~\ref{sl:lem:exteriorinput}.
Since this argument does not require passage-time stability,
the conclusion is uniform in the fixed dynamical time $t$.
\par\endgroup
\par\endgroup

\begingroup
\subsection{\texorpdfstring{{Completion of the proof}}{Completion of the proof}}
We now complete the proof of Theorem~\ref{thm:main}, assuming
Proposition~\ref{sl:prop:dynamic}. The main step is to bound the
{expected total duration of long excursions by combining the two
branches of Figure~\ref{fig:argumentflow}: the static twin-peak estimate
for wide excursions and Proposition~\ref{sl:prop:dynamic} for the
remaining slender ones.}

\begin{proof}[Proof of Theorem~\ref{thm:main} assuming Proposition~\ref{sl:prop:dynamic}]
Fix $\beta,\nu$ satisfying \eqref{eq:parameters}. Throughout this
proof we take $t=a n^{-1/3}$, with $a>0$ fixed while $n$ tends
to infinity. By Lemma~\ref{lem:fullreduction}, it is enough to prove
\begin{equation}\label{eq:longdone}
 \lim_{a\downarrow0}\limsup_{n\to\infty}
       \frac{\EE D_{\rm long}(a n^{-1/3})}{n}=0.
\end{equation}
We will obtain this by counting witness rows at every scale.
The local stability event {$\cE_{n,a}$ from Proposition~\ref{st:prop:local}} holds simultaneously for all possible
subsegments, whereas near-peak rarity is used in expectation,
one row and scale at a time. This distinction allows us to sum
the expected contributions without selecting a dominant scale.

\begingroup
\paragraph{\textbf{The classification and the slender contribution.}}
Let $\chi$ be the constant in Proposition~\ref{sl:prop:dynamic},
and choose $0<c_0<\chi/32$ in Proposition~\ref{st:prop:local}.
Fix $\xi\in(0,1/2)$ and write
\[
 \cB_{n,\xi}=\{m\in\ZZ:\xi n\le m\le(1-\xi)n\}.
\]
All these constants are fixed before $n$ tends to infinity.

\begingroup
For a long excursion $E$, let $i_E,k_E$ be its endpoint
rows, let $d_E=k_E-i_E$, and let $\pi_E$ denote its time-$t$
segment. Assign $E$ to the largest $j\ge0$ for which $N_j\ge d_E$.
Since $d_E\in[\lceil n^\beta\rceil,n]\cap\mathbb Z$, this
choice gives $j\in\cI_n$ and $N_j/2<d_E\le N_j$.
At this scale, we use exactly the definition preceding
Proposition~\ref{sl:prop:dynamic}: $E$ is slender if
\par\endgroup
\[
 \#\bigl\{m\in[i_E,k_E]\cap\ZZ:
        |\pi_E(m)-\Gamma^0(m)|>w_j\bigr\}\le\chi d_E,
\]
and we call it wide otherwise. Denoting the respective sums of
row durations by $D_{\rm slender}$ and $D_{\rm wide}$, we have
$D_{\rm long}=D_{\rm slender}+D_{\rm wide}\le n$.

Proposition~\ref{sl:prop:dynamic} now applies directly to every
excursion classified as slender. Since $D_{\rm slender}=0$
unless such an excursion exists, and always $D_{\rm slender}\le n$,
it gives
\begin{equation}\label{eq:slendertotal}
 \frac{\EE D_{\rm slender}}n\le Ce^{-ca^{-3\nu/8}}.
\end{equation}

\paragraph{\textbf{Charging the wide durations to witness rows.}}
For each deterministic scale $j$ and bulk row $m$, let
\begin{equation}\label{eq:rowevent}
 P_{j,m}=\cP_m(3A_j,w_j),
\end{equation}
where $\cP_m$ is the static near-peak event in \eqref{wd:eq:peak}.
The key counting bound is that, for all sufficiently large $n$,
on the local stability event $\cE_{n,a}$,
\begin{equation}\label{eq:widepathwise}
 \frac{\chi}{2}D_{\rm wide}
 \le\sum_{j\in\cI_n}\sum_{m\in\cB_{n,\xi}}\ind_{P_{j,m}}
       +2\xi n+2.
\end{equation}
We prove this by assigning sufficiently many distinct witness
rows to each wide excursion, then summing over the excursions.

Let $\mathfrak X_j$ be the collection of wide excursions assigned
to scale $j$. For $E\in\mathfrak X_j$, define
\[
 S_E=\bigl\{m\in\ZZ:i_E<m<k_E,\quad
               |\Gamma^t(m)-\Gamma^0(m)|>w_j\bigr\}.
\]
The inequalities $i_E<m<k_E$ mean that we use the rows
$i_E+1,\ldots,k_E-1$, excluding the two endpoint rows.
On these rows, the excursion departure $\pi_E(m)$ equals
the full-path departure $\Gamma^t(m)$. Since $E$ is wide,
more than $\chi d_E$ rows satisfy the separation test in its
definition; excluding the two endpoint rows therefore gives
\[
 |S_E|\ge\chi d_E-2.
\]

\begingroup
The static deficit estimate in Lemma~\ref{st:lem:crossing}
requires a little more distance from the ends, and the static
near-peak bound in Corollary~\ref{wd:cor:sum} requires a bulk
row of the original geodesic. Accordingly, define
\par\endgroup
\[
 Q_E=\bigl\{m\in S_E\cap\cB_{n,\xi}:
       m-i_E\ge2c_0N_j,\quad {k_E-m\ge2c_0N_j}\bigr\},
\]
and put
\[
 b_E=\#\bigl(((i_E,k_E)\cap\ZZ)\setminus\cB_{n,\xi}\bigr).
\]
At each end, at most $2c_0N_j+2$ integer rows fail the local
distance condition. It follows that
\[
 \begin{aligned}
 |Q_E|
 &\ge |S_E|-(4c_0N_j+4)-b_E\\
 &\ge\chi d_E-4c_0N_j-6-b_E\\
 &\ge\frac{\chi}{2}d_E-b_E.
 \end{aligned}
\]
For the last inequality, $N_j\le2d_E$ and $c_0<\chi/32$
give $4c_0N_j\le\chi d_E/4$, while $d_E\ge n^\beta$
ensures $6\le\chi d_E/4$ for all sufficiently large $n$.

We next check that every row retained in $Q_E$ is counted on
the right of \eqref{eq:widepathwise}. If $m\in Q_E$, its local
distance conditions are precisely \eqref{st:eq:interior}, so
Lemma~\ref{st:lem:crossing} gives, on $\cE_{n,a}$,
\[
 {T_{\mathbf{0}}^{\mathbf{n},0}}-Z_m^0\bigl(\Gamma^t(m)\bigr)\le3A_j.
\]
Moreover, $m\in S_E$ gives
$|\Gamma^t(m)-\Gamma^0(m)|>w_j$. Thus
$x=\Gamma^t(m)$ witnesses $P_{j,m}$ by its definition.

The open row intervals $(i_E,k_E)$ of distinct excursions
are disjoint. In particular, the sets $Q_E$ are disjoint,
and for each fixed $j,m$ at most one $E\in\mathfrak X_j$
has $m\in Q_E$. Hence, on $\cE_{n,a}$,
\[
 \sum_{j\in\cI_n}\sum_{E\in\mathfrak X_j}|Q_E|
 =\sum_{j\in\cI_n}\sum_{m\in\cB_{n,\xi}}
                  \sum_{E\in\mathfrak X_j}\ind_{\{m\in Q_E\}}
 \le\sum_{j\in\cI_n}\sum_{m\in\cB_{n,\xi}}\ind_{P_{j,m}}.
\]
The same disjointness, now applied to the rows outside the
bulk interval, gives
\[
 \sum_{j\in\cI_n}\sum_{E\in\mathfrak X_j}b_E
 \le\#\bigl((\{1,\ldots,n-1\})\setminus\cB_{n,\xi}\bigr)
 \le2\xi n+2.
\]
Finally, summing $\chi d_E/2\le |Q_E|+b_E$ over all wide
excursions proves \eqref{eq:widepathwise}.
\par\endgroup

\paragraph{\textbf{Expected long-excursion duration.}}
\begingroup
Taking expectations in \eqref{eq:widepathwise} on \(\cE_{n,a}\),
and using \(D_{\rm wide}\le n\) on its complement, gives the
first inequality below. Proposition~\ref{st:prop:local} and
Corollary~\ref{wd:cor:sum} give the second.
{In the next two displays, $c,C$ may depend on
the fixed bulk parameter $\xi$; we keep the boundary contribution
$4\xi/\chi$ explicit.}
\begingroup
\begin{equation}\label{eq:widemean}
 \begin{aligned}
 \frac{\EE D_{\rm wide}}n
 &\le \PP(\cE_{n,a}^c)
   +\frac{2}{\chi n}\sum_{j\in\cI_n}\sum_{m\in\cB_{n,\xi}}
                         \PP(P_{j,m})
   +\frac{4\xi}{\chi}+\frac{4}{\chi n}\\
 &\le Ce^{-c a^{-3/16}}+C a^{\,1/8-\nu/4}
       +\frac{4\xi}{\chi}+\frac{4}{\chi n}+Ce^{-c n^{1/12}}.
 \end{aligned}
\end{equation}
\par\endgroup
\par\endgroup
Together with \eqref{eq:slendertotal}, this proves
\begingroup
\[
 \limsup_{n\to\infty}\frac{\EE D_{\rm long}}n
 \le Ce^{-c a^{-3/16}}+C a^{\,1/8-\nu/4}
       +Ce^{-ca^{-3\nu/8}}+\frac{4\xi}{\chi}.
\]
\par\endgroup
First let $a\downarrow0$, with $\xi$ fixed, and then let
$\xi\downarrow0$. Since $\nu<1/2$, all terms on the right
vanish in this order, establishing \eqref{eq:longdone}.
No independence between rows, excursions, or scales has
been used in this assembly.

\begingroup
\begingroup
By the reduction at the start of the proof, \eqref{eq:longdone}
establishes \eqref{eq:mainprob}, completing the proof.
\par\endgroup
\par\endgroup
\end{proof}
\par\endgroup
\par\endgroup

\begingroup
\section{\texorpdfstring{{Rarity of slender excursions}}{Rarity of slender excursions}}\label{sec:slender}

\begingroup
\begingroup
It remains to prove Proposition~\ref{sl:prop:dynamic}, which excludes
long excursions that remain close to $\Gamma^0$ on almost all their
rows. The argument compares the weight penalty for following a
narrow tube with the weight of an actual geodesic segment. We first
explain this comparison, then establish the estimate for a deterministic
reference curve, and finally apply it to the random curve $\Gamma^0$.
We refine the arguments of \cite[Section 4]{GH23} and
\cite[Section 9]{GH24}, retaining their probabilistic inputs but
changing the counting of sampling grids and endpoint locations so
that a fixed tube parameter remains permitted as the scale grows.
\par\endgroup
\par\endgroup

\begingroup
\begingroup
Throughout this section, paths and their endpoints are in the
original BLPP coordinates. Thus \(\pi(m)\) is the departure
location on integer row \(m\), as defined in
{Section~\ref{sec:model}}.
For any scale \(N>0\) and a staircase \(\pi\) from
\((x,i)\) to \((y,k)\), we define {its centered weight at dynamical time \(t\) by}
\begin{equation}\label{sl:eq:chart}
 {\W_N^t(\pi)
 =N^{-1/3}\bigl[\wgt^t(\pi)-(y-x)-(k-i)\bigr].}
\end{equation}
\begingroup
This is the weight of a specified path. The {optimised} value introduced
before Lemma~\ref{sl:lem:compactweightinput} is therefore
\[
 {W_N^0}(p,q)=\sup_{\pi:p\to q}{\W_N^0}(\pi)
\]
in the static environment.
\par\endgroup
Note that for the same path at another scale \(D\), we have
\begin{equation}\label{sl:eq:weightscale}
 {\W_N^t(\pi)=(D/N)^{1/3}\W_D^t(\pi).}
\end{equation}
{The static estimates below are stated at time zero, with the
superscript \(0\) retained. By stationarity, they also hold at any
fixed dynamical time. We now discuss the weight comparison that
will rule out slender excursions.}

At scale \(N_j\), weight regularity gives every actual excursion
\(\pi\) of \(\Gamma^t\) a lower bound \(-B_j\) for
\(\W_{N_j}^t(\pi)\), because the excursion is itself a geodesic
between its endpoints; the allowance \(B_j\) will be chosen in
\eqref{sl:eq:allowances}. On the other hand,
{Proposition~\ref{sl:prop:local} below shows that, for a deterministic
reference curve, with high probability every path of row duration
in \([N_j/2,N_j]\) that stays within a tube of width of order
\(\theta_jN_j^{2/3}\) around that curve on most rows has centered
weight strictly below \(-c_*\theta_j^{-1}\).} We will ensure that
\(B_j<c_*\theta_j^{-1}\), so an actual slender excursion would satisfy
\[
 \W_{N_j}^t(\pi)\ge-B_j>-c_*\theta_j^{-1}.
\]
It would therefore be a slender path with unusually high weight.

However, the reference curve in this application is \(\Gamma^0\), which is
random and correlated with the time-\(t\) environment. Uniformity
over deterministic reference curves {(Proposition~\ref{sl:prop:local})} does not by itself justify this
substitution. The additional fact we use is that an excursion has
interior disjoint from \(\Gamma^0\). After conditioning on \(\Gamma^0\),
the exterior FKG comparison in Lemma~\ref{sl:lem:exteriorinput}
bounds the probability of a high-weight exterior path by the
corresponding probability in fresh Brownian noise, with the reference
curve held fixed. Consequently, this makes the deterministic-curve estimate
applicable even to the random path \(\Gamma^0\), thereby yielding the rarity of slender excursions.

\subsection{\texorpdfstring{{Slender paths near a deterministic reference curve}}{Slender paths near a deterministic reference curve}}\label{sec:deterministicslender}

\begingroup
We begin with paths that stay close to a deterministic reference
function. Horizontal translations and integer row translations
preserve the law of Brownian increments and the {centered}-weight
{normalisation} in \eqref{sl:eq:chart}. We therefore state the local
estimates with the lower row at zero and the central line $x=m$;
the same estimates hold in any translated window, with unchanged
constants.

Fix $N\in\mathbb N$ and $R\ge1$. Let
$\phi:[0,2N]\cap\mathbb Z\to\mathbb R$ be a deterministic
reference function satisfying
\[
 |\phi(m)-m|\le2RN^{2/3}
 \quad\text{for every }m\in[0,2N]\cap\mathbb Z.
\]
It prescribes a horizontal location on each row, within a strip
around the $\{x=m\}$ passing through the origin. No path structure or modulus
of continuity is assumed for $\phi$.
\par\endgroup

\begingroup
\begingroup
Given $\theta>0$ and $\chi\in(0,1)$, we call a staircase
$\pi$ from $(x,i)$ to $(y,k)$ \emph{admissible} if its
lifetime lies in $[0,2N]$, its row duration $k-i$ belongs
to $[N/2,N]$, and both endpoints lie in the prescribed strip:
\[
 \begin{gathered}
 0\le i\le k\le2N,\qquad N/2\le k-i\le N,\\
 |x-i|\le4RN^{2/3},\qquad |y-k|\le4RN^{2/3}.
 \end{gathered}
\]
\par\endgroup
In addition, the path must follow the much narrower tube around
\(\phi\) on all but a small fraction of its rows:
\[
 \#\{m\in[i,k]\cap\mathbb Z:
             |\pi(m)-\phi(m)|>2\theta N^{2/3}\}
 \le\chi(k-i).
\]
Thus \(R\) controls the broad strip containing the reference
curve and the two endpoints, while \(\theta\) controls how
closely the path follows the reference curve on most rows.
Figure~\ref{fig:admissiblepath} depicts these separate requirements.
\par\endgroup
\par\endgroup

\begin{figure}[tbp]
\centering
\begingroup
\begin{tikzpicture}[x=.87cm,y=1.02cm,font=\small,text=black]
 \fill[black!7] (.5,0)--(5.5,0)--(8.36,5.2)--(3.36,5.2)--cycle;
 \filldraw[fill=yellow!24,draw=witnessgold!65,line width=.35pt] (1.75,0)--(4.25,0)--(7.11,5.2)--(4.61,5.2)--cycle;
 \draw[black!35,dashed] (.5,0)--(3.36,5.2);
 \draw[black!35,dashed] (5.5,0)--(8.36,5.2);
 \draw[guide] (3,0)--(5.86,5.2);
 \filldraw[fill=pathgreen!35,draw=pathgreen!70,line width=.4pt]
 (2.76,0)--(3.16,.8)--(3.46,1.2)--(3.66,1.6)--(3.96,2)
 --(4.06,2.4)--(4.26,2.8)--(4.56,3.2)--(4.76,3.6)
 --(4.96,4)--(5.16,4.4)--(5.46,5.2)
 --(6.14,5.2)--(5.84,4.4)--(5.64,4)--(5.44,3.6)
 --(5.24,3.2)--(4.94,2.8)--(4.74,2.4)--(4.64,2)
 --(4.34,1.6)--(4.14,1.2)--(3.84,.8)--(3.44,0)--cycle;
 \draw[proxypath]
 (3.1,0)--(3.5,.8)--(3.8,1.2)--(4,1.6)--(4.3,2)
 --(4.4,2.4)--(4.6,2.8)--(4.9,3.2)--(5.1,3.6)
 --(5.3,4)--(5.5,4.4)--(5.8,5.2);
 \draw[staticpath]
 (3.2,1.6)--(3.45,1.6)
 --(3.45,2)--(4.42,2)--(4.42,2.4)--(5.16,2.4)
 --(5.16,2.8)--(5.20,2.8)--(5.20,3.2)--(5.22,3.2)
 --(5.22,3.6)--(5.3,3.6)--(5.3,4)--(5.5,4);
 \foreach \x/\y in {4.42/2,5.22/3.2,5.3/3.6,5.5/4}
   \fill[pathblue] (\x,\y) circle (1.4pt);
 \foreach \x/\y in {3.45/1.6,5.16/2.4,5.20/2.8} {
   \draw[pathorange,line width=1.2pt]
     (\x-.085,\y-.085)--(\x+.085,\y+.085)
     (\x-.085,\y+.085)--(\x+.085,\y-.085);
 }
 \fill[pathblue] (3.2,1.6) circle (1.8pt);
 \node[anchor=south east,text=black] at (3.2,1.6) {$(x,i)$};
 \node[anchor=south west,text=black] at (5.5,4) {$(y,k)$};
 \draw[guide] (-.1,1.6)--(3.2,1.6);
 \draw[guide] (-.1,4)--(5.5,4);
 \draw[<->,black!60] (.05,1.6)--(.05,4)
   node[midway,left,align=right] {$d=k-i$\\$N/2\le d\le N$};
 \node[anchor=east,text=black] at (0,0) {$0$};
 \node[anchor=east,text=black] at (2.85,5.2) {$2N$};
 \draw[black!40] (.35,0)--(5.65,0);
 \draw[black!40] (3.21,5.2)--(8.51,5.2);
 \node[anchor=north,text=black] at (3.0,-.10) {line $x=m$};
  \draw[->,black!55] (9.0,5.0)--(6.9,4.9);
  \node[anchor=west,align=left,font=\footnotesize] at (9.05,5.0)
    {{reference strip:}\\{$|\phi(m)-m|$}\\{$\le2RN^{2/3}$}};
  \draw[->,black!55] (9.0,3.4)--(5.35,3.6);
  \node[anchor=west,align=left,font=\footnotesize] at (9.05,3.4)
    {narrow tube:\\$|\pi(m)-\phi(m)|$\\$\le2\theta N^{2/3}$};
  \draw[->,black!55] (9.0,1.8)--(5.25,2.4);
  \node[anchor=west,align=left,font=\footnotesize] at (9.05,1.8)
    {{bad departures:}\\at most $\chi d$\\bad rows};
  \draw[->,black!55] (9.0,.2)--(5.9,.9);
  \node[anchor=west,align=left,font=\footnotesize] at (9.05,.2)
    {{endpoint strip:}\\{$|x-i|,\ |y-k|$}\\{$\le4RN^{2/3}$}};
\end{tikzpicture}
\caption{An admissible path in original BLPP coordinates.
The blue staircase has duration between $N/2$ and $N$ and its
endpoints lie in the {broad light-gray strip}. The dashed reference function
$\phi$ lies in the {smaller pale-yellow strip}. Most departure points
of the staircase lie in the {narrow green tube} around $\phi$;
{the initial point lies outside the tube, and the orange
crosses mark bad departures.} The strip and tube widths,
and the proportion of bad rows, are schematic. The reference
function specifies a location on each integer row; its dashed
interpolation is only a visual guide.}
\label{fig:admissiblepath}
\par\endgroup
\end{figure}

\begingroup
The next proposition is a local strengthening of
\cite[Theorem 1.10]{GH23}. It removes that statement's
restriction $\theta^{-1/4}>C\log N$, allowing a fixed
$\theta>0$ as $N\to\infty$, provided
{$N\ge C\theta^{-300}$}.
\par\endgroup
\begin{proposition}\label{sl:prop:local}
{There are constants \(\chi>0,\theta_*>0,c_*,c,C>0\)
such that the following holds.}
If
\begin{equation}\label{sl:eq:localrange}
 0<\theta\le\theta_*,
 \qquad 1\le R\le\theta^{-1/4},
 \qquad {N\ge C\theta^{-300}},
\end{equation}
then, {uniformly in \(N\) and the deterministic reference
function \(\phi\)},
\begin{equation}\label{sl:eq:localclaim}
 \PP\!\left(\exists\text{ admissible }\pi:
                    {\W_N^0}(\pi)\ge-c_*\theta^{-1}\right)
 \le C e^{-c\theta^{-{3/2}}}.
\end{equation}
In particular, \(\theta\) can be fixed while \(N\to\infty\).
\end{proposition}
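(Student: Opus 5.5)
We follow the mechanism of \cite[Theorem 1.10]{GH23}: a path confined to a tube of relative width $\theta$ over $N$ rows behaves like about $\theta^{-3/2}$ roughly independent pieces of duration $\theta^{3/2}N$. This piece length is chosen because transversal fluctuation over $\theta^{3/2}N$ rows is exactly $\theta N^{2/3}$. Each piece loses weight of order $(\theta^{3/2}N)^{1/3}$ relative to an unconstrained geodesic, so in $N$-normalisation each piece costs order $\theta^{1/2}$. Summed over the pieces, the total penalty is of order $\theta^{-3/2}\cdot\theta^{1/2}=\theta^{-1}$.

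Concretely, we partition $[0,2N]$ into consecutive row blocks of length $\ell=\lfloor\theta^{3/2}N\rfloor$, giving about $2\theta^{-3/2}$ blocks. The condition $N\ge C\theta^{-300}$ guarantees that $\ell$ is a large power of $N$, so the static inputs (Lemma~\ref{sl:lem:compactweightinput} and the regularity estimates of \cite{GH23}) apply at scale $\ell$. Since at most $\chi(k-i)$ rows are bad, and $\chi$ is small, at least half of the blocks inside $[i,k]$ are \emph{good}: most of their rows lie in the tube around $\phi$. On a good block, the path's entry and exit points both lie within $O(\theta N^{2/3})=O(\ell^{2/3})$ of the deterministic locations $\phi(\cdot)$. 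Let $Y_b$ denote the maximal normalised weight $\ell^{-1/3}[\cdots]$ of a block-$b$ crossing between such endpoint windows that stays within the tube on most rows. The tube-constrained, small-ball estimate at fixed relative width (the single-block input of \cite[Section 4]{GH23}, which at scale $\ell$ needs only the constant tube width $O(1)$ in $\ell$-units) gives $\PP(Y_b>-2c_1)\le 1-p$ for some $p>0$. The upper tail of $Y_b$ is $\le Ce^{-cx^{3/2}}$ by Lemma~\ref{sl:lem:compactweightinput}. Blocks use disjoint rows, so the $Y_b$ are independent.

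The weight of an admissible path is at most the sum of its block maxima over good blocks plus unconstrained block maxima over bad blocks and the two boundary blocks. Converting via \eqref{sl:eq:weightscale}, we get $\W_N^0(\pi)\le\theta^{1/2}\sum_b Y_b$. The sum here runs over $\asymp\theta^{-3/2}$ blocks, where $Y_b$ on bad blocks is the unconstrained block supremum, which has mean $O(1)$ and stretched-exponential tails. We then choose $c_*$ and $\chi$ small so that, by Bernstein/Chernoff bounds for independent variables with $e^{-cx^{3/2}}$ upper tails, $\sum_{\text{good}}Y_b\le -c_1\theta^{-3/2}$ and $\sum_{\text{bad}}Y_b\le (c_1/2)\theta^{-3/2}$, except on an event of probability $e^{-c\theta^{-3/2}}$. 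On the complement, every admissible path has $\W_N^0(\pi)\le -(c_1/2)\theta^{-1}$.

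The union must run over the combinatorial data: which blocks are good, their windows, and the endpoint blocks. The set of good blocks has at most $2^{2\theta^{-3/2}}$ choices. Given $\phi$, the window positions are fixed up to $O(1)$ choices per block, and $R\le\theta^{-1/4}$ bounds the endpoint choices polynomially in $\theta^{-1}$. We absorb these counts by taking the Chernoff exponent constant large relative to $\log 2$. This is done by making the deficit threshold a sufficiently small multiple of the mean penalty, using exponential moments of $-Y_b$ on good blocks.

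The main obstacle is the single-block input: a uniform positive probability that the best tube-constrained crossing of an $\ell$-row block has normalised weight at most $-2c_1$. This must hold with endpoints ranging over windows of size $O(\ell^{2/3})$, and the path only needs to be in the tube on most rows. We would take this from the proof of \cite[Theorem 1.10]{GH23} at a single fixed scale, where the restriction $\theta^{-1/4}>C\log N$ is not needed; that restriction arose there only from a global union bound that the block independence above replaces. A second point is that the excursion may leave the tube on a few rows inside a good block. We handle this by defining good blocks as those with at most a $\sqrt\chi$ fraction of bad rows and using the block estimate with that allowance.
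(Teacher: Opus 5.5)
Your heuristic is the mechanism behind the paper's proof, but the block argument breaks at the steps where the work lies. First, the Chernoff step needs the good-block maxima to have strictly negative mean, indeed $\EE e^{\lambda Y_b}<1$ for some $\lambda>0$. The input you propose, $\PP(Y_b>-2c_1)\le1-p$, does not give this: if $Y_b=-2c_1$ with probability $p$ and $Y_b=1$ otherwise, then $\sum_bY_b$ is typically positive. The negative mean is the real probabilistic input; it descends from the negative mean of GUE Tracy--Widom. It is only guaranteed when the endpoint windows are a small fraction of the local transverse scale. With $\ell=\theta^{3/2}N$, your tube half-width $2\theta N^{2/3}=2\ell^{2/3}$ exceeds that scale. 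This is why the paper takes spacing $N\kappa^{3/2}$ with $4\theta\le b\kappa\le8\theta$ in \eqref{sl:eq:gridspacing}. A good departure then lies in a segment $I_v$ of length $2b\kappa N^{2/3}$, a fraction $b$ of the scale $\kappa N^{2/3}$, and the whole sum estimate is imported from \cite[Proposition 4.5]{GH23} as Lemma~\ref{sl:lem:sparseinput}. Second, your entropy count is not absorbed by lowering the deficit threshold. For independent blocks, the probability that $\sum_bY_b\ge0$ is at least $\prod_b\PP(Y_b\ge0)$. So no choice of threshold pushes the rate above a fixed constant set by the law of the block maxima, and nothing guarantees that it beats the $2^{2\theta^{-3/2}}$ choices of good blocks times $O(1)$ window choices per block. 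In the paper the segments are fixed by $\phi$, and the only choice is which of at most $5\chi\kappa^{-3/2}$ grid rows are omitted. This costs $\exp\{C\chi\log(1/\chi)\kappa^{-3/2}\}$, and $\chi$ is chosen small against the fixed rate $d_2$ in \eqref{sl:eq:chichoice}.

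The more serious gap is at the block boundaries, which is exactly where the proposition improves on \cite[Theorem 1.10]{GH23}. A fixed partition does not put the entry and exit of a good block into deterministic windows. A good block may have bad boundary rows, and since $\phi$ has no regularity, monotonicity does not localise the departure there to within $O(\ell^{2/3})$ of $\phi$. Moving each cut to a nearby good row makes the cut rows path-dependent. A union over them costs a factor polynomial in $\ell$ per cut, i.e.\ a loss growing with $N$. That is the kind of loss behind the restriction $\theta^{-1/4}>C\log N$, and independence of disjoint blocks does not remove it. The same problem affects your bad blocks. With entry and exit free in a strip of width $4R\theta^{-1}$ in block units, your bound on each bad-block supremum is of order $(\log\theta^{-1})^{2/3}$, not $O(1)$. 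With up to $\sqrt\chi\,\theta^{-3/2}$ bad blocks, the total $\sqrt\chi\,\theta^{-1}(\log\theta^{-1})^{2/3}$ exceeds the penalty $c_1\theta^{-1}$ as $\theta\to0$ with $\chi$ fixed. The paper avoids these problems by cutting only at good rows of a sampling grid. Each bad row lies in exactly one residue class, so every admissible path leaves at least half of the $N\kappa^{3/2}$ offsets usable (Lemma~\ref{sl:lem:gridcuts}). One high-weight path therefore forces $\cH_r$ for many $r$, and the averaged indicator bound \eqref{sl:eq:keyaverage} replaces a union over offsets. Omitted grid rows are handled inside Lemma~\ref{sl:lem:sparseinput}, where the path still runs between prescribed short segments. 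The two discarded end pieces, whose endpoints are path-dependent, are controlled by the single uniform event $\cU$ of Lemma~\ref{sl:lem:endpieces}. Thus only the $O(\kappa^{-3})$ coarse cut pairs are counted. Your scheme needs some mechanism of this kind to close.
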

To heuristically see why the penalty has order \(\theta^{-1}\), first match
the width of the tube to the transverse scale of a shorter path.
A row duration of order \(\theta^{3/2}N\) has transverse scale
\[
 (\theta^{3/2}N)^{2/3}=\theta N^{2/3}.
\]
Thus a duration-\(N\) path contains order \(\theta^{-3/2}\) pieces
at this shorter scale. The mechanism behind
\cite[Proposition 4.5]{GH23}, imported as
Lemma~\ref{sl:lem:sparseinput}, is that the {optimised} {centered}
weight of a crossing between sufficiently short prescribed intervals
{has negative mean. This originates from the negative mean of the
GUE Tracy--Widom distribution; see \cite[Lemma A.4]{BGHH22}.
Its magnitude, measured in scale-\(N\) units, is}
of order
\[
 N^{-1/3}(\theta^{3/2}N)^{1/3}=\theta^{1/2}.
\]
\begingroup
{The negative mean is first established for fixed endpoints.
Allowing the endpoints to vary within prescribed intervals increases
the maximal weight. To keep its mean negative, those intervals are
chosen to be a sufficiently small fraction of the transverse scale
of the shorter piece. The constant \(b\) in
Lemma~\ref{sl:lem:sparseinput} makes this choice precise.}
\par\endgroup
Summing over order \(\theta^{-3/2}\) constrained pieces therefore
gives a penalty of order
\(\theta^{-3/2}\theta^{1/2}=\theta^{-1}\).
{Lemma~\ref{sl:lem:sparseinput} below gives a probability bound
for this total weight when the constraint intervals are prescribed
deterministically, even if a small fraction of the constraint rows
are omitted. To apply it to an admissible path, we must select
constraint intervals from the rows on which the path is close to
the reference curve. The sampling argument below does this without
introducing a counting factor that grows with \(N\).}
\par\endgroup

\begingroup
{We will use three ingredients in the proof of
Proposition~\ref{sl:prop:local}.} Lemma~\ref{sl:lem:sparseinput},
imported from \cite{GH23}, supplies the weight penalty for a
deterministic collection of sparse constraints. Lemma~\ref{sl:lem:gridcuts}
shows that each admissible path gives such a collection for at
least half the translations of a sampling grid. Finally,
Lemma~\ref{sl:lem:endpieces} bounds the weights of the two pieces
discarded when cutting to the first and last {good grid rows}.
We state these ingredients below and then prove
Proposition~\ref{sl:prop:local}, postponing the proofs of
Lemmas~\ref{sl:lem:gridcuts} and~\ref{sl:lem:endpieces} to the
next two subsections.
\par\endgroup

\begingroup
\begin{lemma}[Prescribed sparse constraints; adapted from
{\cite[Proposition 4.5]{GH23}}]\label{sl:lem:sparseinput}
There are constants \(\kappa_0,\chi_0\in(0,1/2]\) and
\(b,d_1,d_2,C>0\) with the following property. Let
\(N,D\in\mathbb N\) and \(0<\kappa<\kappa_0\) satisfy\footnotemark
\[
 \begin{gathered}
 N/3\le D\le N,\qquad N\kappa^{3/2}\in\mathbb N,\qquad
 \frac{D}{N\kappa^{3/2}}\in\mathbb N,\\
 N\ge C\kappa^{-195}.
 \end{gathered}
\]
\footnotetext{{Although not listed in
\cite[Proposition 4.5]{GH23}, the assumption
\(r\ge C_0\kappa^{-195}\) appears in Lemmas 4.9 and 4.14
there, whose endpoint-oscillation estimates are used in its proof.
{The bound ensures that consecutive constraint intervals
are not too far apart horizontally, relative to the diagonal and
their row separation, to apply \cite[Proposition 4.4]{GH23}.}
After the change of coordinates below, the displayed bound on
\(N\) is sufficient for both lemmas.}}
\begingroup
Let $\mathcal C$ be a deterministic collection of horizontal
segments of length $2b\kappa N^{2/3}$, on distinct rows of
\[
 (N\kappa^{3/2}\mathbb Z)\cap[0,D],
\]
\par\endgroup
and we assume that the collection $\mathcal C$ includes the two endpoint rows and omits at most
\(\chi_0D/(N\kappa^{3/2})\) of the intermediate grid rows.
Suppose also that every point \((x,m)\) of every segment satisfies
{\(|x-m|\le2D^{2/3+1/20}\)}. Let \(\Pi(\mathcal C)\)
comprise the staircases starting in the lowest segment, ending
in the highest segment, and visiting every segment in \(\mathcal C\).
Then, we have
\[
 \PP\!\left(\sup_{\pi\in\Pi(\mathcal C)}{\W_N^0}(\pi)
                    \ge-d_1\kappa^{-1}\right)
 \le e^{-d_2\kappa^{-3/2}}.
\]
\end{lemma}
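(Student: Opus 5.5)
The statement is an adaptation of \cite[Proposition 4.5]{GH23}. The plan is therefore to reduce it to that proposition by a change of coordinates, rather than to reprove the underlying sparse-constraint mechanism. In \cite{GH23} the estimate is phrased in scaled coordinates: a path of unit duration, constraint intervals of width comparable to $\kappa$ (in units of the transverse scale), placed on a grid of mesh $\kappa^{3/2}$. I would apply the map $(x,m)\mapsto\bigl((x-m)/(2N^{2/3}),\,m/N\bigr)$ used in the proof of Lemma~\ref{sl:lem:compactweightinput}, with scaling parameter $r=N$.

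Under this map:
\begin{itemize}
\item the grid rows $(N\kappa^{3/2}\ZZ)\cap[0,D]$ become the mesh-$\kappa^{3/2}$ grid on $[0,D/N]\subset[1/3,1]$;
\item segments of length $2b\kappa N^{2/3}$ become intervals of length $b\kappa$;
\item the centered weight $\W_N^0$ becomes the scaled weight of \cite{GH23}, up to the parabolic correction.
\end{itemize}
The divisibility hypotheses $N\kappa^{3/2}\in\mathbb N$ and $D/(N\kappa^{3/2})\in\mathbb N$ make the grid exactly a lattice containing both endpoint rows. Since $D\ge N/3$, the total duration is bounded above and below, so the cited proposition applies with constants depending only on this ratio. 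Rescaling between scales $N$ and $D$ via \eqref{sl:eq:weightscale} changes the thresholds only by a factor in $[3^{-1/3},1]$, which is absorbed into $d_1$.

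The next step is to check that the omission allowance matches. At most $\chi_0 D/(N\kappa^{3/2})$ omitted intermediate rows is a fraction $\chi_0$ of the $D/(N\kappa^{3/2})$ grid rows, which is the sparse-collection hypothesis of \cite[Proposition 4.5]{GH23} for a suitable $\chi_0$. The horizontal confinement $|x-m|\le 2D^{2/3+1/20}$ becomes $|x'|\le N^{1/20}$ in scaled coordinates. This places the segments in the window where the cited estimates hold, and it makes the parabolic correction to $\W_N^0$ between consecutive constraints, of size $(\text{displacement})^2/\text{duration}$, controllable. Along a path visiting all the segments, the correction is nonnegative, so ignoring it only strengthens the bound $\sup\W_N^0\ge -d_1\kappa^{-1}$. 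The conclusion $e^{-d_2\kappa^{-3/2}}$ is then exactly the cited tail, which comes from summing $\kappa^{-3/2}$ roughly independent negative-mean increments of size $\kappa^{1/2}$.

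I expect the main obstacle to be the implicit lower bound on $N$, recorded in the footnote. The proof of \cite[Proposition 4.5]{GH23} invokes Lemmas 4.9 and 4.14 there, whose endpoint-oscillation estimates need $r\ge C_0\kappa^{-195}$. They also need consecutive constraint intervals to have horizontal offset small relative to their row gap, so that \cite[Proposition 4.4]{GH23} applies to each piece. I would verify this as follows:
\begin{itemize}
\item Each piece has duration $N\kappa^{3/2}$ (or a multiple of it when rows are omitted).
\item The allowed horizontal offset is at most $4D^{2/3+1/20}$.
\item The offset relative to the piece's transverse scale $(N\kappa^{3/2})^{2/3}$ is therefore at most $CN^{1/20}\kappa^{-1}$.
\item The cited slope condition holds once $N\ge C\kappa^{-195}$, after tracking the exponent.
\end{itemize}
If the exponent bookkeeping fails, the fallback is to enlarge the constant $195$. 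Downstream use only needs some polynomial bound, and Proposition~\ref{sl:prop:local} already assumes $N\ge C\theta^{-300}$.
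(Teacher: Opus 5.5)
Your reduction to \cite[Proposition 4.5]{GH23} by a change of coordinates is the route the paper takes. However, the step you treat as routine is exactly where the adaptation lies, and as written it does not go through. Rescaling at $N$ puts the path on the scaled time interval $[0,D/N]$ with $D/(N\kappa^{3/2})$ grid steps. The cited proposition, as the paper uses it, concerns paths spanning unit scaled time at a scale parameter $r$, and requires $\kappa^{3/2}\in r^{-1}\mathbb N$ and $\kappa^{-3/2}\in\mathbb N$. Your claim that it ``applies with constants depending only on this ratio'' is precisely what has to be justified. Rescaling at $D$ while keeping $\kappa$ does not fix this either: the mesh becomes $N\kappa^{3/2}/D$ and the scaled segment length becomes $b\kappa(N/D)^{2/3}$. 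This is also why your threshold factor $[3^{-1/3},1]$ is wrong.

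The missing idea is to change the parameter along with the scale. Apply the proposition at scale $D$ with $\widehat\kappa=\kappa(N/D)^{2/3}$, so that $\widehat\kappa^{3/2}D=N\kappa^{3/2}$ and $\widehat\kappa D^{2/3}=\kappa N^{2/3}$. Then:
\begin{itemize}
\item your two divisibility hypotheses become exactly $\widehat\kappa^{3/2}\in D^{-1}\mathbb N$ and $\widehat\kappa^{-3/2}\in\mathbb N$;
\item the segments have scaled length $b\widehat\kappa$ in the horizontal coordinate $(x-m)/(2D^{2/3})$, and the confinement reads $|(x-m)/(2D^{2/3})|\le D^{1/20}$;
\item since $\kappa\le\widehat\kappa\le3^{2/3}\kappa$, you can shrink $\kappa_0$ and deduce $D\ge C'\widehat\kappa^{-195}$ from $N\ge C\kappa^{-195}$ and $D\ge N/3$.
\end{itemize}
Converting back with \eqref{sl:eq:weightscale}, the threshold $d_1'\widehat\kappa^{-1}$ at scale $D$ equals $d_1'(D/N)\kappa^{-1}\ge(d_1'/3)\kappa^{-1}$ at scale $N$. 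The tail exponent is $d_2'\widehat\kappa^{-3/2}=d_2'(D/N)\kappa^{-3/2}\ge(d_2'/3)\kappa^{-3/2}$. So the cited tail is not literally the stated one; you take $d_1=d_1'/3$ and $d_2=d_2'/3$.

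The omission allowance also needs more than ``a suitable $\chi_0$''. Write $\chi_{\mathrm{src}}$ for the upper bound on the allowance in the cited proposition. The allowance $\widehat\chi$ used there must satisfy $\widehat\chi<\chi_{\mathrm{src}}$, $2\widehat\chi\widehat\kappa^{-3/2}\in\mathbb N$ and $\widehat\chi\ge2\widehat\kappa^{3/2}$. It therefore depends on $\widehat\kappa$, whereas $\chi_0$ must be a fixed constant. The paper takes $3\chi_0<\chi_{\mathrm{src}}$, shrinks $\kappa_0$ so that $3\kappa_0^{3/2}\le\chi_0$, and picks $\widehat\chi$ as a multiple of $\widehat\kappa^{3/2}/2$ in $[2\chi_0,3\chi_0]$. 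Since at most $\chi_0D/(N\kappa^{3/2})=\chi_0\widehat\kappa^{-3/2}$ rows are omitted, the cited count then holds.

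What you flag as the main obstacle is not one. The condition $N\ge C\kappa^{-195}$ is simply the hypothesis of \cite[Lemmas 4.9 and 4.14]{GH23}, transported through the change of parameter above, so nothing needs re-deriving. Your fallback of enlarging the exponent $195$ would prove a weaker statement than the lemma. Likewise, no parabolic correction arises: centering by $(y-x)+(k-i)$ is the linear centering of the scaled weight in \cite{GH23}, up to a fixed normalisation factor.
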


\begin{proof}
We express \cite[Proposition 4.5]{GH23} in original BLPP
coordinates, absorbing its fixed weight-{normalisation} factor
into the constants. To apply it at the actual duration \(D\),
use the parameter \(\widehat\kappa=\kappa(N/D)^{2/3}\).
It describes exactly the prescribed row spacing and segment length:
\[
 \widehat\kappa^{3/2}D=N\kappa^{3/2},\qquad
 \widehat\kappa D^{2/3}=\kappa N^{2/3}.
\]
\begingroup
The assumptions \(N\kappa^{3/2}\in\mathbb N\) and
\(D/(N\kappa^{3/2})\in\mathbb N\) give
\[
 \widehat\kappa^{3/2}\in D^{-1}\mathbb N,
 \qquad \widehat\kappa^{-3/2}\in\mathbb N,
\]
as required in the cited proposition. Its horizontal coordinate
is \((x-m)/(2D^{2/3})\), so our intervals of length
\(2b\kappa N^{2/3}\) become intervals of length
\(b\widehat\kappa\), explaining the factor \(2\) in our statement.
\par\endgroup
Since
\(\kappa\le\widehat\kappa\le3^{2/3}\kappa\), decreasing
\(\kappa_0\) ensures its upper bound on \(\widehat\kappa\).
Increasing \(C\) also ensures the sufficient condition
\(D\ge C'\widehat\kappa^{-195}\) used in its proof.

\begingroup
Let \({\chi_{\rm src}}>0\) denote the upper bound on the parameter
\(\chi\) in \cite[Proposition 4.5]{GH23}. Choose
\(3\chi_0<{\chi_{\rm src}}\), and then decrease \(\kappa_0\)
so that \(3\kappa_0^{3/2}\le\chi_0\). Since
\(\widehat\kappa^{3/2}\le3\kappa_0^{3/2}\le\chi_0\),
the interval \([2\chi_0,3\chi_0]\) contains a multiple
\(\widehat\chi\) of \(\widehat\kappa^{3/2}/2\). This choice satisfies
\[
 0<\widehat\chi<{\chi_{\rm src}},\qquad
 2\widehat\chi\widehat\kappa^{-3/2}\in\mathbb N,\qquad
 \widehat\chi\ge2\chi_0\ge2\widehat\kappa^{3/2}.
\]
The collection omits at most \(\chi_0\widehat\kappa^{-3/2}\)
rows and hence obeys the required count with allowance
\(\widehat\chi\). Finally, its confinement bound becomes
\(|(x-m)/(2D^{2/3})|\le D^{1/20}\) in {the coordinates of \cite{GH23}}.
\par\endgroup

\begingroup
Let \(d_1',d_2'>0\) be the weight and tail constants supplied
by the cited estimate. Since \(D\ge N/3\) and
\(\widehat\kappa\le3^{2/3}\kappa\), the conversion
\eqref{sl:eq:weightscale} gives
\[
 d_1'(D/N)^{1/3}\widehat\kappa^{-1}
       \ge\frac{d_1'}3\kappa^{-1},\qquad
 d_2'\widehat\kappa^{-3/2}
       \ge\frac{d_2'}3\kappa^{-3/2}.
\]
Thus \(d_1=d_1'/3\) and \(d_2=d_2'/3\) give the asserted bound.
\par\endgroup
\end{proof}

{To apply Lemma~\ref{sl:lem:sparseinput} in the proof
of Proposition~\ref{sl:prop:local}, we must extract a collection
of prescribed intervals visited by an admissible path. Such a path follows} \(\phi\) on most integer rows, but
its bad rows could be concentrated on one fixed grid of spacing
\(N\kappa^{3/2}\). We therefore consider every integer translation
of that grid: few bad rows overall will ensure that many
translations sample mostly good rows. {Whereas the proof of
\cite[Lemma 4.15]{GH23} selects one suitable translation, we use
the fact that at least half of the translations work. This allows
us to average over translations rather than take a union bound.}
For each such translation,
we keep the portion between the first and last {good grid rows}.
Its duration \(D\) may differ from \(N\), which is why the
preceding lemma allows the whole range \([N/3,N]\). No change
of sampling parameter is needed when we apply it.
\par\endgroup
\begingroup
We choose the constant \(\chi\) in Proposition~\ref{sl:prop:local}
once and for all so that
\begin{equation}\label{sl:eq:chichoice}
 {15\chi\le\chi_0,\qquad \chi\le1/50,\qquad
 C\chi\log(1/\chi)<d_2/2,}
\end{equation}
where \(\chi_0,d_2\) come from Lemma~\ref{sl:lem:sparseinput}
and \(C\) is the absolute constant in the counting bound
\eqref{sl:eq:entropy} below. We then choose \(\theta_*\)
sufficiently small in terms of these fixed constants. These
choices are independent of \(N,R,\theta,\phi\).
Fix \(N,R,\theta,\phi\) as in Proposition~\ref{sl:prop:local}.
With \(b\) as in Lemma~\ref{sl:lem:sparseinput}, choose \(\kappa\) so that
\par\endgroup
\begin{equation}\label{sl:eq:gridspacing}
 N\kappa^{3/2}\in\mathbb N,\qquad
 4\theta\le b\kappa\le8\theta,\qquad
 \kappa^{3/2}\le\chi/100.
\end{equation}
\begingroup
{To make this choice, take the integer spacing \(N\kappa^{3/2}\)
in the interval}
\[
 {\left[N(4\theta/b)^{3/2},\,N(8\theta/b)^{3/2}\right].}
\]
{Its length is a fixed positive constant times \(N\theta^{3/2}\),
which is at least one under \eqref{sl:eq:localrange}, so such an
integer exists. The resulting \(\kappa\) satisfies
\(4\theta\le b\kappa\le8\theta\), and hence \(\kappa\asymp\theta\).
Decreasing \(\theta_*\) ensures \(\kappa^{3/2}\le\chi/100\).
This last condition controls the loss from aligning the path's
endpoint rows with a sampling grid: for a path of row duration
\(d\ge N/2\), one
grid spacing satisfies \(N\kappa^{3/2}\le\chi d/50\).
It is absorbed into the discarded-duration bound
\eqref{sl:eq:cutdistances} in the proof of
Lemma~\ref{sl:lem:gridcuts}.}
\par\endgroup For each offset
\(r\in\{0,\ldots,N\kappa^{3/2}-1\}\), set
\[
 \begin{aligned}
 \mathcal R_r&=(r+N\kappa^{3/2}\mathbb Z)\cap[0,2N],\\
 I_v&=[\phi(v)-b\kappa N^{2/3},\phi(v)+b\kappa N^{2/3}]
       \times\{v\},\qquad v\in\mathcal R_r.
 \end{aligned}
\]
\begingroup
For a path \(\pi\), call a row \(v\) good if
\(|\pi(v)-\phi(v)|\le2\theta N^{2/3}\); at a {good grid row},
its departure point lies in \(I_v\). Each bad row belongs to exactly
one translated grid, so few bad rows overall force a small
bad-row fraction for many offsets. The following lemma makes
this precise. For each such offset, we keep the portion of
\(\pi\) between its departures on the first and last {good grid rows}, discarding the two end pieces. The lemma controls
both the duration of this central portion and the constraints
it visits.
\par\endgroup

\begin{lemma}[Many usable sampling grids]\label{sl:lem:gridcuts}
Let \(\pi\) be an admissible path {in the setting of
Proposition~\ref{sl:prop:local}}, with endpoint rows \(i,k\)
and duration \(d=k-i\in[N/2,N]\). {For at least half the offsets
\(r\in\{0,\ldots,N\kappa^{3/2}-1\}\), the set of good rows in
\(\mathcal R_r\cap[i,k]\) has at least two elements.
Let \(v_-\) and \(v_+\) be its smallest and largest elements.
The portion \(\pi_{\rm c}\) between the departures at these rows satisfies}
\[
 D:=v_+-v_-\in[N/3,N],\qquad
 \max\{v_--i,k-v_+\}\le5\chi d.
\]
This path visits both endpoint segments and all but at most
{\(5\chi\kappa^{-3/2}\)} of the intermediate segments \(I_v\),
\(v\in\mathcal R_r\cap(v_-,v_+)\).
\end{lemma}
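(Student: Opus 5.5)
The proof is a deterministic counting argument by averaging over offsets. Write \(g=N\kappa^{3/2}\) for the grid spacing, so \(g\le\chi d/50\) by \eqref{sl:eq:gridspacing} and \(d\ge N/2\). Let \(\mathcal B\) be the set of bad rows of \(\pi\) in \([i,k]\); admissibility gives \(|\mathcal B|\le\chi d\). Each integer row lies in exactly one translated grid \(\mathcal R_r\), so \(\sum_r|\mathcal B\cap\mathcal R_r|\le\chi d\). By Markov's inequality over the uniformly chosen offset \(r\), at least half of the offsets satisfy
\[
 |\mathcal B\cap\mathcal R_r|\le2\chi d/g .
\]
These are the offsets we keep.

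For a kept offset, I would first control where the first and last good grid rows sit. The grid rows of \(\mathcal R_r\) in \([i,k]\) are spaced by \(g\). If the first \(q\) of them are all bad, then \(q\le2\chi d/g\). Hence the first good grid row \(v_-\) satisfies
\[
 v_--i\le g(q+1)\le2\chi d+g\le2\chi d+\chi d/50\le5\chi d,
\]
and symmetrically \(k-v_+\le5\chi d\). Since \(\chi\le1/50\), we get \(10\chi d<d\). So \(v_-<v_+\), and good grid rows exist with at least two elements. This also gives \(D\ge d(1-10\chi)\ge(N/2)(4/5)\ge N/3\), while \(D\le d\le N\) is immediate.

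Visiting the segments is also direct. At a good grid row \(v\) the departure satisfies \(|\pi(v)-\phi(v)|\le2\theta N^{2/3}\le b\kappa N^{2/3}/2\), so \((\pi(v),v)\in I_v\). The central portion \(\pi_{\rm c}\) runs from the departure at \(v_-\) to the departure at \(v_+\), so it starts in \(I_{v_-}\) and ends in \(I_{v_+}\). On each intermediate row \(v\), \(\pi_{\rm c}\) contains the departure point \((\pi(v),v)\). The only segments it can miss are those at bad grid rows, and there are at most
\[
 2\chi d/g\le2\chi N/(N\kappa^{3/2})=2\chi\kappa^{-3/2}\le5\chi\kappa^{-3/2}
\]
of them.

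The main obstacle is only a bookkeeping one: making the interpretation of ``visits'' and the endpoint-row conventions consistent with \(\Pi(\mathcal C)\) in Lemma~\ref{sl:lem:sparseinput}. The central portion must start and end at the departure points on rows \(v_\pm\), which lie in the end segments, so the relevant interval containment is at the departure coordinate. I would also check the remaining conditions needed downstream, such as \(D/g\in\mathbb N\). That one holds automatically because \(v_\pm\in\mathcal R_r\).
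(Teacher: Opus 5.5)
Your proposal is correct and follows essentially the paper's argument. Both proofs share the grid-offset averaging and its consequences:
\begin{itemize}
\item Each bad row lies in exactly one translated grid, so averaging over offsets gives at least half the offsets with few bad grid rows. Your Markov factor $2$ is slightly sharper than the paper's $4$.
\item The cut distances are at most $(B_r+1)N\kappa^{3/2}\le5\chi d$.
\item At a good grid row, the departure lies in $I_v$ because $2\theta N^{2/3}\le b\kappa N^{2/3}/2$.
\end{itemize}
The only point to tidy is the order of the logic. The existence of the first good grid row should be checked before you bound $v_--i$. It holds because $2\chi d+N\kappa^{3/2}<d$ places the $(q+1)$-st grid row inside $[i,k]$. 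The paper handles this differently, by directly counting at least $(1-4\chi)d/(N\kappa^{3/2})-1\ge2$ good grid rows.
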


\begingroup
{We prove Lemma~\ref{sl:lem:gridcuts} in
Section~\ref{sec:samplingproof}. It controls the central portion
of the path, as illustrated in Figure~\ref{fig:gridcuts}.
The next input bounds both discarded end pieces, uniformly over
their endpoints, even when their row duration is very short.
Its proof is given in Section~\ref{sec:endpieceproof}.}

\begin{lemma}[Uniform control of the end pieces]\label{sl:lem:endpieces}
{There is an absolute constant \(C_1>0\) such that,
for every fixed \(\delta\in(0,1]\), the conditions
\eqref{sl:eq:localrange} of Proposition~\ref{sl:prop:local}, with
\(\theta\) sufficiently small depending on \(\delta\), imply the
existence of an event \(\cU\) with
\(\PP(\cU^c)\le C_\delta e^{-c_\delta\theta^{-3/2}}\), where
\(c_\delta,C_\delta>0\) depend only on \(\delta\), on which}
\[
 {{\W_N^0}(\sigma)\le2C_1\delta\theta^{-1}}
\]
simultaneously for every path segment \(\sigma\) whose endpoints
\((x,i),(y,k)\) satisfy
\[
 i,k\in[0,2N]\cap\mathbb Z,\qquad 0\le k-i\le N,\qquad
 \max\{|x-i|,|y-k|\}\le4RN^{2/3}.
\]
\end{lemma}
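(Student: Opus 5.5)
The statement to prove is Lemma~\ref{sl:lem:endpieces}. I would split segments by row duration into a long range and a short range, and apply Lemma~\ref{sl:lem:compactweightinput} on a bounded number of dyadic sub-scales for each.

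\textbf{Long segments.} First fix the threshold duration $N_\delta=\delta^{3}N$, chosen so that the natural normalised fluctuation at that scale is $(N_\delta/N)^{1/3}=\delta$ in scale-$N$ units. For segments with $N_\delta\le k-i\le N$, apply Lemma~\ref{sl:lem:compactweightinput} at each dyadic scale $N'=N2^{-\ell}$ with $2^{-\ell}\ge\delta^3/2$. There are $O(\log(1/\delta))$ such scales. I would cover the rows $[0,2N]$ by $O(2^\ell)$ translated windows of height $3N'$, which is allowed by translation invariance. The parameter choices are:
\begin{itemize}
\item spatial parameter $R'=R(N/N')^{2/3}\le C_\delta\theta^{-1/4}$;
\item threshold $\zeta=\theta^{-1}$.
\end{itemize}
Converting by \eqref{sl:eq:weightscale}, the centred weight satisfies
\[
{\W_N^0}\le (N'/N)^{1/3}C(\zeta+R'^2)\le C(\theta^{-1}+C_\delta\theta^{-1/2}),
\]
which is not small compared with $\delta\theta^{-1}$. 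So the naive choice is too lossy.

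\textbf{Correct use of the threshold.} The bound to aim for is ${\W_N^0}\le 2C_1\delta\theta^{-1}$. Only an upper bound on ${\W_N^0}$ is needed, and the parabolic term $R'^2$ in Lemma~\ref{sl:lem:compactweightinput} only lowers the weight. However, that lemma gives an absolute-value bound, so one can simply take $\zeta$ at scale $N'$ equal to $\delta\theta^{-1}(N/N')^{1/3}$. Then the $\zeta$-contribution in scale-$N$ units is exactly $C\delta\theta^{-1}$. The $R'^2$ term is at most $C\delta^{-4}R^2\le C\delta^{-4}\theta^{-1/2}$, which is at most $\delta\theta^{-1}$ once $\theta$ is small depending on $\delta$. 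The failure probability per window is
\[
CR'^2e^{-c\zeta^{3/2}}\le C_\delta\theta^{-1/2}e^{-c\delta^{3/2}\theta^{-3/2}}.
\]
Summing over the $O(\delta^{-3})$ windows and the $O(\log(1/\delta))$ scales gives $C_\delta e^{-c_\delta\theta^{-3/2}}$. It remains to check the ranges $\zeta\le cN'^{1/30}$ and $R'\le cN'^{1/46}$. These follow from $N\ge C\theta^{-300}$ and $N'\ge\delta^3N/2$, after shrinking $\theta$ depending on $\delta$. Rows near $2N$ are covered by adding one extra window, and the endpoint-strip condition $4RN^{2/3}$ fits into the strips $4R'N'^{2/3}$ after recentring each window.

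\textbf{Short segments.} For $0\le k-i<N_\delta$, I would not handle these directly. Instead, I would bound the weight of a short segment by the last-passage value over a duration-$N_\delta$ block containing it, plus a correction. The key observation is that for a segment $\sigma$ from $(x,i)$ to $(y,k)$, the passage value is superadditive: extend $\sigma$ by a geodesic to an endpoint at row $i+N_\delta$. The extension's weight is controlled from below by the long-segment bound applied in absolute value at scale $N_\delta$. This gives ${\W_N^0}(\sigma)\le C\delta\theta^{-1}$ as well, with the lower bound on the extension from the same event. The horizontal displacement of the extension is chosen along the strip so that its endpoint stays within $4R'N'^{2/3}$. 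For durations below one row, or when $y-x$ is tiny, I would use the Brownian modulus of continuity over a box of size $N^{2/3}$ on the relevant row; this is standard Gaussian control.

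\textbf{Main obstacle.} I expect the main difficulty to be this short-segment extension step. Specifically, the extension must keep the combined endpoints inside the compact family of Lemma~\ref{sl:lem:compactweightinput}, and the sign of the centering term $-(y-x)-(k-i)$ must be handled for segments that are very flat, that is, with large horizontal length relative to the number of rows.
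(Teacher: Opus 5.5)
There is a genuine gap, and it sits in the short-segment step, which is where the real content of the lemma lies. Your long-segment argument is valid, though more elaborate than necessary.

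\textbf{Where the short-segment step fails.} You extend $\sigma\colon p=(x,i)\to q'=(y,k)$ to a point $q''$ on the fixed row $i+N_\delta$. The connecting piece from $q'$ to $q''$ then has row duration $N_\delta-(k-i)$. As $k-i$ ranges over $[0,N_\delta)$, this duration covers all of $(0,N_\delta]$, so it can be a single row. Such a piece is outside the range $[N_\delta,N]$ covered by your long-segment event, so that event gives no lower bound on $W_N^0(q',q'')$. Without that lower bound, the inequality
\[
 \W_N^0(\sigma)\le W_N^0(p,q'')-W_N^0(q',q'')
\]
yields nothing. If $q''$ is not on the slope-one line through $q'$, the centred value of a one-row piece with horizontal span of order $RN^{2/3}$ is about $-RN^{1/3}$ in scale-$N$ units. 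Even with slope-one placement, you would need a lower bound holding simultaneously over all short pieces at all locations, which is the original problem again.

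Your fallback for zero-duration segments also fails. A Brownian modulus bound on each of the $2N+1$ rows, combined by a union bound, puts a factor of order $N$ in the failure probability. The lemma requires $C_\delta e^{-c_\delta\theta^{-3/2}}$ uniformly in $N$, and $N\ge C\theta^{-300}$ is unbounded, so that factor cannot be absorbed.

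\textbf{The missing idea.} Extend by a fixed duration measured from $q'$, rather than to a fixed row: take $q''=(y+N,k+N)$. Then for every $0\le k-i\le N$, including $k=i$, both pairs $(p,q'')$ and $(q',q'')$ satisfy:
\begin{itemize}
\item row gap in $[N,2N]$;
\item rows in $[0,3N]$;
\item the strip condition $\max\{|x-i|,|y-k|\}\le4RN^{2/3}$, since the slope-one shift preserves $y-k$.
\end{itemize}
So one application of Lemma~\ref{sl:lem:compactweightinput} at scale $2N$, with $\zeta=\delta\theta^{-1}$, bounds $|W_N^0|=2^{1/3}|W_{2N}^0|$ by $C_1\zeta$ on this whole family. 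The $R^2$ term is absorbed because $R^2\le\theta^{-1/2}\le\delta\theta^{-1}$ once $\theta\le\delta^2$. The failure probability is $CR^2e^{-c\zeta^{3/2}}\le C_\delta e^{-c_\delta\theta^{-3/2}}$. Superadditivity then gives $\W_N^0(\sigma)\le2C_1\delta\theta^{-1}$ for all segments at once.

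With this, neither your dyadic multiscale treatment nor the separate Gaussian argument is needed. The obstacle you flagged about the sign of the centring for flat segments is also not an issue. The centring $(y-x)+(k-i)$ is affine in the endpoints, so it is exactly additive under concatenation for any extension point. The only constraint on $q''$ is that both pairs lie in the compact family.
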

\par\endgroup
\begingroup
We now complete the proof of Proposition~\ref{sl:prop:local},
postponing the proofs of Lemmas~\ref{sl:lem:gridcuts}
and~\ref{sl:lem:endpieces} to the next two subsections.
\par\endgroup
\begingroup
\begin{proof}[Proof of Proposition~\ref{sl:prop:local}
{assuming Lemmas~\ref{sl:lem:gridcuts} and~\ref{sl:lem:endpieces}}]
We first bound the probability of a high-weight central path
for one sampling grid. We then use Lemmas~\ref{sl:lem:gridcuts}
and~\ref{sl:lem:endpieces} to show that an admissible high-weight
path forces this event for at least half the offsets.

\paragraph{{\textbf{Step 1. A prescribed collection of constraints.}}} 
For a fixed offset \(r\), we first list all possible central cut
rows and collections of visited segments allowed by
Lemma~\ref{sl:lem:gridcuts}. Let
\begin{equation}\label{sl:eq:cutpairs}
 \mathcal P_r=\{(v_-,v_+)\in\mathcal R_r^2:
                    N/3\le v_+-v_-\le N\}.
\end{equation}
\begingroup
For each pair, write \(D=v_+-v_-\), and let
\(\mathcal C_r(v_-,v_+)\) comprise the subcollections of
\(\{I_v:v\in\mathcal R_r\cap[v_-,v_+]\}\) that contain
both endpoint segments and omit at most
{\(5\chi\kappa^{-3/2}\)} intermediate segments.
These families are deterministic; an element specifies segments
to be visited, rather than crossing points inside them.

Fix a pair \((v_-,v_+)\in\mathcal P_r\) and a collection
\(\mathfrak c\in\mathcal C_r(v_-,v_+)\). Let
\(\Pi(\mathfrak c)\) be the staircases starting in its lowest
segment, ending in its highest segment, and visiting every segment
of \(\mathfrak c\).
\begingroup
{We apply Lemma~\ref{sl:lem:sparseinput} after translating by
\((-v_-,-v_-)\), so the lower cut row becomes zero. A point
\((x,v)\) becomes \((x-v_-,v-v_-)\); its horizontal displacement
from the diagonal is unchanged because
\((x-v_-)-(v-v_-)=x-v\). This translation also preserves
centered weights and the law of the Brownian increments.} The duration, grid and interval-length
conditions hold by construction. Since \(D\ge N/3\) and
\(15\chi\le\chi_0\), the number of omitted rows is at most
\(5\chi\kappa^{-3/2}\le\chi_0D/(N\kappa^{3/2})\).
For every \((x,v)\in I_v\),
\[
 |x-v|\le4RN^{2/3}\le2D^{2/3+1/20}.
\]
Here we use the reference-function bound, \(b\kappa\le8\theta\),
\(R\le\theta^{-1/4}\), \(N\ge C\theta^{-300}\), and
\(D\ge N/3\). These bounds and \(\kappa\asymp\theta\)
also ensure \(N\ge C\kappa^{-195}\) and \(\kappa<\kappa_0\)
for sufficiently small \(\theta_*\).
\par\endgroup

Since \(b\kappa\le8\theta\), the lemma's threshold magnitude
satisfies \(d_1\kappa^{-1}\ge(d_1b/8)\theta^{-1}\).
We therefore fix \(c_1=d_1b/8\) and obtain directly
\begin{equation}\label{sl:eq:GHcore}
 \PP\!\left(\sup_{\pi\in\Pi(\mathfrak c)}{\W_N^0}(\pi)
                         \ge-c_1\theta^{-1}\right)
 \le e^{-d_2\kappa^{-3/2}}.
\end{equation}
\par\endgroup

\paragraph{{\textbf{Step 2. Counting collections within one grid.}}}
Define
\begin{equation}\label{sl:eq:Hrdefinition}
 \cH_r=
 \bigcup_{(v_-,v_+)\in\mathcal P_r}
 \ \bigcup_{\mathfrak c\in\mathcal C_r(v_-,v_+)}
 \left\{\sup_{\pi\in\Pi(\mathfrak c)}{\W_N^0}(\pi)
                              \ge-c_1\theta^{-1}\right\}.
\end{equation}
This is a fixed-offset version of the
\(\operatorname{HighSlenderWeight}\) event preceding
\cite[Lemma 4.15]{GH23}. It already allows every possible
pair of central cut rows in our local window.

\begingroup
There are at most \(C\kappa^{-3}\le C\theta^{-3}\) pairs
in \(\mathcal P_r\). For each pair, the list has at most
\(\kappa^{-3/2}\) intermediate grid rows, and a collection
is specified by omitting at most \(5\chi\kappa^{-3/2}\) of
them. The elementary binomial bound therefore gives
\begin{equation}\label{sl:eq:entropy}
 |\mathcal C_r(v_-,v_+)|
 \le\exp\{C\chi\log(1/\chi)\kappa^{-3/2}\}.
\end{equation}
\par\endgroup
{By \eqref{sl:eq:chichoice},
\(C\chi\log(1/\chi)<d_2/2\).} Equations
\eqref{sl:eq:GHcore}--\eqref{sl:eq:entropy} then imply
\begin{equation}\label{sl:eq:Hrprob}
 \PP(\cH_r)\le C\theta^{-3}e^{-c\theta^{-3/2}}
             \le Ce^{-c'\theta^{-3/2}},\qquad 0\le r<N\kappa^{3/2}.
\end{equation}
The choices counted here are the {grid cut rows} and the
omitted constraints. Their numbers depend on \(\theta\),
not on the number \(N\) of microscopic rows. The uniform
end-piece event in Lemma~\ref{sl:lem:endpieces} is what permits us to count these
coarse cut rows instead of the original endpoints.

\paragraph{{\textbf{Step 3. Averaging over the usable offsets.}}}
{Set \(c_*=c_1/4\) and fix \(\delta\in(0,1]\) so small that}
\begin{equation}\label{sl:eq:deltachoice}
 {4C_1\delta\le c_1/4.}
\end{equation}
Apply Lemma~\ref{sl:lem:endpieces} with this choice, decreasing
\(\theta_*\) if necessary. Denote the target event by
\[
 E=\{\exists\text{ admissible }\pi:
                       {\W_N^0}(\pi)\ge-c_*\theta^{-1}\}.
\]
On \(E\cap\cU\), fix a witnessing path \(\pi\).
By Lemma~\ref{sl:lem:gridcuts}, for at least half the offsets \(r\),
the central path visits a collection in \(\mathcal C_r(v_-,v_+)\).
\begingroup
At each good cut row \(v\),
\[
 |\pi(v)-v|\le|\phi(v)-v|+2\theta N^{2/3}
             \le4RN^{2/3},
\]
since \(R\ge1\) and \(\theta\le1\).
{The original endpoints satisfy the same bound
by admissibility. Thus each discarded piece has endpoints in
the strip required by Lemma~\ref{sl:lem:endpieces} and row
duration at most \(N\). On \(\cU\), that lemma bounds the
{centered} weight of each piece by \({2C_1\delta\theta^{-1}}\).}
\par\endgroup
{By weight additivity, subtracting these two
weights from \({\W_N^0}(\pi)\) gives}
\[
 {\W_N^0}(\pi_{\rm c})
 \ge-\frac{c_1}{4}\theta^{-1}-{4C_1\delta\theta^{-1}}
 \ge-\frac{c_1}{2}\theta^{-1},
\]
{by \eqref{sl:eq:deltachoice}. For each usable offset \(r\),
the cut pair \((v_-,v_+)\) lies in \(\mathcal P_r\), and
\(\pi_{\rm c}\) visits a collection
\(\mathfrak c\in\mathcal C_r(v_-,v_+)\).
Its weight is at least \(-c_1\theta^{-1}/2\), which exceeds
the threshold \(-c_1\theta^{-1}\) in
\eqref{sl:eq:Hrdefinition}. Thus this central path witnesses
\(\cH_r\). The end-piece bounds on \(\cU\) hold for all cuts
simultaneously, so the argument applies to every usable offset.
There are at least half as many usable offsets as total offsets,
giving the pathwise implication and indicator bound}
\begin{equation}\label{sl:eq:keyaverage}
 \begin{gathered}
 E\cap\cU\subseteq
 \left\{\sum_{r=0}^{N\kappa^{3/2}-1}\ind_{\cH_r}\ge N\kappa^{3/2}/2\right\},\\
 \ind_{E\cap\cU}\le\frac{2}{N\kappa^{3/2}}\sum_{r=0}^{N\kappa^{3/2}-1}\ind_{\cH_r}.
 \end{gathered}
\end{equation}
Taking expectations and using Lemma~\ref{sl:lem:endpieces}
and \eqref{sl:eq:Hrprob} gives
\[
 \begin{aligned}
 \PP(E)&\le\PP(\cU^c)+\frac{2}{N\kappa^{3/2}}\sum_{r=0}^{N\kappa^{3/2}-1}\PP(\cH_r)\\
 &{\le Ce^{-c\theta^{-3/2}}.}
 \end{aligned}
\]
{This proves \eqref{sl:eq:localclaim}.}
Averaging over the usable offsets
has removed the factor \(N\kappa^{3/2}\asymp\theta^{3/2}N\) that
a union over offsets would introduce. No independence between
offsets is used.

\end{proof}
\par\endgroup

\subsection{Proof of the sampling lemma}\label{sec:samplingproof}
\begingroup
The proof of \cite[Lemma 4.15]{GH23} selects one compatible
offset. Here we retain at least half of all offsets. A single
admissible high-weight path will therefore force many of the
fixed-offset events \(\cH_r\), allowing us to average their
indicators instead of taking a union over offsets. This avoids
the factor \(N\kappa^{3/2}\) and is one of the changes that
removes the logarithmic restriction on \(\theta\).

We now prove the deterministic sampling statement used above.
The key observation is that each bad row belongs to exactly one
translated grid, so the total number of bad rows controls the
fraction of unusable offsets.

\begin{proof}[Proof of Lemma~\ref{sl:lem:gridcuts}]
For each offset \(r\), let \(B_r(\pi)\) count the {bad grid rows} in \(\mathcal R_r\cap[i,k]\). The residue classes partition
the rows, so admissibility gives
\begin{equation}\label{sl:eq:shiftaverage}
 \sum_{r=0}^{N\kappa^{3/2}-1}B_r(\pi)\le\chi d.
\end{equation}
Consequently, at least half the offsets satisfy
\begin{equation}\label{sl:eq:goodshift}
 B_r(\pi)\le\frac{4\chi d}{N\kappa^{3/2}}.
\end{equation}
Indeed, otherwise the sum in \eqref{sl:eq:shiftaverage} would
exceed \(2\chi d\). These offsets may depend on the path,
but the count holds deterministically for every admissible path.

{Fix an offset satisfying \eqref{sl:eq:goodshift}.
The number of good grid rows in \([i,k]\) is at least
\((1-4\chi)d/(N\kappa^{3/2})-1\), which is at least two by
\eqref{sl:eq:chichoice} and \eqref{sl:eq:gridspacing}.
Let \(v_-,v_+\) be the first and last such rows.}

{Starting at \(i\), reaching the first grid row costs less
than one spacing \(N\kappa^{3/2}\), even if that row is good.
If it is bad, each move to the next grid row costs one further
spacing. There are at most \(B_r(\pi)\) bad grid rows before
\(v_-\). The same argument, read backwards from \(k\), applies
to \(v_+\). Hence}
\begin{equation}\label{sl:eq:cutdistances}
 {\begin{aligned}
 \max\{v_--i,k-v_+\}
 &\le\bigl(B_r(\pi)+1\bigr)N\kappa^{3/2}\\
 &\le4\chi d+N\kappa^{3/2}\le5\chi d.
 \end{aligned}}
\end{equation}
{The \(1\) accounts for the initial distance to a grid row;
the factor \(N\kappa^{3/2}\) converts the number of grid steps
into a row duration. The second inequality uses
\eqref{sl:eq:goodshift}, and the last uses
\(N\kappa^{3/2}\le\chi d/50\), which follows from
\eqref{sl:eq:gridspacing} and \(d\ge N/2\).
Since \(\chi\le1/50\) by \eqref{sl:eq:chichoice}, this implies}
\begin{equation}\label{sl:eq:central}
 {D=v_+-v_-\in[N/3,N].}
\end{equation}
\begingroup
For the number of omitted constraints, it is enough to use
\(d\le N\) directly:
\[
 B_r(\pi)\le\frac{4\chi d}{N\kappa^{3/2}}
          \le4\chi\kappa^{-3/2}\le5\chi\kappa^{-3/2}.
\]
\par\endgroup
The central path starts and ends in the segments on its good
cut rows, and visits the prescribed segment at every intervening
{good grid row}. The bound on \(B_r(\pi)\) therefore gives the
claimed number of omitted segments.
Figure~\ref{fig:gridoffsets} illustrates this deterministic count.

\end{proof}
\par\endgroup

\begin{figure}[tbp]
\centering
\begin{tikzpicture}[x=1cm,y=.38cm]
 \node[anchor=west] at (-.1,13.2) {\textbf{(a)} Departure positions};
 \node[anchor=west] at (4,13.2) {\textbf{(b)} Four grid offsets};
 \fill[pathblue!9] (.49,0)--(.77,2)--(1.05,4)--(1.33,6)--(1.61,8)
   --(2.03,11)--(2.35,11)--(1.93,8)--(1.65,6)--(1.37,4)
   --(1.09,2)--(.81,0)--cycle;
 \draw[pathblue,line width=.8pt] (.65,0)--(.93,2)--(1.21,4)
   --(1.49,6)--(1.77,8)--(2.19,11);
 \node[above,font=\footnotesize] at (1.7,11.2) {tube around $\phi$};
 \foreach \y in {0,...,11} {
   \draw[black!12] (0,\y)--(10.2,\y);
   \node[left,font=\scriptsize] at (0,\y) {$\y$};
 }
 \foreach \x/\y in {.64/0,.80/1,.99/2,1.30/4,1.39/5,1.55/6,1.70/7,2.00/9,2.10/10,2.21/11}
   \fill[pathgreen] (\x,\y) circle (2pt);
 \foreach \x/\y in {1.28/3,1.99/8} {
   \draw[pathorange,line width=1pt] (\x-.06,\y-.15)--(\x+.06,\y+.15)
     (\x-.06,\y+.15)--(\x+.06,\y-.15);
 }
 \node[anchor=west,font=\footnotesize,pathorange] at (2.4,8.1) {bad row};
 \foreach \x/\r in {4.4/0,6/1,7.6/2,9.2/3} {
   \draw[guide] (\x,-.3)--(\x,11.5);
   \node[above] at (\x,11.6) {$r=\r$};
 }
 \foreach \x/\y in {4.4/0,4.4/4,6/1,6/5,6/9,7.6/2,7.6/6,7.6/10,9.2/7,9.2/11}
   \fill[pathgreen] (\x,\y) circle (2.5pt);
 \foreach \x/\y in {4.4/8,9.2/3}
   \draw[pathorange,line width=1pt] (\x-.1,\y-.2)--(\x+.1,\y+.2)
     (\x-.1,\y+.2)--(\x+.1,\y-.2);
 \node[below,align=center,font=\footnotesize] at (6.8,-.8)
   {Each bad row is counted for exactly one offset.};
 \node[below,align=center,font=\footnotesize] at (1.4,-.8)
   {Green: inside the tube.};
\end{tikzpicture}
\caption{The deterministic grid-offset count, shown with
\(N\kappa^{3/2}=4\) for illustration. The dots on the left are departure
positions in original coordinates on successive
integer rows; their horizontal locations are nondecreasing.
They do not depict the horizontal pieces of a staircase.
Each column on the right samples one residue
class modulo \(N\kappa^{3/2}\). Since the bad rows are partitioned
among the offsets, few bad rows in total imply a small
bad-row fraction for at least half the offsets, with the
allowance in \eqref{sl:eq:goodshift}. For each such offset
the path is cut at its first and last {good grid rows}.
One high-weight path therefore forces many events
\(\cH_r\), which gives the averaged indicator bound
\eqref{sl:eq:keyaverage}.}
\label{fig:gridoffsets}
\end{figure}

\begin{figure}[tbp]

\centering
\begin{tikzpicture}[x=.9cm,y=.72cm]
 \node[anchor=west] at (-.2,7) {\textbf{(a)} One translated sampling grid};
 \fill[pathblue!8] (1,0)--(1.4,2)--(1.8,4)--(2.2,6)
       --(3.2,6)--(2.8,4)--(2.4,2)--(2,0)--cycle;
 \draw[pathblue] (1.5,0)--(1.9,2)--(2.3,4)--(2.7,6);
 \node[pathblue,above] at (2.7,6.1) {$\phi$};
 \foreach \y in {1,...,5} {
   \draw[guide] (0,\y)--(4.4,\y);
   \draw[pathblue,line width=2pt] ({1.1+.2*\y},\y)--({1.9+.2*\y},\y);
 }
 \draw[dynamicpath,densely dashed] (.85,0)--(1.0,1)--(2.0,2);
 \draw[dynamicpath] (2.0,2)--(2.3,3)--(2.35,4);
 \draw[dynamicpath,densely dashed] (2.35,4)--(3.3,5)--(3.5,6);
 \foreach \x/\y in {2/2,2.3/3,2.35/4} \fill[pathgreen] (\x,\y) circle (2.5pt);
 \foreach \x/\y in {1.0/1,3.3/5} {
  \draw[pathorange,line width=1pt] (\x-.1,\y-.1)--(\x+.1,\y+.1)
                          (\x-.1,\y+.1)--(\x+.1,\y-.1);
 }
 \node[left] at (0,0) {$i$}; \node[left] at (0,6) {$k$};
 \node[left] at (0,2) {$v_-$}; \node[left] at (0,4) {$v_+$};
 \draw[<->] (4.8,2)--node[right,font=\footnotesize] {$N\kappa^{3/2}$ rows}(4.8,3);
 \draw[<->] (1.7,3.3)--(2.5,3.3);
 \node[anchor=east,font=\footnotesize,black,fill=white,inner sep=1pt] at (1.35,3.4) {$2b\kappa N^{2/3}$};
 \draw[guide] (1.38,3.3)--(1.65,3.3);
 \begin{scope}[xshift=7.3cm]
  \node[anchor=west] at (-.3,7) {\textbf{(b)} The three pieces};
  \draw[dynamicpath,densely dashed] (1,0)--(1,2);
  \draw[dynamicpath,line width=2pt] (1,2)--(1,4);
  \draw[dynamicpath,densely dashed] (1,4)--(1,6);
  \foreach \y/\lab in {0/i,2/v_-,4/v_+,6/k} {
    \fill (1,\y) circle (2pt); \node[left] at (.85,\y) {$\lab$};
  }
  \node[anchor=west,align=left,text width=3.8cm] at (1.6,1)
       {Discard the initial piece.\\Its weight is at most ${2C_1\delta\theta^{-1}}$.};
  \node[anchor=west,align=left,text width=3.8cm] at (1.6,3)
       {Retain $\pi_{\rm c}$.\\{It visits the segments at all good grid rows.}};
  \node[anchor=west,align=left,text width=3.8cm] at (1.6,5)
       {Discard the final piece.\\The same weight bound applies.};
 \end{scope}
\end{tikzpicture}
\caption{Sampling and cutting for a fixed offset $r$. The short blue
segments are the prescribed intervals $I_v$, separated by $N\kappa^{3/2}$ rows.
Green points lie in the tube and orange crosses mark {bad grid rows}.
The cuts are at the first and last {good grid rows}, $v_-$ and $v_+$.
The central part visits all good prescribed intervals between them.
Lemma~\ref{sl:lem:endpieces} bounds both discarded weights uniformly,
even though their endpoints depend on the path.}
\label{fig:gridcuts}
\end{figure}

\subsection{Proof of the end-piece bound}\label{sec:endpieceproof}
\begingroup
{We now prove Lemma~\ref{sl:lem:endpieces} using the
compact-region weight estimate of Lemma~\ref{sl:lem:compactweightinput}.
That estimate applies when the row duration is comparable to $N$.
We extend each short end piece to put it within this range,
as in the proof of \cite[Lemma 4.15]{GH23}.}
\par\endgroup

\begin{proof}[Proof of Lemma~\ref{sl:lem:endpieces}]
\begingroup
We use the extension argument from \cite[Lemma 4.15]{GH23}
with a uniform weight event on the local window.
{Fix \(\delta\in(0,1]\) and set
\(\zeta=\delta\theta^{-1}\)}. Let \(\mathcal Q_{N,R}\) be the
deterministic set of ordered pairs \(p=(x,i),q'=(y,k)\) with
\begingroup
\[
 \begin{gathered}
 i,k\in[0,3N]\cap\mathbb Z,\qquad N\le k-i\le2N,\\
 \max\{|x-i|,|y-k|\}\le4RN^{2/3}.
 \end{gathered}
\]
\par\endgroup
Define
\begin{equation}\label{sl:eq:U}
 \cU=\cU_{N,R}(\zeta)
 =\left\{\sup_{(p,q')\in\mathcal Q_{N,R}}
                   |{W_N^0}(p,q')|\le C_1\zeta\right\}.
\end{equation}
{For \(\theta\le\delta^2\), the assumption
\(R\le\theta^{-1/4}\) gives
\(R^2\le\theta^{-1/2}\le\delta\theta^{-1}=\zeta\).}
{Apply Lemma~\ref{sl:lem:compactweightinput}
at scale \(2N\), so that its row-gap range is exactly \([N,2N]\).
Since \({W_N^0}(p,q')=2^{1/3}{W_{2N}^0}(p,q')\), a sufficiently
large choice of \(C_1\) gives}
\begingroup
\begin{equation}\label{sl:eq:Uprob}
 \PP(\cU^c)\le CR^2e^{-c\zeta^{3/2}}
             \le{C_\delta e^{-c_\delta\theta^{-3/2}}}.
\end{equation}
\par\endgroup
{The requirements \(R\le cN^{1/46}\) and
\(C\le\zeta\le cN^{1/30}\), as well as the ordering of endpoints
with the prescribed row gaps, follow from} \(R\le\theta^{-1/4}\),
{\(\zeta=\delta\theta^{-1}\) and
\(N\ge C\theta^{-300}\), for sufficiently small \(\theta\)
depending on \(\delta\)}.

{Let \(\sigma\) be a segment as in the statement, and note that its row duration
may be arbitrarily short. Write its endpoints
as \(p=(x,i)\), \(q'=(y,k)\), and extend the latter}
along slope one to \(q''=(y+N,k+N)\). Both pairs
\((p,q'')\) and \((q',q'')\) belong to \(\mathcal Q_{N,R}\),
so superadditivity gives, on \(\cU\),
\begin{equation}\label{sl:eq:endcap}
 {\W_N^0}(\sigma)
 \le {W_N^0}(p,q'')-{W_N^0}(q',q'')
 \le2C_1\zeta{=2C_1\delta\theta^{-1}}.
\end{equation}
This controls even zero-row-duration end pieces, without a union
over their microscopic endpoints.
\par\endgroup
\end{proof}
\par\endgroup

\begingroup
\subsection{\texorpdfstring{{Slender excursions in the dynamical environment}}{Slender excursions in the dynamical environment}}\label{sec:slenderapplication}

We now implement the weight comparison described at the start of
the section. Proposition~\ref{sl:prop:local} provides the slender-path
penalty. The two remaining tasks are to apply it with \(\Gamma^0\)
as the reference curve, using the exterior FKG comparison, and to
handle the unknown excursion location and scale. We state the
conditional comparison below. For the locations, at scale \(N_j\)
we cover the row interval by \(O(n/N_j)=O(2^j)\) windows.
Recall from \eqref{st:eq:scales} that
\[
 \theta_j=\bigl(a(N_j/n)^{1/3}\bigr)^\nu
          \asymp a^\nu2^{-j\nu/3}.
\]
The resulting error \(e^{-c\theta_j^{-{3/2}}}\) decreases fast
enough to sum over these windows and all scales. Regularity
only costs powers of \(1+j\), which are also dominated by
the increasing penalty \(\theta_j^{-1}\).

Here is the conditional form of the {FKG comparison} that
we use. For a deterministic staircase \(\gamma\), write
\(\operatorname{Ext}(\gamma)=(\mathbb R\times\mathbb Z)\setminus\gamma\).
A functional is increasing in the exterior increments if
increasing every Brownian increment along a horizontal
interval in \(\operatorname{Ext}(\gamma)\) cannot decrease it.

\begin{lemma}[{FKG comparison outside the geodesic}; {\cite[Lemma 9.6 and its proof]{GH24}}]\label{sl:lem:exteriorinput}
Let \(\widetilde B\) be a fresh Brownian environment independent
of the stationary OU process \((B^s)_{s\ge0}\). Fix \(t\ge0\).
For any measurable family of bounded nonnegative functionals
\(H_\gamma\), each depending only on Brownian
increments along horizontal intervals in \(\operatorname{Ext}(\gamma)\),
and increasing in those increments,
\[
 \EE\bigl[H_{\Gamma^0}(B^t)\mid\Gamma^0=\gamma\bigr]
 \le \EE\bigl[H_\gamma(\widetilde B)\bigr]
 \quad\text{for almost every }\gamma.{\footnotemark}
\]
\footnotetext{{Here ``almost every'' is with respect to the law of
the random staircase \(\Gamma^0\). The conditional expectation is
interpreted through a regular conditional distribution given
\(\Gamma^0=\gamma\); the exceptional set of reference staircases has
probability zero under this law.}}
In particular, this applies to the event that a
\(\gamma\)-dependent prescribed family of paths, with interiors
disjoint from \(\gamma\), contains a path exceeding a given
{centered}-weight threshold.
\end{lemma}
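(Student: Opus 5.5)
The plan is to represent the time-$t$ environment as a monotone function of the time-zero environment and independent noise, then apply Harris--FKG to the time-zero increments off $\gamma$, conditioned on $\Gamma^0=\gamma$.

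\textbf{Reduction to time zero.} By the OU covariance, we can write, independently for every row,
\[
 B^t=e^{-t}B^0+\sqrt{1-e^{-2t}}\,W,
\]
where $W$ is an independent environment of two-sided Brownian motions. The event $\{\Gamma^0=\gamma\}$ is measurable with respect to $B^0$. So, conditionally on $\Gamma^0=\gamma$, the noise $W$ keeps its unconditioned law. The functional $H_\gamma$ depends only on increments of $B^t$ along horizontal intervals in $\operatorname{Ext}(\gamma)$. Each such increment is $e^{-t}$ times the corresponding $B^0$-increment plus an independent term, so $H_\gamma(B^t)$ is, for fixed $W$, increasing in the exterior increments of $B^0$. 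It therefore suffices to prove a stochastic domination at time zero: conditionally on $\Gamma^0=\gamma$, the family of $B^0$-increments on $\operatorname{Ext}(\gamma)$ is dominated, in the FKG sense for increasing functionals, by a fresh Brownian family. Integrating over $W$ then gives the bound with $\widetilde B$ in place of $e^{-t}\widetilde B^0+\sqrt{1-e^{-2t}}W$, since the latter is itself a fresh environment.

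\textbf{Static domination.} We split the time-zero environment into two independent pieces, by independence of Brownian increments on disjoint intervals: the increments along the horizontal segments of $\gamma$, and the exterior increments. Fix the on-$\gamma$ data. The event that $\gamma$ is the geodesic says that every competing staircase has weight at most $\wgt^0(\gamma)$. A competitor's weight is nondecreasing in the exterior increments it traverses, and $\wgt^0(\gamma)$ does not involve them. Hence this event is \emph{decreasing} in the exterior increments. Harris--FKG for independent Gaussian increments, applied via a finite-dimensional discretisation of the exterior Brownian paths and a monotone limit, says that conditioning on a decreasing event of positive probability lowers the expectation of an increasing functional. Averaging over the on-$\gamma$ data then gives the claim.

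\textbf{Main obstacle: conditioning on a null event.} The hard part is to make ``conditioning on $\Gamma^0=\gamma$'' rigorous, since this event has probability zero. I would parametrise $\gamma$ by its jump coordinates $z_1\le\cdots\le z_n$. Then I would condition on a small cell $\{\Gamma^0\text{ has jumps within }\varepsilon\text{ of }z\}$, together with the increments of $B^0$ on a slightly enlarged $\gamma$. On such a cell I would still need a monotone description in the exterior. Two routes are available. The first is to replace the event by ``some staircase with jumps in the cell beats all staircases with a jump outside it'', and check that it is decreasing in increments off the enlarged set. The second, following the proof of \cite[Lemma 9.6]{GH24}, is to disintegrate the law of $B^0$ along the jump coordinates: the conditional density of the exterior increments is then proportional to the indicator of the decreasing set described above. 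Finally, I would let $\varepsilon\downarrow0$, using continuity of the bounded functional $H_\gamma$ and almost sure uniqueness of $\Gamma^0$, to identify the limit with the regular conditional distribution for almost every $\gamma$. The special case of a high-weight prescribed family of paths follows because a supremum of centered weights over paths with interiors in $\operatorname{Ext}(\gamma)$ is increasing in exterior increments.
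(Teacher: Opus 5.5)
Your proposal follows essentially the same route as the paper, which defers to \cite[Lemma 9.6]{GH24}. In both, the argument uses the monotone OU coupling $B^t=e^{-t}B^0+\sqrt{1-e^{-2t}}\,W$ with stationarity of fresh noise, fixes $\gamma$ and its on-curve increments, applies FKG to the exterior increments under the decreasing event that $\gamma$ is optimal, and then averages over the on-curve data.

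One caution on your null-event step. Even after fixing the on-curve increments, $\{\Gamma^0=\gamma\}$ is still null, because near each jump point the exterior Brownian piece must stay below the adjacent on-curve piece. So there is no conditional density proportional to an indicator, contrary to your second route. The domination has to be passed to the limit from positive-probability decreasing approximations via a monotone coupling. Your first route instead relies on continuity of $H_\gamma$, which the lemma does not assume.
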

{
\begin{proof}[Derivation of the conditional formulation]
\begingroup
This is the conditional form of the exterior comparison in
\cite[Lemma 9.6 and its proof]{GH24}. Their construction fixes
the reference staircase and its on-curve noise before applying
FKG to the exterior increments. Averaging over the on-curve
noise therefore gives the comparison conditional only on
\(\Gamma^0=\gamma\), with \(H_\gamma\) allowed to depend on
that fixed curve. The same proof already extends the comparison
to every \(t\ge0\), using the monotonicity of the OU transition
and stationarity of fresh Brownian noise. Applying this
conditional domination to \(H_\gamma\) gives the displayed
inequality. No weight-regularity or stability event enters
the conditioning.
\par\endgroup
\end{proof}
}
\begingroup
We finally prove {Proposition~\ref{sl:prop:dynamic}}, whose definitions and
statement were given in Section~\ref{sec:assembly}.
\par\endgroup
\begin{proof}[Proof of {Proposition~\ref{sl:prop:dynamic}}]
\begingroup
{\textbf{Step 1. Reduction to wider tubes.}}
We begin with a reduction that lets us keep the tube exponent small
throughout the proof. Write \(\mathcal S_n^t(a,\nu)\) for the event
in \eqref{sl:eq:noslender}, displaying its width parameters.
For arbitrary \(\nu>0\), choose \(0<\nu_0<\nu\) with
\(100\nu_0(1-\beta)<\beta\), and set \(a_0=a^{\nu/\nu_0}\).
Since \(N_j/n\le1\),
\[
 \bigl[a_0(N_j/n)^{1/3}\bigr]^{\nu_0}
 =a^\nu(N_j/n)^{\nu_0/3}
 \ge\bigl[a(N_j/n)^{1/3}\bigr]^\nu.
\]
Thus the tubes for \((a_0,\nu_0)\) are wider. Every excursion
slender for \((a,\nu)\) is also slender for \((a_0,\nu_0)\),
with the same exceptional-row fraction \(\chi\), so
\[
 \mathcal S_n^t(a,\nu)\subseteq\mathcal S_n^t(a_0,\nu_0).
\]
An estimate proved for \((a_0,\nu_0)\) therefore gives the desired
bound, because \(a_0^{-3\nu_0/8}=a^{-3\nu/8}\). The two events
use the same scales and geodesics \(\Gamma^0,\Gamma^t\): only the
tube widths change, and the dynamical time \(t\) is unchanged.
For sufficiently small \(a\), the parameter \(a_0\) is in the
required range, and a lower bound on \(n\) depending on \(a_0\)
is allowed by the statement.

It therefore suffices to prove the result under the restriction
\begin{equation}\label{sl:eq:nuchoice}
 100\nu(1-\beta)<\beta,
\end{equation}
which we assume for the remainder of the proof. This will ensure
the condition \(N\ge C\theta^{-300}\) in
Proposition~\ref{sl:prop:local} at every retained scale.
Fix \(a\) and the deterministic dynamical time \(t\), and abbreviate
\(\mathcal S_n^t(a,\nu)\) to \(\mathcal S_n^t\). The tube widths
remain \(\theta_j=[a(N_j/n)^{1/3}]^\nu\), even when
\(t\ne a n^{-1/3}\).
\par\endgroup
We first define, in each row window, the event of an actual
slender excursion and the larger family of candidate paths used
in the weight comparison. We then use regularity to relate these
events and the exterior comparison to bound the candidate event.

\paragraph{\textbf{{Step 2}. Actual excursions and candidate paths in row windows.}}
For \(j\in\cI_n\), define the deterministic row windows
and their index set by
\begin{equation}\label{sl:eq:rowwindows}
 \begin{split}
 I_{j,k}&=[kN_j,(k+2)N_j]\cap[0,n],\\
 \mathcal K_j&=\{k\in\mathbb Z:0\le k\le\lfloor n/N_j\rfloor,
                                      \ |I_{j,k}|\ge N_j/2\}.
 \end{split}
\end{equation}
Here \(|I_{j,k}|\) is interval length.
\begin{figure}[tbp]

\centering
\begin{tikzpicture}[x=1cm,y=.67cm]
 \node[anchor=west] at (-.3,8.5) {\textbf{(a)} Covering an excursion lifetime};
 \foreach \y/\lab in {0/kN_j,3/(k+1)N_j,6/(k+2)N_j,9/(k+3)N_j} {
  \ifnum\y<9
   \draw[guide] (0,\y)--(5.3,\y);
   \node[left,font=\footnotesize] at (0,\y) {$\lab$};
  \fi
 }
 \draw[pathblue,line width=2pt] (.8,0)--(.8,6);
 \node[below,pathblue,font=\footnotesize] at (.8,-.15) {$I_{j,k}$};
 \draw[pathgreen,line width=2pt] (1.55,3)--(1.55,7);
 \node[above,pathgreen,font=\footnotesize] at (1.55,7) {$I_{j,k+1}$};
 \draw[staticpath] (2.3,0)--(2.3,1)--(2.7,1)--(2.7,2)
 --(3.1,2)--(3.1,3)--(3.5,3)--(3.5,4)--(4,4)--(4,5)--(4.5,5)--(4.5,6);
 \draw[dynamicpath] (2.7,1.6)--(3.5,1.6)--(3.5,2.5)
 --(4,2.5)--(4,4.2);
 \fill (2.7,1.6) circle (2pt) node[above left] {$i$};
 \fill (4,4.2) circle (2pt) node[above right] {$i+d$};
 \node[anchor=north west,align=left,text width=4.8cm,font=\footnotesize] at (.2,-1.1)
 {$k=\lfloor i/N_j\rfloor$ and $d\le N_j$ place the whole lifetime inside $I_{j,k}$.};
   \begin{scope}[xshift=7cm]
    \node[anchor=west] at (-.1,8.5) {\textbf{(b) After translation}};
    \fill[pathblue!7] (-1.1,0)--(1.6,6)--(3.8,6)--(1.1,0)--cycle;
    \fill[pathgreen!15]
      (-.75,0)--(-.4,1)--(-.25,1.5)--(-.1,2)--(.35,3)
      --(1,4)--(1.55,4.5)--(1.95,6)--(3.45,6)
      --(3.05,4.5)--(2.5,4)--(1.85,3)--(1.4,2)
      --(1.25,1.5)--(1.1,1)--(.75,0)--cycle;
    \draw[->] (-1.4,0)--(4.1,0) node[right,black] {$x$};
    \draw[->] (-1.4,-.2)--(-1.4,7.2) node[above,black] {$m$};
    \fill (0,0) circle (1.7pt);
    \node[below,font=\scriptsize,black] at (0,0) {$0$};
    \draw[guide] (-1.4,6)--(4.1,6);
    \node[left,font=\scriptsize,black] at (-1.4,6) {$2N_j$};
    \draw[guide] (0,0)--(2.7,6);
    \draw[staticpath] (0,0)--(.35,1)--(.5,1.5)--(.65,2)
      --(1.1,3)--(1.75,4)--(2.3,4.5)--(2.7,6);
    \draw[dynamicpath,line width=1.7pt] (.5,1.5)--(1.1,2)
      --(1.75,3)--(2.2,4)--(2.3,4.5);
    \foreach \x/\y in {.5/1.5,2.3/4.5}
      \fill (\x,\y) circle (2.1pt);
    \node[black,anchor=east,font=\footnotesize] at (.35,1.5) {$i'$};
    \node[black,anchor=east,font=\footnotesize] at (2.15,4.65) {$i'+d$};
    \node[pathblue,right,font=\footnotesize] at (2.8,5.4) {$\phi_{j,k}$};
    \draw[<->,black] (1.1,3)--(1.85,3);
    \draw[black,line width=.4pt] (1.88,3)--(2.05,2.6);
    \node[black,anchor=west,font=\footnotesize,fill=white,inner sep=1pt]
      at (2.05,2.6) {$\theta_jN_j^{2/3}$};
    \draw[guide] (2.3,4.5)--(3.8,4.5);
    \draw[guide] (.5,1.5)--(3.8,1.5);
    \draw[<->,black] (3.8,1.5)--node[right,font=\footnotesize] {$d$}(3.8,4.5);
    \draw[<->,black] (1.6,6.65)--node[above,font=\scriptsize]
      {$2H_jN_j^{2/3}$}(3.8,6.65);
    \node[black,anchor=north west,align=left,text width=5cm,font=\footnotesize]
      at (-.6,-1.1) {Row duration: $d\in[N_j/2,N_j]$.\\
      The green tube has half-width $\theta_jN_j^{2/3}$.};
   \end{scope}
\end{tikzpicture}
\caption{The windows used in the dynamical application. Adjacent
windows overlap by $N_j$ rows, so a duration-$N_j$ excursion never
requires a new window tailored to its endpoints.
{In the right panel, translation sends
$(\Gamma^0(kN_j),kN_j)$ to the origin. The orange excursion runs
from row $i'=i-kN_j$ to row $i'+d$, where $d\in[N_j/2,N_j]$.
The green band shows the tube of horizontal half-width
$\theta_jN_j^{2/3}$ around $\phi_{j,k}$; slenderness permits at most
$\chi d$ rows outside this tube. Geometric regularity places the
reference curve and excursion endpoints in the wider strip
$|x-m|\le H_jN_j^{2/3}$. These are the geometric conditions needed
for Proposition~\ref{sl:prop:local}.}}
\label{fig:slenderwindows}
\end{figure}

We have
\(|\mathcal K_j|\le Cn/N_j\le C2^j\).
Every excursion of row duration in \([N_j/2,N_j]\) lies
in one of these windows: if its first row is \(i\), take
\(k=\lfloor i/N_j\rfloor\).

Let \(\mathcal S^t_{j,k}\) be the event that a scale-\(j\)
slender excursion of \(\Gamma^t\) from \(\Gamma^0\)
has its lifetime contained in \(I_{j,k}\). {Every such event
implies \(\mathcal S_n^t\), and the window cover gives the reverse
inclusion. Hence we have the equality}
\begin{equation}\label{sl:eq:windowcover}
 \mathcal S_n^t=\bigcup_{j\in\cI_n}
                      \ \bigcup_{k\in\mathcal K_j}\mathcal S^t_{j,k}.
\end{equation}
This cover replaces a count over all possible pairs of
microscopic endpoint rows by a count of order \(n/N_j\)
windows at scale \(j\).

For a {staircase} \(\pi\), write \(d(\pi)\) for its row duration.
Alongside the actual-excursion event, define the candidate-path event
\begin{equation}\label{sl:eq:Adefinition}
 \mathcal A^t_{j,k}=\left\{
 \begin{array}{l}
 \text{there exists a }{\text{staircase}}\ \pi\text{ with lifetime in }I_{j,k},\\
 d(\pi)\in[N_j/2,N_j],\quad
 \text{endpoints on }\Gamma^0\text{ and interior disjoint from }\Gamma^0,\\
 \#\{r\text{ a row of }\pi:
    |\pi(r)-\Gamma^0(r)|>\theta_jN_j^{2/3}\}\le{\chi d(\pi)},\\
 \W_{N_j}^t(\pi)\ge-c_*\theta_j^{-1}
 \end{array}\right\}.
\end{equation}
The event \(\mathcal S^t_{j,k}\) requires an excursion of the
actual geodesic \(\Gamma^t\), whereas \(\mathcal A^t_{j,k}\) allows
any exterior path with the stated geometry and sufficiently high
weight. Conditional on \(\Gamma^0\), its permitted path family is
fixed; the time-\(t\) noise is used only to test the weights. This
is why the candidate event is suitable for the FKG comparison.

We will show that regularity places each actual slender excursion
in this candidate event, and then bound the probability of the
candidate event after conditioning on \(\Gamma^0\). Geometric
regularity will place the reference curve and endpoints in the
region permitted by Proposition~\ref{sl:prop:local}; weight
regularity will give an actual excursion the required weight.

\paragraph{\textbf{{Step 3}. Geometric and weight regularity.}}
\begingroup
Set $R_*=a^{-\nu/8}$. The geometric and weight regularity bounds
{\eqref{st:eq:georeg} and \eqref{st:eq:weightreg} in
Lemma~\ref{st:lem:regularityinput}} have respective sizes $R_*(1+j)^{1/3}$ and
$R_*^2(1+j)^{2/3}$, so we choose a sufficiently large fixed
constant $C$ and write the allowances as
\begin{equation}\label{sl:eq:allowances}
 H_j=CR_*(1+j)^{1/3},\qquad B_j=CH_j^2.
\end{equation}
{Here $H_j$ denotes the allowance obtained from the present
choice $R_*=a^{-\nu/8}$; it replaces the earlier choice of $H_j$
in the stability argument of Section~\ref{sec:stability}.}
\par\endgroup
The geometric event at scale \(j\) is
\[
 \mathcal G^0_{{\rm geom},j}
 =\left\{
 \begin{array}{l}
 |x-r-(\Gamma^0(kN_j)-kN_j)|\le H_jN_j^{2/3}\\
 \text{for every }k\in\mathcal K_j\text{ and }(x,r)\in\Gamma^0
                              \text{ with }r\in I_{j,k}{\cap\ZZ}
 \end{array}\right\}.
\]
The weight event at scale \(j\) is
\begin{equation}\label{sl:eq:weightlower}
 \mathcal G^t_{{\rm wt},j}
 =\left\{\begin{array}{l}
   \W_{N_j}^t(\pi)\ge-B_j
   \text{ for every subpath }\pi\text{ of }\Gamma^t\\
   {\text{with endpoints in }\RR\times\ZZ}
   \text{ and }d(\pi)\in[N_j/2,N_j]
   \end{array}\right\}.
\end{equation}
Finally, set
\[
 \mathcal G^0_{\rm geom}=\bigcap_{j\in\cI_n}\mathcal G^0_{{\rm geom},j},
 \qquad
 \mathcal G^t_{\rm wt}=\bigcap_{j\in\cI_n}\mathcal G^t_{{\rm wt},j}.
\]
The first event depends only on the reference path \(\Gamma^0\)
and controls all its horizontal row segments, including possible
excursion endpoints. The second concerns weights in the
time-\(t\) environment. We keep them separate because the
geometric event is known after conditioning on \(\Gamma^0\),
whereas the weight event will only be used to show that an
actual excursion belongs to \(\mathcal A^t_{j,k}\). We do not
condition on the weight event when applying FKG in {Step 5}.

{Apply Lemma~\ref{st:lem:regularityinput} with $R=R_*$
and $c_0=1/16$, so its estimates cover row separations in
$[N_j/16,2N_j]$. Its upper bound on $R_*$ holds for sufficiently
large $n$ depending on $a$. Taking the allowances above sufficiently
large, the lemma gives}
\begin{equation}\label{sl:eq:Gprob}
 \PP\bigl((\mathcal G^0_{\rm geom})^c\bigr)
 +\PP\bigl((\mathcal G^t_{\rm wt})^c\bigr)
 \le Ce^{-cR_*^3}.
\end{equation}
\begingroup
For the weight event, divide \eqref{st:eq:weightreg} by
$N_j^{1/3}$ and use that every subpath of $\Gamma^t$
{with endpoints on integer rows} is {maximising} between its endpoints.
The choice $B_j=CH_j^2$ therefore gives the lower bound in
\eqref{sl:eq:weightlower}.
\par\endgroup

For the geometric event, compare {a geodesic point on an integer row in $I_{j,k}$} with a
departure at the farther endpoint row of this interval, and also
compare the two endpoint departures when necessary. The interval
has length between $N_j/2$ and $2N_j$, so these comparisons use
separations in $[N_j/4,2N_j]$. Equation~\eqref{st:eq:georeg}
then gives the claimed allowance $H_j$ for {every geodesic point on an integer row of the window}. The lemma already includes all scales and all point pairs;
no union over microscopic rows is needed here.

\paragraph{\textbf{{Step 4}. Check admissibility in one window.}}
\begingroup
We now check that every path permitted in the definition of
\(\mathcal A^t_{j,k}\) is admissible for
Proposition~\ref{sl:prop:local} on \(\mathcal G^0_{\rm geom}\).
This check concerns only the path geometry.
\begingroup
Fix $j\in\cI_n$ and $k\in\mathcal K_j$. We translate the
window by sending its initial reference point to the origin:
\[
 (x,m)\longmapsto
 \bigl(x-\Gamma^0(kN_j),\ m-kN_j\bigr).
\]
No rescaling is involved. In the translated coordinates, define
the reference function on $[0,2N_j]\cap\mathbb Z$ by
\[
 \phi_{j,k}(m)=
 \begin{cases}
  \Gamma^0(kN_j+m)-\Gamma^0(kN_j),&kN_j+m\le n,\\
  \Gamma^0(n)-\Gamma^0(kN_j)+(kN_j+m-n),&kN_j+m>n.
 \end{cases}
\]
The second line extends the reference function with slope one
if the window reaches beyond row $n$. On $\mathcal G^0_{\rm geom}$,
\[
 |\phi_{j,k}(m)-m|\le H_jN_j^{2/3}
 \quad\text{for every }m\in[0,2N_j]\cap\mathbb Z.
\]
The same strip bound holds for the translated candidate endpoints,
since the original endpoints lie on $\Gamma^0$ in the window.
{For a candidate path $\pi_{\rm old}$ in the original window,
write $\pi$ for its image under the displayed coordinate shift; its
departure coordinates are
$\pi(m)=\pi_{\rm old}(kN_j+m)-\Gamma^0(kN_j)$ on its translated
lifetime.} Its lifetime lies in $[0,2N_j]$, its row duration
is still in $[N_j/2,N_j]$, and at most $\chi d(\pi)$ rows satisfy
\[
 |\pi(m)-\phi_{j,k}(m)|>\theta_jN_j^{2/3}.
\]
The same count holds with the larger threshold $2\theta_jN_j^{2/3}$.
Thus the translated path is admissible with $N=N_j$, $R=H_j$
and $\theta=\theta_j$, provided the parameter restrictions
of Proposition~\ref{sl:prop:local} hold.
\par\endgroup
\par\endgroup

There are two restrictions to check. First,
\begin{equation}\label{sl:eq:Hfit}
 H_j\theta_j^{1/4}
 \le Ca^{\nu/8}(1+j)^{1/3}2^{-j\nu/12}\le1
\end{equation}
for small enough \(a_*\), uniformly in \(j\), because
the factor depending on \(j\) is bounded. This gives
\(H_j\le\theta_j^{-1/4}\), the permitted strip width.
Second,
\begin{equation}\label{sl:eq:finitesize}
 {N_j\theta_j^{300}
 =a^{300\nu}N_j(N_j/n)^{100\nu}
 \ge a^{300\nu}n^{\beta-100\nu(1-\beta)}
 \longrightarrow\infty.}
\end{equation}
The exponent is positive by the temporary
restriction \eqref{sl:eq:nuchoice}.
Hence \({N_j\ge C\theta_j^{-300}}\) simultaneously at all
retained scales when \(n\) is sufficiently large depending
on \(a\). {This lower bound on \(n\) does not
introduce a factor in the probability estimate.} Also
\(\theta_j\le a^\nu\le\theta_*\) for small enough \(a_*\).

We have therefore checked the local proposition with
\(N=N_j\), \(R=H_j\), and \(\theta=\theta_j\).
Notice that the geometric allowance grows only as a power
of \(1+j\), while the allowable strip
\(\theta_j^{-1/4}\) grows exponentially in \(j\).

\paragraph{\textbf{{Step 5}. Bound the candidate event by conditioning on the reference path.}}
We now estimate \(\mathcal A^t_{j,k}\), defined in
\eqref{sl:eq:Adefinition}. {Condition on
\(\Gamma^0\). The permitted path family, the translation in
{Step 4}, and the translated reference function \(\phi_{j,k}\)
are now fixed.} Since the paths have interiors
disjoint from \(\Gamma^0\), increasing exterior Brownian
increments can only increase their weights. Hence
\(\mathcal A^t_{j,k}\) is increasing in those increments.
Lemma~\ref{sl:lem:exteriorinput} bounds its conditional
probability by the probability of the same event in fresh
Brownian noise. {Apply the fixed translation from Step 4
to the candidate paths and this fresh environment. The translated
noise is again a Brownian environment, and the centered weights
are unchanged. On \(\mathcal G^0_{\rm geom}\), Step 4 verifies that
the reference function and candidate endpoints lie in the required
strips, each candidate has row duration in \([N_j/2,N_j]\), and
its departures satisfy the tube condition. Thus every candidate
is admissible for Proposition~\ref{sl:prop:local}, with
\(N=N_j\), \(R=H_j\), and \(\theta=\theta_j\). That proposition
bounds the probability that any admissible path has centered
weight at least \(-c_*\theta_j^{-1}\), and hence bounds the
fresh-environment probability of our candidate event. Together
with the FKG comparison, this gives}
\begin{equation}\label{sl:eq:conditional}
 \ind_{\mathcal G^0_{\rm geom}}
 \PP(\mathcal A^t_{j,k}\mid\Gamma^0)
 \le \ind_{\mathcal G^0_{\rm geom}}Ce^{-c\theta_j^{-{3/2}}}.
\end{equation}
\begingroup
The bound is uniform in the conditioned reference curve.
The translation was fixed before applying the fresh-noise
estimate, so there is no union over its possible values.
\par\endgroup
Since \(\mathcal G^0_{\rm geom}\) is measurable with
respect to \(\Gamma^0\), taking expectations yields
\begin{equation}\label{sl:eq:windowprob}
 \PP(\mathcal G^0_{\rm geom}\cap\mathcal A^t_{j,k})
 \le Ce^{-c\theta_j^{-{3/2}}}.
\end{equation}
We have conditioned only on the reference path. In particular,
\(\mathcal G^t_{\rm wt}\) and the occurrence of an actual
excursion have not entered the conditioning.

\paragraph{\textbf{{Step 6}. An actual slender excursion forces the high-weight event.}}
\begingroup
The strip-width condition checked in \eqref{sl:eq:Hfit} also
makes the weight allowance negligible compared with the slender-path
penalty. Indeed, $B_j=CH_j^2$ and $H_j\le\theta_j^{-1/4}$ give
\[
 B_j\theta_j\le C\theta_j^{1/2}\le Ca^{\nu/2}.
\]
Thus, after decreasing $a_*$ if necessary, we have uniformly in $j$
\begin{equation}\label{sl:eq:contradiction}
 B_j<\frac{c_*}{2}\theta_j^{-1}.
\end{equation}
\par\endgroup

Suppose \(\mathcal S^t_{j,k}\) occurs, witnessed by an
actual excursion \(\pi\). On \(\mathcal G^t_{\rm wt}\),
its weight is at least \(-B_j\), because it is a subpath
of \(\Gamma^t\). By \eqref{sl:eq:contradiction},
\[
 \W_{N_j}^t(\pi)\ge-B_j>-c_*\theta_j^{-1}.
\]
Its remaining properties are precisely those in the
definition of \(\mathcal A^t_{j,k}\). Thus, for every window,
\begin{equation}\label{sl:eq:windowimplication}
 \mathcal S^t_{j,k}\cap\mathcal G^0_{\rm geom}\cap\mathcal G^t_{\rm wt}
 \subseteq\mathcal A^t_{j,k}\cap\mathcal G^0_{\rm geom}.
\end{equation}
This is the weight contradiction: the weight of a geodesic
segment cannot be as negative as the typical penalty for
a slender path. Both weights are evaluated at time \(t\);
no passage-time stability estimate is used in this step.

\paragraph{\textbf{{Step 7}. Summing over windows and scales.}}
Combining \eqref{sl:eq:windowcover} and
\eqref{sl:eq:windowimplication} gives the deterministic inclusion
\begin{equation}\label{sl:eq:finalinclusion}
 \mathcal S_n^t
 \subseteq (\mathcal G^0_{\rm geom})^c
       \cup(\mathcal G^t_{\rm wt})^c
       \cup\bigcup_{j\in\cI_n}\ \bigcup_{k\in\mathcal K_j}
                    (\mathcal G^0_{\rm geom}\cap\mathcal A^t_{j,k}).
\end{equation}
We now take probabilities. By \eqref{sl:eq:windowprob}
and \(|\mathcal K_j|\le C2^j\),
\begin{equation}\label{sl:eq:scalesum}
 \sum_{j\in\cI_n}\sum_{k\in\mathcal K_j}
  \PP(\mathcal G^0_{\rm geom}\cap\mathcal A^t_{j,k})
 \le\sum_{j\in\cI_n}C2^j e^{-c\theta_j^{-{3/2}}}
 \le Ce^{-c'a^{-{3\nu/2}}}.
\end{equation}
Indeed, \(\theta_j^{-{3/2}}\ge a^{-{3\nu/2}}2^{{j\nu/2}}\).
The positive term \(j\log2\) from the window count is
dominated by this exponential growth in \(j\). More explicitly,
for \(a\) small enough it can be absorbed into half of the
negative exponent, and
\[
 \sum_{j\ge0}\exp\{-c a^{-{3\nu/2}}2^{{j\nu/2}}/2\}
 \le Ce^{-c'a^{-{3\nu/2}}}.
\]
Here we have extended the finite sum to all \(j\ge0\),
so the bound does not acquire a factor from the number
of retained scales.

Finally, \eqref{sl:eq:Gprob}, \eqref{sl:eq:finalinclusion}
and \eqref{sl:eq:scalesum} imply
\[
 \PP(\mathcal S_n^t)
 \le Ce^{-cR_*^3}+Ce^{-c'a^{-{3\nu/2}}}
 \le Ce^{-c''a^{-3\nu/8}},
\]
as \(R_*=a^{-\nu/8}\). {The checks of the
scale and parameter ranges in {Steps 3 and 4} explain why
\(n\) must be sufficiently large depending on \(a\).} The only location count left in the probability
estimate was \(n/N_j\), and it was summed in
\eqref{sl:eq:scalesum}. No independence between windows,
scales, or the two regularity events is used.

\end{proof}

\par\endgroup
\appendix
\section{Parameter conditions and auxiliary results}\label{app:parameters}

We collect the parameter conditions and the auxiliary estimates
used in the proof, including the conventions needed to apply them.

\subsection{A guide to the notation}
The scale parameters are chosen for the full argument and
then used consistently in its two geometric parts. The
following table records the principal objects; the cutoff
\(n^\beta\) and the tube exponent \(\nu\)
have different roles and may be chosen independently within
the ranges in \eqref{eq:parameters}.

\begin{center}
\small
\renewcommand{\arraystretch}{1.25}
\begin{tabular}{@{}p{.23\textwidth}p{.70\textwidth}@{}}
\toprule
Notation & Meaning and first use\\
\midrule
\(n,t,a\) & Original path scale, physical dynamical time,
and its critical-scale coefficient: \(t=a n^{-1/3}\).\\
\(O_n(t)\) & Horizontal length shared by \(\Gamma^0\)
and \(\Gamma^t\); \eqref{st:eq:overlap}.\\
\(D_{\rm long}\) & Sum of row durations of excursions
lasting at least \(n^\beta\) rows; \eqref{st:eq:longtarget}.\\
\(N_j,\cI_n\) & Dyadic row-duration scales and the set
of scales above the short-excursion cutoff; \eqref{st:eq:scales}.\\
\(\tau_j\) & Effective dynamical time at scale \(N_j\):
\(a(N_j/n)^{1/3}\); \eqref{st:eq:tau}.\\
\(A_j\) & Uniform local passage-time tolerance
\(C\tau_j^{1/4}N_j^{1/3}\); \eqref{st:eq:localstability}.\\
\(\theta_j,w_j\) & Tube width in local KPZ units and
original horizontal units:
\(\theta_j=\tau_j^\nu\), \(w_j=\theta_jN_j^{2/3}\).\\
\(Z_m,\cP_m(A,w)\) & Static routed profile and its
separated twin-peak event; \eqref{wd:eq:peak}.\\
\(\cE_{n,a}\) & One stability event for every possible
subsegment at every retained scale;
Proposition~\ref{st:prop:local}.\\
\(N\kappa^{3/2}\) &
Row spacing of the local sampling grid;
\eqref{sl:eq:gridspacing}.\\
\(\cH_r,\cU\) &
Fixed-offset high-weight event and event
controlling discarded end pieces; \eqref{sl:eq:Hrdefinition}
and \eqref{sl:eq:U}.\\
\(\mathcal S^t_{j,k},\mathcal A^t_{j,k}\) &
Actual slender-excursion event and the event
that an exterior candidate path has sufficiently high weight
in row window \(I_{j,k}\); Section~\ref{sec:slenderapplication}.
Conditional on \(\Gamma^0\), the candidate path family is fixed.\\
\(\xi,\cB_{n,\xi}\) & Bulk fraction and retained bulk
rows in the final count; Section~\ref{sec:assembly}.\\
\bottomrule
\end{tabular}
\end{center}

\subsection{Parameters and limits}
\begingroup
The wide-excursion calculation requires \(0<\nu<1/2\),
because the exponent in \eqref{wd:eq:substitution} must be
positive. Proposition~\ref{sl:prop:dynamic} holds for every
fixed \(\nu>0\). {The initial wider-tube reduction allows
its proof to assume \eqref{sl:eq:nuchoice}, which is used to check
\eqref{sl:eq:finitesize}. The reduction uses a smaller exponent
and a correspondingly smaller width parameter, preserving the
probability bound.} The short-excursion reduction {in Lemma~\ref{lem:fullreduction}} requires
\(\beta<1/12\). With this range its auxiliary exponent can be
chosen as \(\lambda=1/12\), so that \(\beta<\lambda\) and
\(\beta+\lambda<1/6\).
\par\endgroup
Each lower bound on \(n\) may depend on the fixed \(a\).
The bulk parameter \(\xi\) is held fixed through the limits
\(n\to\infty\) and \(a\downarrow0\), and is sent to zero
only afterward.

The regularity parameters in the two arguments serve different
purposes. The stability proof uses
\(R\asymp a^{-1/16}\), so the regularity failure
\(Ce^{-cR^3}\) has the same stretched-exponential order
as the endpoint-comparison error in \eqref{st:eq:scaleunion}. The slender proof uses
\(R_*=a^{-\nu/8}\), which fits its local strip
\(\theta_j^{-1/4}\) and still makes the weight lower bound
larger than \(-c_*\theta_j^{-1}\).
In the wider-tube reduction,
{\(a_0^{-\nu_0/8}=a^{-\nu/8}\)}, so the same regularity allowance
is retained. Neither parameter grows with \(n\) when \(a\) is fixed.

\subsection{Random endpoints and conditioning}
The stability endpoint families are deterministic, and their estimates are
proved before any endpoints are selected from the geodesics.
The box count and the compact-set constants are explicit in
\eqref{st:eq:boxcount} and the ensuing coordinate calculation.
The three-value identity {\eqref{st:eq:threevalues}} uses the row-disjoint routed profile
with its upper leg starting at \((x,m+1)\).
{This is also the routed-profile convention in
the imported parametrised estimate; the identification is made
in the proof of Proposition~\ref{wd:prop:static}.}

For the slender estimate, the reference path is fixed by the
conditioning in Lemma~\ref{sl:lem:exteriorinput}. Only its geometric regularity is used in choosing
the local coordinate system. {The candidate event
\(\mathcal A^t_{j,k}\) in \eqref{sl:eq:Adefinition} is increasing in
exterior increments, so the FKG comparison applies. The weight event
\(\mathcal G^t_{\rm wt}\), defined using \eqref{sl:eq:weightlower},
provides lower bounds for actual geodesic segments. The event
\(\cU\) in \eqref{sl:eq:U} controls the weights of discarded end
pieces in the deterministic-path estimate. Neither event, nor the
occurrence of an excursion, is included in the conditioning.}

\subsection{Auxiliary results}
\begin{center}
\small
\renewcommand{\arraystretch}{1.2}
\begin{tabular}{@{}p{.34\textwidth}p{.59\textwidth}@{}}
\toprule
Input & Source and use\\
\midrule
OU passage-time increments &
{\cite[Theorem 5]{Bha26}; imported as
Lemma~\ref{lem:rotation}, with the scaling argument in
Section~\ref{sec:outightness}.}\\
Tightness of the OU dynamics &
{Proved in Section~\ref{sec:outightness}, using
\cite[Theorem 5]{Bha26} and static endpoint regularity.}\\
Static endpoint increments &
Lemma~\ref{st:lem:endpointinput}, from
{\cite[Proposition 2.6, Lemma 11.2 and the proof of Theorem 11.1]{DOV22}}.\\
Static geodesic and weight regularity &
Lemmas~\ref{st:lem:regularityinput} and
\ref{sl:lem:compactweightinput}, from
{\cite[Theorems 1.4 and 1.6, Corollary 1.5 and Proposition 1.8]{GH23}}.\\
{Parametrised static near peaks} &
{\cite[Proposition 82]{Bha25}, with exponent
parameter $1/2$; imported in Proposition~\ref{wd:prop:static}
and applied in Corollary~\ref{wd:cor:sum}.}\\
Prescribed sparse constraints &
{Lemma~\ref{sl:lem:sparseinput}, adapted from
{\cite[Proposition 4.5]{GH23}}.}\\
{FKG comparison} &
Lemma~\ref{sl:lem:exteriorinput}, from
{\cite[Lemma 9.6 and its proof]{GH24}};
applied conditionally on the time-zero geodesic.\\
Short excursions and horizontal overlap &
{Lemma~\ref{lem:fullreduction}, using
{\cite[Proposition 8.4 and its proof]{GH24}},
with arbitrary fixed thresholds and a conversion
to horizontal length.}\\
\bottomrule
\end{tabular}
\end{center}

{The required ranges of scales, parameters and
endpoint coordinates are checked at each application.}

\begingroup
\printbibliography
\par\endgroup
\end{document}